\newcommand{\Frac}[2]{\genfrac{}{}{}{0}{#1}{#2}}
\def\eps{\varepsilon}
\def\R{\mathbb{R}}
\def\N{\mathbb{N}}
\def\Z{\mathbb{Z}}
\def\ad{\mathrm{ad}}
\def\diam{\mathop{\mathrm{diam}}}
\def\ss{\mathcal{S}}
\newcommand{\hh}{{\mathcal H}}
\newtheorem{theorem}{Theorem}[section]
\newtheorem{lemma}[theorem]{Lemma}
\newtheorem{proposition}[theorem]{Proposition}
\newtheorem{corollary}[theorem]{Corollary}
\newtheorem*{theorem*}{Theorem}
\newtheorem*{lemma*}{Lemma}
\theoremstyle{definition}
\newtheorem{definition}{Definition}[section]
\theoremstyle{remark}
\newtheorem{example}{Example}[section]
\newtheorem{remark}{Remark}[section]
\begin{document}

\title{Control of
  Nonholonomic Systems and Sub-Riemannian Geometry}

\author{Fr\'{e}d\'{e}ric \textsc{Jean}%
\thanks{ENSTA ParisTech, UMA, 828 Boulevard des Mar\'{e}chaux
91762 Palaiseau, France  and
  Team  GECO, INRIA Saclay --
\^{I}le-de-France,  {\tt frederic.jean@ensta-paristech.fr}}
\thanks{This work was supported
by the ANR project {\it GCM}, program ``Blanche'',
project number NT09\_504490, and by the Commission
of the European Communities under the 7th Framework Programme Marie
Curie Initial Training Network (FP7-PEOPLE-2010-ITN), project SADCO,
contract number 264735.}}

\date{Lectures given at the
CIMPA School ``G\'{e}om\'{e}trie sous-riemannienne'', Beirut, Lebanon, 2012}

\maketitle


\tableofcontents

\newpage

Nonholonomic systems are control systems which depend linearly on the
control. Their underlying geometry is the sub-Riemannian geometry,
which plays for these systems the same role as Euclidean geometry does
for linear systems. In particular the usual notions of approximations
at the first order, that are essential for control purposes, have to
be defined in terms of this geometry. The aim of these notes is to
present these notions of approximation and their link with the metric
tangent structure in sub-Riemannian geometry.

The notes are organized as follows. In Section~\ref{se:geomnhs} we
introduce the basic definitions on nonholonomic systems and
sub-Riemannian geometry. Section~\ref{se:contro} is devoted to the
study of the controllability of nonholonomic systems, and to the
topological properties of sub-Riemannian
distances. Section~\ref{se:1storder} provides a detailed exposition of
the notions of first-order approximation, including nonholonomic
orders, privileged coordinates, nilpotent approximations, and distance
estimates such as the Ball-Box Theorem. We then see in
Section~\ref{se:tgtstruct} how these notions allow us to describe the
tangent structure to a Carnot-Carath\'{e}odory space (the metric space
defined by a sub-Riemannian distance). Finally, we present in the
appendix some results on flows in connection with the Hausdorff
formula (Section~\ref{se:flowsvf}), and some proofs on privileged
coordinates (Section~\ref{se:proofpriv}).

\section{Geometry of nonholonomic systems}
\label{se:geomnhs}

Throughout these notes we work in a  smooth $n$-dimensional manifold
$M$. However most of our considerations are local, so $M$ can also be
thought of as an open subset of $\R^n$.

\subsection{Nonholonomic systems}
\label{se:snh}

A  \emph{nonholonomic system} on $M$ is a
control system which is of the form
\begin{equation}
\tag{$\Sigma$}
\label{eq:snh}
\dot{q}=\sum_{i=1}^{m}u_iX_i(q),\  q\in M,\ \
u=(u_1,\dots,u_m)\in\mathbb{R}^m,
\end{equation}
where $X_1,\dots,X_m$ are $C^\infty$ vector fields on $M$. To give a
meaning to such a control system, we have to define what are its
solutions, that is, its trajectories.

\begin{definition}
A \emph{trajectory
  of}~\eqref{eq:snh} is a path $\gamma: [0,T] \to M$ for which there exists a function $u(\cdot) \in
L^1([0,T],\R^m)$
such that $\gamma$ is a solution
of the ordinary differential equation:
$$
\dot{q}(t)=\sum_{i=1}^{m}u_i (t) X_i (q(t)), \quad \hbox{for a.e.\ } t
\in [0,T].
$$
Such a function $u(\cdot)$ is called a \emph{control} associated with
$\gamma$.
\end{definition}
Equivalently, a trajectory is an absolutely continuous path $\gamma$
on $M$ such that
$\dot \gamma (t) \in  \Delta (\gamma (t))$ for almost every $t\in
[0,T]$, where we have set, for every $q\in M$,
\begin{equation}
  \label{eq:deltap}
  \Delta (q) = \mathrm{span}
\left\{X_1(q),\dots,X_m(q)\right\} \subset T_qM.
\end{equation}
Note that the rank of the vector spaces $\Delta(q)$ is a function of
$q$, which may be non constant. If it is constant, $\Delta$ defines a
distribution on $M$, that is, a subbundle of $TM$.

\begin{example}[unicycle]
\label{ex:unicycle}
The most typical example of nonholonomic system is the simplified
kinematic model of a unicycle. In this model, a configuration
$q=(x,y,\theta)$ of the unicycle is described by the planar
coordinates $(x,y)$ of the contact point of the wheel with the ground,
and by the angle $\theta$ of orientation of the wheel with respect to
the $x$-axis. The space of configurations is then the manifold $\R^2
\times S^1$.

The wheel is subject to the constraint of rolling without slipping,
which writes as $\dot x \sin \theta - \dot y \cos \theta =0$, or,
equivalently as $\dot q \in \ker \omega(q)$, where $\omega$ is the
one-form  $\sin \theta dx - \cos \theta dy$. Hence the set $\Delta$
of~\eqref{eq:deltap} is $\ker \omega$.

Choosing as controls the tangential velocity $u_1$ and the angular velocity $u_2$, we obtain the  nonholonomic system $\dot q = u_1 X_1(q) + u_2 X_2(q)$ on $\R^2 \times S^1$, where $X_1=\cos \theta \partial_x + \sin \theta \partial_y$, and $X_2= \partial_\theta$.
\end{example}

Let us mention here  a few properties of the trajectories of~\eqref{eq:snh}
(for more details, see~\cite{rif12}).
\begin{itemize}
\item
Fix $p \in M$ and $T>0$. For every control $u(\cdot) \in
L^1([0,T],\R^m)$, there exists $\tau \in (0,T]$ such that the Cauchy
problem
\begin{equation}
  \label{eq:cauchpb}
  \left\{
    \begin{array}[c]{l}
      \dot{q}(t)=\sum_{i=1}^{m}u_i (t) X_i (q(t)) \quad \hbox{for a.e.\ }t \in
      [0,\tau], \\
q(0) =p,
    \end{array}
\right.
\end{equation}
has a unique solution denoted by $\gamma_u$ or $\gamma(\cdot ; p,
u)$. It is called the \emph{trajectory issued from $p$ associated with $u$}.

  \item If the rank of $X_1,\dots,X_m$ is constant and equal to $m$ on
    $M$, every trajectory is associated with a unique
    control. Otherwise different controls can be associated with the
    same trajectory. In this case it will sometimes be useful to
    consider among these controls  only the ones which minimize the
    $L^1$ norm $\int \|u(t)\|dt$. By convexity, this defines a unique
    control with which the trajectory is associated.

\item Any time-reparameterization of a trajectory is still a
  trajectory: if $\gamma: [0,T] \to M$ is a trajectory associated
  with a control $u$, and $\alpha:
  [0,S] \to [0,T]$ is a $C^1$-diffeomorphism, then $\gamma \circ
  \alpha : [0,S] \to M$ is a trajectory associated with the control
  $\alpha'(s) u \left( \alpha(s)\right)$. In particular, one can
  reverse time along $\gamma$: the resulting path  $\gamma(T-s)$,
  $s\in [0,T]$, is a   trajectory associated with the control  $-u(T-s)$.

\end{itemize}

In this context, the first question is the one of the
controllability: can we join any two points by a trajectory? This suggests to introduce the following definition.
\begin{definition}
\label{de:Ap}
The \emph{attainable set from $p \in M$} is
defined to be the set $\mathcal{A}_p$ of points
attained by a trajectory of~\eqref{eq:snh} issued from $p$.
\end{definition}

The question above then becomes: is the attainable set from any point equal to the
whole manifold $M$? We will answer this question
in Section~\ref{se:contro}.

In the case where the answer is positive, next issues are notably the
motion planning (i.e.\ find a trajectory joining two given points) and
the stabilization (i.e.\ design the control as a function $u(q)$ of the state
 in such a way that the resulting differential equation is
stable).
The usual way to deal with these problems is to use a first-order
approximation  of
the system. The underlying idea is the following. Consider a nonlinear
control system
in $\R^n$,
$$
\dot x = f(x,u), \quad  x \in \R^n, \ u \in \R^m,
$$
and a pair $(\bar x,\bar u) \in \R^{m+n}$ such that $f(\bar x,\bar
u)=0$. The linearized system around this equilibrium pair is defined to be
the linear control system:
$$
\dot {\delta x} = \frac{\partial f}{\partial x}(\bar x,\bar u) \delta
x + \frac{\partial f}{\partial u}(\bar x,\bar u) \delta u, \quad
\delta x \in \R^n, \ \delta u \in \R^m.
$$
If this linearized system is controllable, so is the nonlinear one
near $\bar x$. In this case the solutions to the motion planning
and stabilization problems for the linearized system may be used to
construct solutions of the corresponding problems for the nonlinear
system (see for instance~\cite{kha95}). Thus, locally, the study of
the control system amounts to the one of the linearized system.

Does this strategy apply to nonholonomic systems? Consider a
nonholonomic system~\eqref{eq:snh} defined on an open subset
$M$ of $\R^n$. For every $\bar q \in M$, the pair $(\bar q, 0)$ is an
equilibrium pair and the corresponding linearized system is
$$
\dot {\delta x} = \sum_{i=1}^{m} \delta u_i X_i(\bar q), \quad
\delta x \in \R^n, \ \delta u \in \R^m.
$$
For this linearized system, the attainable set from a point $\delta q$
is obviously the affine subset
$$
\delta q + \Delta (\bar q) = \delta q + \mathrm{span}
\left\{X_1(\bar q),\dots,X_m(\bar q)\right\}.
$$
Thus, except in the very special case where $\mathrm{rank} \Delta (\bar q)
=n$, the linearized system is not controllable and the strategy above
does not apply, whereas nonholonomic systems may be
controllable (and generically they are), as we will
see Section~\ref{se:contro}.

This may be explained as follows. The linearization is a first-order
approximation with respect to a Euclidean (or a Riemannian)
distance. However for nonholonomic systems the underlying distance
is a sub-Riemannian one and it behaves very differently from a Euclidean
one. Thus, the local behaviour should be
understood through the study of a first-order approximation with
respect to this sub-Riemannian distance, not through the linearized
system.

We will introduce now the sub-Riemannian distances. In
Section~\ref{se:1storder}  we will see how to construct first-order
approximations with
respect to this kind of distances, and how to use them for motion
planning for instance.


\subsection{Sub-Riemannian distance}
\label{se:srdistance}

A nonholonomic system induces a distance on $M$ in the following way.
We first define the {\em sub-Riemannian metric} associated with~\eqref{eq:snh}
to be the function $g : TM \to \overline{\R}$ given by
\begin{equation}
  \label{eq:srmetric}
  g(q,v) = \inf \left\{ u_1^2+ \cdots + u_m^2 \ : \
    \sum_{i=1}^{m}u_iX_i(q)=v \right\},
\end{equation}
for $q\in M$ and $v \in T_qM$, where we adopt the convention that
$\inf \emptyset = + \infty$.
This function  $g$ is  smooth and satisfies:
\begin{itemize}
\item $g(q,v)= +\infty$ if $v \not\in \Delta (q)$,
\item $g$ restricted to $\Delta (q)$ is a positive definite
  quadratic form.
\end{itemize}

Such a metric allows to define a distance in the same way as in
Riemannian geometry.

\begin{definition}
\label{de:srdist}
The \emph{length} of an absolutely continuous
path $\gamma (t)$, $t \in [0,T]$, is
$$
{\mathrm{length}} (\gamma) = \int_0^T \sqrt{g \left(\gamma (t), \dot \gamma
(t)\right)} dt,
$$
and the {\em sub-Riemannian distance} on $M$  associated
with the nonholonomic
system~\eqref{eq:snh} is defined by
$$
d(p,q) = \inf {\mathrm{length}} (\gamma),
$$
where the infimum is taken over all  absolutely continuous paths
$\gamma$
joining $p$ to $q$.
\end{definition}
Note that only trajectories of~\eqref{eq:snh} may have a finite
length. In particular, if no trajectory
joins $p$ to $q$, then $d(p,q)=+\infty$. We will see below in
Corollary~\ref{le:d=dist} that, under an extra assumption on the
nonholonomic system, $d$ is actually a
distance function.

\begin{remark}
When $\gamma$ is a trajectory, its length is also equal to
$$
\min \int_0^T \|u(t)\| dt,
$$
the minimum being taken over all control $u(\cdot)$ associated with
$\gamma$. As already noticed, this minimum is attained at a unique
control which could be defined  as \emph{the} control associated with
$\gamma$.
\end{remark}

An important feature of the length of a path is that it is
independent of the parametrization of the path. As a consequence, the
sub-Riemannian distance $d(p,q)$ may also be understood as the
minimal time needed for the nonholonomic system to go from $p$ to $q$
with bounded controls, that is,
\begin{equation}
  \label{eq:tpsmin}
 d(p,q) = \inf \left\{
T\geq 0 \ : \
\begin{array}[c]{c}
  \exists \hbox{ a trajectory } \gamma_u: [0,T] \to M \hbox{ s.t.}\\
\gamma_u(0) = p, \ \gamma_u(T)=q,\\
\hbox{ and } \|u(t)\| \leq 1 \hbox{ for a.e.\ } t \in [0,T]
\end{array}
\right\} .
\end{equation}
This formulation justifies the assertion made in Section~\ref{se:snh}:
for nonholonomic systems, first-order approximations with respect to
the time should be understood as first-order approximations with
respect to the sub-Riemannian distance.

Another consequence of~\eqref{eq:tpsmin} is that $d(p,q)$ is the
solution of a time-optimal control problem. It then results from
standard existence theorems (see for instance~\cite{Lee1967} or~\cite{rif12}) that,
when $p$ and $q$
are sufficiently close  and $d(p,q) <\infty$, there exists  a
trajectory $\gamma$ joining $p$ to $q$ such that
$$
{\mathrm{length}} (\gamma) = d(p,q) .
$$
Such a trajectory is called a \emph{minimizing
trajectory}.

\begin{remark}
\label{re:velocity1}
Any reparameterization of a minimizing trajectory is also
minimizing. Therefore any pair of close enough points can be joined by
a minimizing trajectory \emph{of velocity one}, that is, a trajectory
$\gamma$ such that $g(\gamma(t),\dot \gamma(t)) = 1$ for a.e.\ $t$. As
a consequence, there exists a control $u(\cdot)$
associated with $\gamma$ such that $\|u(t)\|=1$ a.e. Every sub-arc of
such a trajectory $\gamma$ is also clearly minimizing, hence the equality
$d(p,\gamma(t)) = t$  holds along $\gamma$.
\end{remark}

\subsection{Sub-Riemannian manifolds}

The distance $d$ defined in Section~\ref{se:srdistance} does not always
meet the classical notion of
sub-Riemannian distance arising from a sub-Riemannian manifold. Let us
recall the latter definition.

A {\em sub-Riemannian manifold} $(M,D,g_R)$ is a smooth manifold
$M$ endowed with  a {\em sub-Riemannian
  structure}  $(D,g_R)$, where:
\begin{itemize}
\item $D$ is a distribution on $M$, that is a subbundle of $TM$;
\item  $g_R$ is a Riemannian metric on $D$, that is a smooth
  function $ g_R : D \rightarrow \R$ whose restrictions  to
  $D(q)$ are positive definite quadratic forms.
\end{itemize}
The {\em sub-Riemannian metric} associated with $(D,g_R)$ is the
function $g_{SR} : TM \to \overline{\R}$ given by
\begin{equation}
  \label{eq:srmetric2}
  g_{SR}(q,v) =
\left\{
  \begin{array}[c]{ll}
    g_R(q,v) & \hbox{if } v\in D(q), \\
+ \infty & \hbox{otherwise}.
  \end{array} \right.
\end{equation}
The  {\em sub-Riemannian distance} $d_{SR}$ on $M$ is then defined
from the metric $g_{SR}$ as $d$ is defined from the metric $g$ in Section~\ref{se:srdistance}.

What is the difference between the two constructions, that is, between
the definitions~\eqref{eq:srmetric} and~\eqref{eq:srmetric2} of a
sub-Riemannian metric?

Consider a {\em sub-Riemannian
  structure}  $(D,g_R)$. Locally, on some open subset $U$, there exist
vector fields $X_1, \dots, X_m$ whose values at each point $q \in U$
form an orthonormal
basis of $D(q)$ for the quadratic form $g_R$; the metric $g_{SR}$
associated with $(D,g_R)$ then coincides with the metric $g$ associated
with $X_1, \dots, X_m$.
Thus, locally, there is a one-to-one correspondence between
sub-Riemannian structures and nonholonomic systems for which the rank
of $\Delta (q) = \mathrm{span}
\left\{X_1(q),\dots,X_m(q)\right\}$  is constant.

However  this correspondence does not hold
globally since, for topological reasons, a distribution of rank
$m$ may not always be generated by $m$ vector fields on the whole $M$.
Conversely, the vector fields $X_1,\dots,X_m$ of a nonholonomic system
do not always generate a linear space $\Delta(q)$ of constant rank
equal to $m$. It
may even be impossible, again for topological reasons (for instance,
on an even dimensional sphere).

A way to conciliate both notions is to generalize the definition of
sub-Riemannian structure.

\begin{definition}
A {\em generalized sub-Riemannian structure} on $M$ is a triple $(E, \sigma,
g_R)$ where
\begin{itemize}
  \item $E$ is a vector bundle over $M$;
  \item $\sigma : E \rightarrow TM$ is a morphism of vector
  bundles;
  \item $g_R$ is a Riemannian metric on $E$.
\end{itemize}
\end{definition}

With a generalized sub-Riemannian structure a metric is associated
which is defined by
$$
g_{SR}(q,v) = \inf \{g(q,u)  \ : \ u \in E(q), \ \sigma(u) = v \},
\quad \hbox{for } q \in M, \ v \in T_qM.
$$
The generalized sub-Riemannian
distance $d_{SR}$ on $M$ is then defined
from this metric $g_{SR}$ as $d$ is defined from the metric $g$.

This definition of sub-Riemannian distance actually contains the two
notions of
distance we have introduced before.
\begin{itemize}
  \item Take $E = M \times \R^m$, $\sigma : E \rightarrow TM$,
  $\sigma (q,u)= \sum_{i=1}^m u_i X_i (q)$ and $g_R$ the Euclidean
  metric on $\R^m$. The resulting generalized sub-Riemannian distance
  is the distance associated with the nonholonomic
  system~\eqref{eq:snh}.

  \item Take $E=D$, where $D$ is a distribution on $M$, $\sigma : D
    \hookrightarrow TM$ the inclusion,
  and $g_R$ a Riemannian metric on $D$. We recover the
  distance   associated with the sub-Riemannian structure $(D,g_R)$.
\end{itemize}

Locally, a generalized sub-Riemannian structure
can always be defined  by a single finite family $X_1, \dots, X_m$ of
vector fields, and so by a nonholonomic system (without rank
condition). It actually appears that this is also true globally
(see~\cite{Agrachev2011a}, or~\cite{Drager2012} for the fact that a
submodule of $TM$ is finitely generated): any generalized
sub-Riemannian distance may be associated with a nonholonomic system.

In these notes, we will always consider a sub-Riemannian distance $d$
associated with a nonholonomic system. However, as we just noticed, all
the results actually hold
for a generalized sub-Riemannian distance.


\section{Controllability}
\label{se:contro}

Consider a nonholonomic system
\begin{equation}
\tag{$\Sigma$}
\dot{q}=\sum_{i=1}^{m}u_iX_i(q),
\end{equation}
on a smooth $n$-dimensional manifold $M$. This section is concerned with  the
question of \emph{controllability}: is the attainable set
$\mathcal{A}_p$ from any point
$p$ equal to the whole manifold $M$? We will see next the implications
on the sub-Riemannian distance $d$ and on the topology of the metric
space $(M,d)$.

\subsection{The Chow-Rashevsky Theorem}
\label{se:chow}
The controllability of~\eqref{eq:snh} is mainly characterized by the
properties of the Lie algebra generated by $X_1, \dots, X_m$. We first
introduce notions and definitions on this subject.

Let $VF(M)$ denote the set of smooth vector fields on $M$. We define
$\Delta^1$ to be the linear subspace of $VF(M)$ generated by
$X_1, \dots, X_m$,
$$
\Delta^1 = \mathrm{span} \{ X_1, \dots, X_m\}.
$$
For $s\geq 1$, define  $\Delta^{s+1}= \Delta^s + [\Delta^1,\Delta^s]$,
where we have set $[\Delta^1,\Delta^s]=
\mathrm{span} \{ [X,Y] \ : \ X \in \Delta^1, \ Y \in \Delta^s \}$. The
\emph{Lie algebra generated by} $X_1, \dots, X_m$ is defined to be
$\mathit{Lie} (X_1, \dots, X_m)= \bigcup_{s\geq 1} \Delta^s$. Due to the
Jacobi identity, $\mathit{Lie} (X_1, \dots, X_m)$ is the smallest
linear subspace
of $VF(M)$ which both contains $X_1,\dots, X_m$ and is invariant by Lie
brackets.

Let us denote by $I=i_1 \cdots i_k$  a multi-index of $\{1,\dots,m\}$, and
by $|I|=k$ the
length of $I$. We set
$$
X_I=[X_{i_1},[ \dots, [X_{i_{k-1}},X_{i_k}] \dots].
$$
With these notations, $\Delta^s = \mathrm{span} \{ X_I \ : \ |I| \leq
s \}$.

For $q\in M$, we set  $\mathit{Lie} (X_1, \dots, X_m)(q) =  \{ X(q)
: \, X \in \mathit{Lie} (X_1, \dots, X_m)  \}$, and, for $s\geq 1$,
$\Delta^s (q) =  \{ X(q) \ : \ X \in \Delta^s  \}$. By definition
these sets are linear
subspaces of $T_qM$.

\begin{definition}
We say that~\eqref{eq:snh} (or the vector fields $X_1,\dots,X_m$)
satisfies {\em Chow's
  Condition} if
$$
\mathit{Lie} (X_1, \dots, X_m)(q)=T_qM, \qquad \forall q
\in M.
$$
\end{definition}
\noindent Equivalently, for any $q \in M$, there exists an integer
$r=r(q)$ such that $\dim \Delta^r (q)=n$.

This property is also known as the Lie algebra
rank condition (LARC), and as the H\"{o}rmander
condition (in the context of PDE).

\begin{lemma}
\label{le:Apopen}
If \eqref{eq:snh}
satisfies Chow's Condition, then for every $p\in M$, the set
$\mathcal{A}_p$ is a
neighbourhood of $p$.
\end{lemma}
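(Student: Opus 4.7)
The strategy is to realize $n$ linearly independent displacements from $p$ using trajectories of~\eqref{eq:snh}. Taking the constant control $u_i\equiv\pm 1$ (others zero) shows that the flow $e^{\pm tX_i}$ sends $\mathcal{A}_p$ into itself for small $t$, and piecewise-constant controls give arbitrary finite concatenations of such flows. The plan is therefore to build, from such concatenations, a continuous map $\phi:\R^n\to M$ with $\phi(0)=p$, $\phi(\R^n)\subset\mathcal{A}_p$, and whose image covers a neighborhood of $p$.

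By Chow's Condition I choose iterated brackets $Y_j=X_{I_j}$, $j=1,\dots,n$, such that $Y_1(p),\dots,Y_n(p)$ form a basis of $T_pM$; set $k_j=|I_j|$. Starting from the classical commutator identity
$$
e^{-tY}\circ e^{-tX}\circ e^{tY}\circ e^{tX}(q)=q+t^{2}[X,Y](q)+O(t^{3}),
$$
and iterating by induction on the bracket length, one associates with each $Y_j$ a concatenation $\Phi_{Y_j}^{t}$ of flows of $\pm X_i$ (with elementary durations of order $t$) satisfying, in any local chart,
$$
\Phi_{Y_j}^{t}(q)=q+t^{k_j}Y_j(q)+o(t^{k_j}).
$$
Swapping the two innermost fields in the nested commutator negates the bracket and so yields a companion concatenation realizing the displacement $-t^{k_j}Y_j(q)+o(t^{k_j})$. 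Reparameterizing by $\tau=\pm t^{k_j}$, I obtain a family $\Phi_{Y_j}^{\tau}$, continuous in $\tau$ in a neighborhood of $0$, with $\Phi_{Y_j}^{\tau}(q)\in\mathcal{A}_q$ and
$$
\Phi_{Y_j}^{\tau}(q)=q+\tau\,Y_j(q)+o(\tau).
$$

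Now define the endpoint map
$$
\phi(\tau_1,\dots,\tau_n)=\Phi_{Y_n}^{\tau_n}\circ\cdots\circ\Phi_{Y_1}^{\tau_1}(p).
$$
Then $\phi$ is continuous on a neighborhood of $0$, takes values in $\mathcal{A}_p$, and in any local chart centered at $p$ admits the expansion
$$
\phi(\tau)=p+\sum_{j=1}^{n}\tau_j Y_j(p)+o(|\tau|).
$$
Because $L\tau:=\sum_j\tau_j Y_j(p)$ is a linear isomorphism $\R^n\to T_pM$, a Brouwer fixed-point argument applied to $\tau\mapsto\tau-L^{-1}(\phi(\tau)-y)$ shows that every $y$ sufficiently close to $p$ equals $\phi(\tau)$ for some small $\tau$. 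Hence $\mathcal{A}_p$ contains a neighborhood of $p$.

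The main obstacle is the second step: the inductive Baker--Campbell--Hausdorff-type construction of the $\Phi_{Y_j}$ and the verification that the remainder is genuinely $o(t^{k_j})$ uniformly in $q$. The concatenations quickly involve many factors, and the error estimates must be carefully propagated through each step of the induction on $|I_j|$ and then through the $n$-fold composition defining $\phi$. Once this is in place, the final topological openness step via Brouwer is standard, since $\phi$ is merely continuous (not a local diffeomorphism) because of the vanishing of $\partial_{\tau_j}\phi(0)$ when $k_j\ge 2$.
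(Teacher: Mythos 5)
Your construction is the same as the paper's: build the commutator flows $\phi^I_t=[\phi^i_t,\phi^J_t]$ with $\phi^I_t=\mathrm{id}+t^{|I|}X_I+o(t^{|I|})$ (this is Proposition~\ref{le:quasich} in the appendix), reparameterize by $\tau=\pm t^{|I|}$ to get maps $\mathrm{id}+\tau X_I+o(\tau)$ (using the reversed commutator for the negative branch when $|I|$ is even, exactly as in the paper), and compose $n$ of them along brackets spanning $T_pM$. The only divergence is the final step: the paper asserts that the composed map $\varphi$ is $C^1$ with invertible derivative at $0$ and invokes the inverse function theorem, whereas you use only continuity plus the first-order expansion $\phi(\tau)=p+L\tau+o(|\tau|)$ at the origin and conclude local surjectivity by Brouwer's fixed point theorem. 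Your Brouwer argument is correct (the map $\tau\mapsto\tau-L^{-1}(\phi(\tau)-y)$ sends a small closed ball into itself once $\|L^{-1}(y-p)\|$ is small), and it has the modest advantage of demanding less regularity: the $C^1$-smoothness of $\varphi$ away from $0$, which the paper states without detail and which is slightly delicate because of the roots $t=|\tau|^{1/|I|}$, is simply not needed. The price is that you must make sure the remainders in the individual expansions are uniform in the base point $q$ so that they survive the $n$-fold composition; you flag this correctly, and it is exactly what the compact-uniform version of Proposition~\ref{le:quasich} supplies. Both routes are valid; yours is a legitimate, marginally more elementary variant of the same proof.
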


\begin{proof}
We  work in  a  small neighbourhood $U
\subset M$ of $p$ that we identify with
a neighbourhood of $0$ in $\R^n$ .

Let $\phi_t^i=\exp(tX_i)$ be the flow of the vector field $X_i$,
$i=1,\dots,m$. Every curve
$t \mapsto \phi^i_t(q)$ is a trajectory of~\eqref{eq:snh} and we have
$$
\phi_t^i = \mathrm{id} + t X_i + o(t).
$$
For every multi-index $I$ of $\{1,\dots,m\}$,  we define the local
diffeomorphisms $\phi^I_t$ on $U$ by  induction on the length $|I|$ of
$I$:  if $I=iJ$, then
$$
\phi_t^{iJ}=[\phi_t^i,\phi_t^J]
:=\phi_{-t}^J \circ \phi_{-t}^i
\circ \phi_t^J \circ\phi_t^i.
$$
By construction, $\phi^I_t (q)$
is the endpoint of a trajectory of \eqref{eq:snh} issued from
$q$. Moreover, on a neighbourhood of $p$ there holds
\begin{equation}
\label{eq:phiI}
\phi^I_t =\mathrm{id} + t^{|I|} X_{I} + o(t^{|I|}).
\end{equation}
We postpone the proof of this formula to the Appendix
(Proposition~\ref{le:quasich}).

To obtain a diffeomorphism whose derivative with respect to the
time is exactly $X_I$, we set
$$
\psi^I_t = \left\{
\begin{array}[c]{ll}
\phi^I_{t^{1/|I|}} & \hbox{if } t \geq 0, \\[2mm]
\phi^I_{-|t|^{1/|I|}} & \hbox{if } t < 0 \hbox{ and } |I| \hbox{
is
  odd}, \\[2mm]
{[} \phi^J_{|t|^{1/|I|}},\phi^i_{|t|^{1/|I|}} {]}  & \hbox{if } t
< 0 \hbox{
  and } |I| \hbox{ is even},
\end{array}
\right.
$$
where $I=iJ$.
Thus
\begin{eqnarray}
\label{eq:psii}
\psi^I_t =\mathrm{id} + t X_{I} + o(t),
\end{eqnarray}
and  $\psi^I_t (q)$ is the endpoint of a trajectory of \eqref{eq:snh} issued from $q$.

Let us choose now commutators $X_{I_1}, \dots,X_{I_n}$ whose
values at $p$ span $T_pM$. This is possible thanks to Chow's
Condition. We introduce the map $\varphi$ defined on a
small
neighbourhood $\Omega$ of $0$ in $\R^n$ by
$$
\varphi (t_1, \dots , t_n)= \psi^{I_n}_{t_n} \circ \cdots \circ
\psi^{I_1}_{t_1} (p) \in M.
$$
We conclude from (\ref{eq:psii}) that this map is $C^1$ near $0$ and has an invertible
derivative at $0$, which implies that it is a
local $C^1$-diffeomorphism. Therefore $\varphi(\Omega)$ contains a
neighbourhood of $p$.

Now, for every $t \in \Omega$, $\varphi(t)$  is  the endpoint of a
concatenation of trajectories of \eqref{eq:snh}, the first one being issued
from $p$. It
is then the endpoint of a trajectory starting from $p$.  Therefore
$\varphi(\Omega) \subset \mathcal{A}_p$, which implies that
$\mathcal{A}_p$ is a neighbourhood of
$p$. 
\end{proof}

\begin{theorem}[Chow-Rashevsky's theorem]
\label{thmCR}
If $M$ is connected and if \eqref{eq:snh} satisfies Chow's Condition,
then any two
points of $M$ can be joined by a trajectory of \eqref{eq:snh}.
\end{theorem}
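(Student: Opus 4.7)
The plan is to deduce the theorem from Lemma~\ref{le:Apopen} by a standard connectedness argument. The point is that, once we know the attainable set from any point is open, mutual reachability becomes an equivalence relation whose classes form an open partition of $M$, which connectedness collapses to a single class.

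First I would define the relation $p \sim q$ on $M$ by: there exists a trajectory of~\eqref{eq:snh} joining $p$ to $q$. I claim this is an equivalence relation. Reflexivity is clear (take the constant control $u \equiv 0$, giving the constant trajectory at $p$). Symmetry follows from the time-reversal property noted in the bullet points of Section~\ref{se:snh}: if $\gamma:[0,T]\to M$ is a trajectory from $p$ to $q$ with control $u$, then $s\mapsto \gamma(T-s)$ is a trajectory from $q$ to $p$ associated with the control $-u(T-\cdot)$. Transitivity follows from concatenating two trajectories (and their controls) at the common endpoint, after a trivial time translation; the concatenated control lies in $L^1$, so the concatenation is still a trajectory in the sense of the definition.

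With this relation fixed, the equivalence class of $p$ is exactly the attainable set $\mathcal{A}_p$. Lemma~\ref{le:Apopen} then asserts that each equivalence class is a neighbourhood of each of its points, i.e.\ every equivalence class is open in $M$. Since the equivalence classes partition $M$, each one is also the complement of the union of the other (open) classes, hence closed. Thus every $\mathcal{A}_p$ is both open and closed.

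Because $M$ is connected and $\mathcal{A}_p$ is nonempty (it contains $p$), we conclude $\mathcal{A}_p = M$ for every $p\in M$, which is exactly the statement of the theorem. The only nonroutine step is really Lemma~\ref{le:Apopen} itself, which has already been established using the commutator flows $\phi^I_t$ and $\psi^I_t$ built from Chow's condition; the present argument only packages its conclusion via an equivalence-relation/connectedness argument, and there is no substantive obstacle here beyond verifying that symmetry and transitivity of $\sim$ are compatible with the $L^1$-control framework, which is immediate from the closure properties of trajectories recalled in Section~\ref{se:snh}.
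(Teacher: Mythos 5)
Your proof is correct and follows essentially the same route as the paper: the attainable sets are the classes of the mutual-reachability equivalence relation, Lemma~\ref{le:Apopen} makes each class open, and connectedness of $M$ forces a single class. You merely spell out the symmetry/transitivity verifications that the paper leaves implicit, so there is nothing to add.
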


\begin{proof}
Let $p\in M$.  If $q\in \mathcal{A}_p$, then $p \in \mathcal{A}_q$. As
a consequence, $\mathcal{A}_p=\mathcal{A}_q$ for any $q\in M$ and the
lemma above implies that $\mathcal{A}_p$ is an open set. Hence the
manifold $M$ is covered by the union of the sets $\mathcal{A}_p$ that
are pairwise disjointed. Since $M$ is connected, there is only
one such open set.
\end{proof}

\begin{remark}
This  theorem appears also as a consequence of the Orbit Theorem
(Sussmann, Stefan~\cite{ste74,sus73}):
each set $\mathcal{A}_p$ is a connected immersed submanifold of $M$ and,
at each point $q \in \mathcal{A}_p$, $\mathit{Lie}(X_1, \dots, X_m)(q) \subset
T_q\mathcal{A}_p$.
Moreover, when the rank of the Lie algebra is constant on $M$, both
spaces are
equal, i.e.\ $\mathit{Lie}(X_1, \dots, X_m)(q)= T_q\mathcal{A}_p$.

\noindent Thus, when the Lie algebra generated by $X_1, \dots, X_m$
has constant rank, Chow's Condition is not restrictive: it is indeed
satisfied on each
$\mathcal{A}_p$ by the restriction  of
the vector fields $X_1,\dots,X_m$ to the manifold $\mathcal{A}_p$.
\end{remark}

\begin{remark}
The converse of Chow's theorem is false in general. Consider for
instance the nonholonomic system in $\R^3$ defined by
$X_1=\partial_x$, $X_2=\partial_y + f(x)\partial_z$ where
$f(x)=e^{-1/x^2}$ for positive $x$ and $f(x)=0$ otherwise. The
associated sub-Riemannian distance is finite whereas $X_1,\dots,X_m$ do
not satisfy Chow's Condition.

\noindent However, for an analytic nonholonomic system (i.e.\ when $M$
and the vector fields $X_1,\dots,X_m$
are in the analytic category), Chow's Condition is
equivalent to the controllability of~\eqref{eq:snh} (see~\cite{nag66,sus74}).
\end{remark}

\begin{remark}
\label{re:orbit}
Our proof of
Theorem~\ref{thmCR} also shows that, under the assumptions
of the theorem, for every point $p \in M$ the set
$$
\left\{ \exp(t_{i_1} {X}_{i_1}) \circ  \cdots \circ \exp(t_{i_k}
{X}_{i_k})(p) \ : \ k\in \N, \ t_{i_j} \in \R, \ i_j \in \{ 1,
\dots, m\} \right\}
$$
is equal to the whole $M$. This set is often called the \emph{orbit}
at $p$ of the vector fields $X_1, \dots, X_m$.
\end{remark}

\subsection{Topological structure of $(M,d)$}
\label{se:topo}

The proof of Lemma~\ref{le:Apopen} gives a little bit more than
the openness of $\mathcal{A}_p$.
For $\eps$ small enough, any $\phi_t^i(q)$, $0\leq t \leq \eps$, is
a trajectory of length $\eps$. Thus $\varphi (t_1, \dots,t_n)$
is the endpoint of a trajectory of length less than $N
\big( |t_1|^{1/|I_1|} + \cdots + |t_n|^{1/|I_n|} \big)$, where $N$
counts the maximal number of concatenations involved in the
$\psi^{I_i}_{t}$'s. This gives an upper bound for the distance,
\begin{eqnarray}
\label{eq:ineqd}
d \big( p, \varphi(t) \big) \leq N \big( |t_1|^{1/|I_1|} + \cdots +
|t_n|^{1/|I_n|} \big).
\end{eqnarray}

This kind of estimates of the distance in terms of local
coordinates plays an important role in sub-Riemannian geometry, as we
will see in Section~\ref{se:distance}. However here
$(t_1,\dots,t_n)$ are not smooth local coordinates, as $\varphi$ is only a
$C^1$-diffeomorphism, not a smooth diffeomorphism.

Let us try to replace $(t_1,\dots,t_n)$ by smooth local coordinates.
Choose local coordinates $(y_1,\dots,y_n)$ centered at $p$ such
that $\frac{\partial}{\partial y_i}|_p=X_{I_i}(p)$. The map
$\varphi^y=y \circ \varphi$ is a $C^1$-diffeomorphism between
neighbourhoods of $0$ in $\R^n$, and its differential at 0 is
$d\varphi^y_0= \mathrm{Id}_{\R^n}$.

Denoting by $\| \cdot \|_{\R^n}$ the Euclidean norm on $\R^n$, we
obtain, for $\|t \|_{\R^n}$ small enough, $y_i(t)=t_i + o(\|t
\|_{\R^n})$. The inequality (\ref{eq:ineqd}) becomes
$$
d(p,q^y) \leq N' \|y \|_{\R^n}^{1/r},
$$
where $q^y$ denotes the point of coordinates $y$, and $r=\max_i
|I_i|$. This inequality allows to compare $d$ to a Riemannian
distance.

Let $g_R$ be a Riemannian metric on $M$, and $d_R$ the associated
Riemannian distance.  On a compact neighbourhood of $p$, there exists
a constant $c>0$ such that  $g(X_i,X_i)(q) \leq c^{-1}$, which implies
$c d_R(p,q) \leq d(p,q)$. Moreover
we have
$d_R(p,q^y) \geq \mathit{Cst} \|y \|_{\R^n}$.  We have then obtained
a first estimate to the sub-Riemannian distance.

\begin{theorem}
\label{th:RvsSR}
Assume \eqref{eq:snh} satisfies Chow's Condition. For any
Riemannian metric $g_R$, we have, for $q$
close enough to $p$,
$$
c d_R(p,q) \leq d(p,q) \leq C d_R(p,q)^{1/r},
$$
where $c,C$ are positive constants and $r$ is an integer such that
$\Delta_p^r=T_pM$.
\end{theorem}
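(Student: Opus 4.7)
The proof splits into the two inequalities, which require different techniques.

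For the lower bound $c\,d_R(p,q) \leq d(p,q)$, the plan is to show that a sub-Riemannian-admissible velocity is bounded in $g_R$-norm by its sub-Riemannian norm, up to a local constant. Fix a compact neighbourhood $K$ of $p$; since $X_1,\dots,X_m$ are smooth, there is $c>0$ with $g_R(q,X_i(q)) \leq c^{-2}/m$ for $q \in K$. Then for any $v = \sum u_i X_i(q)$ the Cauchy--Schwarz inequality gives $\sqrt{g_R(q,v)} \leq c^{-1}\|u\|$, so $g_R(\dot\gamma,\dot\gamma)^{1/2} \leq c^{-1} g(\gamma,\dot\gamma)^{1/2}$ along any trajectory staying in $K$. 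Integrating and taking the infimum over minimizing trajectories (which, for $q$ close enough to $p$, stay in $K$ by Remark~\ref{re:velocity1} and the fact that $d(p,q)\to 0$) yields $c\,d_R(p,q) \leq d(p,q)$.

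For the upper bound $d(p,q) \leq C\,d_R(p,q)^{1/r}$, I would invoke directly the construction from the proof of Lemma~\ref{le:Apopen}. Choose commutators $X_{I_1},\dots,X_{I_n}$ with $|I_j|\leq r$ whose values span $T_pM$ (possible by Chow's Condition and the definition of $r$), and form $\varphi(t_1,\dots,t_n) = \psi^{I_n}_{t_n}\circ\cdots\circ\psi^{I_1}_{t_1}(p)$. Since each $\psi^{I_j}_{t_j}(x)$ is reached from $x$ by a trajectory of length bounded by $N_j |t_j|^{1/|I_j|}$ (concatenating flows of the $X_i$ along the bracket tree), concatenation gives the estimate~\eqref{eq:ineqd}:
$$
d(p,\varphi(t)) \leq N\bigl(|t_1|^{1/|I_1|} + \cdots + |t_n|^{1/|I_n|}\bigr).
$$
The next step is to replace the non-smooth parameters $t_i$ by smooth coordinates. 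Pick smooth coordinates $y=(y_1,\dots,y_n)$ centred at $p$ with $\partial_{y_i}|_p = X_{I_i}(p)$; then $\varphi^y := y\circ\varphi$ is a $C^1$-diffeomorphism with $d_0\varphi^y = \mathrm{Id}$. Inverting, $t_i(y) = y_i + o(\|y\|_{\R^n})$, so for $\|y\|_{\R^n}$ small enough, $|t_i| \leq 2\|y\|_{\R^n} \leq 1$, and hence $|t_i|^{1/|I_i|} \leq |t_i|^{1/r} \leq C_1 \|y\|_{\R^n}^{1/r}$. Plugging this into the previous display gives $d(p,q^y) \leq C_2\,\|y\|_{\R^n}^{1/r}$. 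Finally, comparing smooth coordinates to $d_R$ near $p$ (standard: in any chart, $\|y\|_{\R^n}$ and $d_R(p,\cdot)$ are equivalent up to multiplicative constants), we obtain $d(p,q) \leq C\,d_R(p,q)^{1/r}$.

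The only subtle point is the transition between the $C^1$-parametrization $(t_1,\dots,t_n)$ and the smooth coordinates $(y_1,\dots,y_n)$: one must be careful that $\varphi$ is genuinely $C^1$ (not smooth), so the estimate $t_i = y_i + o(\|y\|)$ comes from $d_0\varphi^y = \mathrm{Id}$ and continuity of the differential, rather than a Taylor expansion. Once this is set up, absorbing the $o(\|y\|)$ terms and using $|I_i|\leq r$ together with $\|y\|\leq 1$ to pass to the uniform exponent $1/r$ is straightforward.
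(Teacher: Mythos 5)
Your proposal is correct and follows essentially the same route as the paper: the lower bound by bounding the Riemannian norms of the $X_i$ on a compact neighbourhood so that Riemannian length is controlled by sub-Riemannian length, and the upper bound by reusing the map $\varphi$ and estimate~\eqref{eq:ineqd} from the proof of Lemma~\ref{le:Apopen}, passing to smooth coordinates $y$ with $\partial_{y_i}|_p = X_{I_i}(p)$ via the $C^1$-diffeomorphism with identity differential, and comparing $\|y\|_{\R^n}$ with $d_R$. Your remark on the $C^1$ (rather than smooth) nature of $\varphi$ and on keeping minimizing trajectories inside the compact set is exactly the right care to take.
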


\begin{remark}
 If we choose for $g_R$ a Riemannian metric which is compatible with
 $g$, that is, which satisfies $g_R|_\Delta=g$, then by construction
 $d_R(p,q) \leq d(p,q)$.
\end{remark}

\begin{corollary}
\label{le:d=dist}
Under the hypotheses of Theorem~\ref{thmCR},  $d$ is a distance
function on $M$, i.e.,
\begin{enumerate}[(i)]
\item $d$ is a function from $M \times M$ to $[0,\infty)$;
\item $d(p,q) = d(q,p)$ (symmetry);
\item $d(p,q)=0$ if and only if $p=q$;
\item $d(p,q) + d(p,q') \leq d(p,q')$ (triangle inequality).
\end{enumerate}
\end{corollary}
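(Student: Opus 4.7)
The plan is to verify the four axioms one at a time in the order (ii), (iv), (i), (iii), since the latter two rely on the comparison established in Theorem~\ref{th:RvsSR}. For (ii), the time-reversal property recorded in Section~\ref{se:snh} shows that reparametrizing a curve $\gamma: [0,T] \to M$ by $s \mapsto T-s$ preserves its length while swapping endpoints, so the infima defining $d(p,q)$ and $d(q,p)$ coincide. For (iv), given $\varepsilon > 0$, take near-minimizing trajectories $\gamma_1: [0,T_1] \to M$ from $p$ to $q$ and $\gamma_2: [0,T_2] \to M$ from $q$ to $q'$ and glue them into a single absolutely continuous path on $[0, T_1 + T_2]$ from $p$ to $q'$; the associated control is the concatenation of the underlying $L^1$ controls, and the total length is the sum of the two lengths, so letting $\varepsilon \to 0$ yields $d(p, q') \leq d(p, q) + d(q, q')$.

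For (i), the upper estimate $d(p,q) \leq C\, d_R(p,q)^{1/r}$ in Theorem~\ref{th:RvsSR} shows that $d(p, \cdot)$ is finite on some manifold-open neighborhood $V_p$ of each $p$. Connectedness (hence path-connectedness) of the smooth manifold $M$ lets us join any two points by a continuous path whose compact image is covered by finitely many $V_{p_i}$; chaining along these via (iv) forces $d(p, q) < \infty$ for all $p, q \in M$.

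For (iii), $d(p, p) = 0$ is witnessed by the constant trajectory. For the converse, fix a Riemannian metric $g_R$ and let $V \ni p$ and $c > 0$ be those provided by Theorem~\ref{th:RvsSR}, so that $c\, d_R(p, \cdot) \leq d(p, \cdot)$ on $V$. Choose an open neighborhood $V'$ of $p$ with compact closure $\overline{V'} \subset V$, and set $\delta = c \cdot d_R(p, \partial V') > 0$. If $d(p,q) = 0$, pick a trajectory $\gamma$ from $p$ to $q$ of sub-Riemannian length less than $\delta$; a first-exit argument shows $\gamma$ cannot cross $\partial V'$, since at a hypothetical first crossing point $r \in \partial V'$ one would have $c\, d_R(p, r) \leq d(p, r) < \delta = c\, d_R(p, \partial V')$, a contradiction. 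Hence $q \in V'$, and the lower bound gives $d_R(p, q) = 0$, so $p = q$. The main obstacle is exactly this locality issue in (iii): Theorem~\ref{th:RvsSR} supplies the Riemannian lower bound only on $V$, so before invoking it one must confine a near-zero-length trajectory to a neighborhood of $p$, which the first-exit argument accomplishes.
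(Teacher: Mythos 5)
Your proof is correct, and for (ii) and (iv) it coincides with the paper's argument (time reversal and concatenation). The interesting differences are in (i) and (iii). For (i), the paper simply invokes the Chow--Rashevsky theorem: any two points are joined by a trajectory, and a trajectory (having an $L^1$ control) has finite length, so $d<\infty$ immediately. Your route --- finiteness on a neighbourhood via the upper bound of Theorem~\ref{th:RvsSR}, then chaining along a compact path with the triangle inequality --- is valid but longer; it essentially re-proves a weak form of connectedness-implies-controllability that Theorem~\ref{thmCR} already supplies. For (iii), the paper states that the claim ``follows directly from Theorem~\ref{th:RvsSR}'', whereas you correctly identify that the lower bound $c\,d_R(p,q)\leq d(p,q)$ is obtained by comparing lengths on a compact neighbourhood, so one must first rule out that a trajectory of very small sub-Riemannian length escapes that neighbourhood; your first-exit argument does exactly this and is the standard way to make the step rigorous. (A slightly cleaner version of the exit argument compares Riemannian and sub-Riemannian \emph{lengths} of the initial arc of $\gamma$ up to the first crossing, rather than distances at the crossing point, which avoids re-invoking the theorem at $r$; but your version works since $r\in\partial V'\subset V$.) So your proposal is a correct, somewhat more self-contained variant: it buys explicitness where the paper leans on Theorem~\ref{thmCR} and on an implicit localization, at the cost of extra length.
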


\begin{proof}
By Chow-Rashevsky's theorem (Theorem~\ref{thmCR}), the distance
between any pair of points is finite, which gives \emph{(i)}.
The symmetry of the distance results from the fact that, if
$\gamma(s)$, $s\in [0,T]$, is a trajectory joining $p$ to $q$, then
$s \mapsto \gamma(T-s)$ is a trajectory of same length  joining $q$
to $p$. Point \emph{(iii)} follows directly from
Theorem~\ref{th:RvsSR}. Finally,
the triangle inequality is a consequence of the following remark. If
$\gamma(s)$, $s\in [0,T]$,  is a trajectory joining $p$ to $q$ and
$\gamma'(s)$, $s\in [0,T']$, is a trajectory
joining $q$ to $q'$, then the concatenation $\gamma * \gamma'$,
defined by
$$
\gamma * \gamma' (s) =
\left\{
  \begin{array}[c]{ll}
    \gamma(s) & \hbox{if } s\in [0,T], \\
\gamma'(s-T) & \hbox{if } s\in [T,T+T'], \\
  \end{array}
\right.
$$
 is a
trajectory joining $p$ to $q'$ whose length satisfies
$$
{\mathrm{length}} (\gamma * \gamma') = {\mathrm{length}} (\gamma) +
{\mathrm{length}} (\gamma').
$$
\end{proof}

A second consequence of Theorem~\ref{th:RvsSR} is that the
sub-Riemannian distance $d$ is $1/r$-H\"{o}lder
with respect to any Riemannian distance, and so continuous.

\begin{corollary}
\label{le:toposr}
If \eqref{eq:snh} satisfies Chow's Condition, then the topology
of the metric space $(M,d)$ coincides with the topology of $M$ as a
smooth manifold.
\end{corollary}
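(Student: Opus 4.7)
The plan is to compare $d$ to a Riemannian distance $d_R$ associated to any fixed auxiliary Riemannian metric $g_R$ on $M$. Since $d_R$ metrizes the smooth manifold topology, it suffices by Theorem~\ref{th:RvsSR} to show that $d$ and $d_R$ induce the same topology on $M$, i.e.\ that at every $p\in M$ each $d$-open ball around $p$ contains some $d_R$-open ball around $p$ and vice versa.

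The first direction is routine and uses the upper bound $d(p,q)\leq C d_R(p,q)^{1/r}$ from Theorem~\ref{th:RvsSR}. Given $\eps>0$, I would pick $\delta>0$ small enough that $B_R(p,\delta)$ lies in the validity neighbourhood $V$ of the estimate and $C\delta^{1/r}<\eps$; then $d_R(p,q)<\delta$ implies $d(p,q)<\eps$. This shows that the manifold topology is finer than the $d$-topology.

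The other direction is the interesting one. I would show that if $\eta>0$ is chosen so that $\overline{B_R(p,\eta)}\subset V$, then $d(p,q)<c\eta$ forces $d_R(p,q)<\eta$. The argument goes by contraposition, via a first-exit trick: assuming $d_R(p,q)\geq\eta$, any trajectory $\gamma:[0,T]\to M$ from $p$ to $q$ has continuous Riemannian radial coordinate $t\mapsto d_R(p,\gamma(t))$, so by the intermediate value theorem there is a first time $t^*$ at which $\gamma$ reaches $\partial B_R(p,\eta)$. Writing $q'=\gamma(t^*)\in V$, the initial arc satisfies $\mathrm{length}(\gamma|_{[0,t^*]})\geq d(p,q')\geq c\,d_R(p,q')=c\eta$, hence $\mathrm{length}(\gamma)\geq c\eta$. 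Taking the infimum over such trajectories yields $d(p,q)\geq c\eta$, which is the desired contrapositive.

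The main obstacle is precisely this second step: the lower bound $c\,d_R\leq d$ in Theorem~\ref{th:RvsSR} is purely local, so one cannot conclude directly from $d(p,q)$ being small that $d_R(p,q)$ is small --- a priori $q$ could lie outside $V$ entirely. The first-exit trick is what forces the argument back into a neighbourhood where the Riemannian-versus-sub-Riemannian estimate applies, and it relies crucially on the fact that the sub-Riemannian distance is approximated by lengths of honest trajectories in $M$, along which the Riemannian radial coordinate is a continuous function of time.
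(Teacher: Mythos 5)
Your proof is correct and follows the route the paper intends: Corollary~\ref{le:toposr} is presented there as an immediate consequence of Theorem~\ref{th:RvsSR} (via the remark that $d$ is $1/r$-H\"{o}lder with respect to $d_R$), with no further argument written out. The H\"{o}lder upper bound only gives one inclusion of topologies, and your first-exit argument correctly supplies the other, nontrivial half that the paper leaves implicit: since the lower bound $c\,d_R\leq d$ is purely local, one must rule out that a point $q$ with $d(p,q)$ small sits far from $p$ in the Riemannian sense, and your observation that any trajectory from $p$ to such a $q$ must first cross $\partial B_R(p,\eta)$, accumulating sub-Riemannian length at least $c\eta$ on the way, does exactly that.
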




\section{First-order approximations}
\label{se:1storder}

Consider  a nonholonomic system \eqref{eq:snh}: $\dot{q}=\sum_{i=1}^{m}u_iX_i(q)$ on a manifold $M$ satisfying
Chow's Condition, and denote by $d$ the induced sub-Riemannian distance.
As we have seen in Section~\ref{se:snh}, the infinitesimal behaviour of this system should be captured by an
approximation to the first-order with respect to $d$.
In this section we will then provide notion of first-order
approximation and construct the basis of an infinitesimal calculus
adapted to nonholonomic systems. To this aim, a fundamental role will be played by the concept of noholonomic order of a function at a point. We will then see that
approximations to the first-order appear as nilpotent approximations,
in the sense that $X_1, \dots, X_m$ are approximated by vector fields that
generate a nilpotent Lie algebra.\medskip

The whole section is concerned with local objects. Henceforth,
throughout the section we fix a point $p\in M$ and an open
neighbourhood $U$ of $p$ that we identify
with a neighbourhood of $0$ in $\R^n$ through some local coordinates.


\subsection{Nonholonomic orders}
\label{se:nhorders}
\begin{definition}
Let $f: M \to \R$ be a continuous function. The \emph{nonholonomic order of $f$ at $p$}, denoted by
$\mathrm{ord}_p(f)$, is the real number defined by
$$
\mathrm{ord}_p(f) = \sup \left\{ s \in \R \ : \ f(q) = O \big( d(p,q)^s \big) \right\}.
$$
\end{definition}
This order is always nonnegative. Moreover $\mathrm{ord}_p(f) = 0$ if $f(p) \neq 0$, and $\mathrm{ord}_p(f)
= +\infty$ if $f(p) \equiv 0$.

\begin{example}[Euclidean case]
\label{ex:euclidean}
When $M =\R^n$, $m=n$, and $X_i=\partial_{x_i}$, the sub-Riemannian
distance is simply the  Euclidean distance on $\R^n$. In this case,
nonholonomic orders coincide with the standard ones. Namely,
$\mathrm{ord}_0(f)$  is the smallest
degree of  monomials having nonzero coefficient
in the Taylor series
$$
f(x) \sim \sum c_\alpha x_1^{\alpha_1} \dots x_n^{\alpha_n}
$$
of $f$ at $0$. We will see below that there exists in general an
analogous characterization of nonholonomic orders.
\end{example}

Let $C^\infty (p)$ denote the set of germs of smooth functions at
$p$. For $f \in C^\infty (p)$, we call {\em nonholonomic derivatives
  of order 1 of $f$}  the Lie
derivatives $X_1f, \dots , X_mf$. We call further $X_i(X_jf)$,
$X_i(X_j(X_kf))$,\dots \  the {\em nonholonomic derivatives of $f$ of order
2, 3,\dots}\  The nonholonomic derivative of order 0 of $f$ at $p$ is $f(p)$.

\begin{proposition}
\label{le:order}
Let $f \in C^\infty (p)$. Then $\mathrm{ord}_p(f)$ is equal to the biggest integer $k$ such that all
nonholonomic derivatives of $f$ of order smaller than $k$
  vanish at $p$. Moreover,
  $$
  f(q) = O \big( d(p,q)^{\mathrm{ord}_p(f)} \big).
  $$
\end{proposition}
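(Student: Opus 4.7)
Let $k^{\ast}$ denote the largest integer such that every nonholonomic derivative of $f$ of order strictly less than $k^{\ast}$ vanishes at $p$; by the very definition of $k^{\ast}$ (and unless $k^{\ast}=+\infty$) there exists a multi-index $(i_1,\dots,i_{k^{\ast}})$ with $(X_{i_1}\cdots X_{i_{k^{\ast}}}f)(p)\neq 0$. The plan is to prove the two inequalities $\mathrm{ord}_p(f)\ge k^{\ast}$ and $\mathrm{ord}_p(f)\le k^{\ast}$ separately; the first of them will simultaneously yield the ``Moreover'' estimate.

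For $\mathrm{ord}_p(f)\ge k^{\ast}$, I would invoke Remark~\ref{re:velocity1}: every $q$ close enough to $p$ is reached from $p$ by a trajectory $\gamma_u:[0,T]\to M$ with $T=d(p,q)$ and $\|u(t)\|\le 1$ for almost every $t$. Iterating the identity $\frac{d}{dt}(Yf)(\gamma_u(t))=\sum_i u_i(t)(X_iYf)(\gamma_u(t))$, starting from $Y=\mathrm{Id}$ and applying it $k^{\ast}$ times, produces a Cauchy-type expansion of $f(\gamma_u(t))$ whose first $k^{\ast}$ ``principal'' terms are constant multiples of nonholonomic derivatives of $f$ of order $<k^{\ast}$ evaluated at $p$, hence vanish; the remainder is a $k^{\ast}$-fold iterated integral of the product $u_{i_1}(s_1)\cdots u_{i_{k^{\ast}}}(s_{k^{\ast}})\,(X_{i_{k^{\ast}}}\cdots X_{i_1}f)(\gamma_u(s_{k^{\ast}}))$, summed over the $m^{k^{\ast}}$ multi-indices. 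Since $|u_{i_j}|\le 1$ and the order-$k^{\ast}$ derivatives of $f$ are bounded on a compact neighbourhood of $p$, the integral is bounded by $C\,t^{k^{\ast}}/k^{\ast}!$. Setting $t=d(p,q)$ gives $|f(q)|\le C\,d(p,q)^{k^{\ast}}$, which delivers both $\mathrm{ord}_p(f)\ge k^{\ast}$ and the announced $O$-estimate.

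For $\mathrm{ord}_p(f)\le k^{\ast}$, I construct an explicit family of test points witnessing that $f$ is \emph{not} $O(d^s)$ for $s>k^{\ast}$. Define $q_\eps=\phi^{i_{k^{\ast}}}_{\eps\tau_{k^{\ast}}}\circ\cdots\circ\phi^{i_1}_{\eps\tau_1}(p)$, where $(i_1,\dots,i_{k^{\ast}})$ is the multi-index fixed above and the $\tau_j$'s are real parameters to be chosen. Iterated smooth Taylor expansion of $f$ along these flows yields, for any truncation $N\ge k^{\ast}$,
$$
f(q_\eps)=\sum_{|\alpha|\le N}\frac{\eps^{|\alpha|}}{\alpha!}\,\tau^\alpha\,(X_{i_1}^{\alpha_1}\cdots X_{i_{k^{\ast}}}^{\alpha_{k^{\ast}}}f)(p)+O(\eps^{N+1}).
$$
All terms with $|\alpha|<k^{\ast}$ are nonholonomic derivatives of order $<k^{\ast}$ at $p$ and therefore vanish. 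The coefficient of $\eps^{k^{\ast}}$ is a polynomial $P(\tau)$ of degree $k^{\ast}$ whose monomial $\tau_1\cdots\tau_{k^{\ast}}$ arises uniquely from $\alpha=(1,\dots,1)$ and has coefficient $(X_{i_1}\cdots X_{i_{k^{\ast}}}f)(p)\neq 0$; hence $P\not\equiv 0$. Choosing $\tau^{\ast}$ with $P(\tau^{\ast})\neq 0$ and using that each $\phi^{i_j}_{\eps\tau^{\ast}_j}$ is a trajectory of length $\eps|\tau^{\ast}_j|$, the triangle inequality gives $d(p,q_\eps)\le\eps\sum_j|\tau^{\ast}_j|$, while $|f(q_\eps)|\sim|P(\tau^{\ast})|\,\eps^{k^{\ast}}$ as $\eps\to 0$. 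For any $s>k^{\ast}$ this forces $|f(q_\eps)|/d(p,q_\eps)^s\to\infty$, so $f$ is not $O(d^s)$, giving $\mathrm{ord}_p(f)\le k^{\ast}$.

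The main obstacle I expect is the bookkeeping in the iterated expansion of the first step, together with the local-to-uniform passage: one must keep track of the correct combinatorial constants and, crucially, justify that every admissible trajectory with $\|u\|\le 1$ and time horizon at most $d(p,q)$ stays in one common compact neighbourhood of $p$ on which the nonholonomic derivatives of $f$ of order $k^{\ast}$ are uniformly bounded. This is legitimate because, for $q$ close to $p$, the time $d(p,q)$ is small and standard ODE theory provides a uniform time of existence in a fixed compact set, which makes the whole argument truly local, as required by the working neighbourhood $U$ fixed at the start of the section.
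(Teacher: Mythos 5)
Your proposal is correct and follows essentially the same route as the paper: the bound $|f(q)|\leq C\,d(p,q)^{k^{\ast}}$ is obtained by integrating $f$ along a unit-speed minimizing trajectory (you merely unroll the paper's induction on $\ell$ into a single iterated-integral remainder, which is equivalent), and the reverse inequality $\mathrm{ord}_p(f)\leq k^{\ast}$ comes from evaluating $f$ at points $\exp(t_{k^{\ast}}X_{i_{k^{\ast}}})\circ\cdots\circ\exp(t_1X_{i_1})(p)$, whose sub-Riemannian distance to $p$ is at most $\sum_j|t_j|$. The only cosmetic difference is in that second half: the paper concludes directly that the mixed partial $\partial^{k}f(\exp(t_kX_{i_k})\circ\cdots\circ\exp(t_1X_{i_1})(p))/\partial t_1\cdots\partial t_k$ at $0$ must vanish whenever $f=O(d^{k+s})$ with $s>0$, whereas you restrict to a ray $t=\eps\tau^{\ast}$ chosen off the zero set of the degree-$k^{\ast}$ homogeneous Taylor polynomial; both arguments are valid.
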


\begin{proof}
The proposition results from the  following two assertions:
\begin{enumerate}
  \item[\emph{(i)}] if $\ell$ is an integer such that $\ell <\mathrm{ord}_p(f)$, then all nonholonomic
  derivatives of $f$ of order $\leq \ell$ vanish at $p$;
  \item[\emph{(ii)}] if $\ell$ is an integer such that all nonholonomic derivatives of $f$ of order
  $\leq \ell$ vanish at $p$, then $f(q) = O \big( d(p,q)^{\ell + 1} \big)$.
\end{enumerate}
Let us first prove point \emph{(i)}. Let $\ell$ be an integer such that $\ell <\mathrm{ord}_p(f)$.
We write a
  nonholonomic
  derivative of
  $f$ of order $k \leq \ell$  as
  $$
(X_{i_1} \dots X_{i_k}f)(p) = \frac{\partial^k}{\partial t_1
\cdots
\partial t_k} f \big( \exp(t_k X_{i_k}) \circ \cdots \circ \exp(t_1 X_{i_1}) (p)
\big)\Big|_{t=0}.
  $$
The point $q =\exp(t_k X_{i_k}) \circ \cdots \circ \exp(t_1 X_{i_1})
(p)$ is the endpoint of a trajectory of length $|t_1|+
\cdots + |t_n|$. Therefore, $d(p,q) \leq |t_1|+ \cdots + |t_n|$.

Since $k \leq \ell <\mathrm{ord}_p(f)$, there exists a real number $s>0$ such that $f(q)= O\big((|t_1|+ \cdots +
|t_n|)^{k+s} \big)$. This implies that
$$
(X_{i_1} \dots X_{i_k}f)(p) = \frac{\partial^k}{\partial t_1
\cdots
\partial t_k} f (q)\Big|_{t=0} =0.
$$
Thus point \emph{(i)} is proved.\medskip

The proof of point \emph{(ii)} goes by induction on $\ell$.
For $\ell=0$, assume that all nonholonomic derivatives of $f$ of order $\leq 0$ vanish at $p$, that is $f(p)=0$.
Choose any Riemannian metric on $M$ and denote by $d_R$ the associated Riemannian distance on $M$.
Since $f$ is smooth, there holds $f(q) \leq \mathit{Cst} \ d_R(p,q)$ near $p$. By Theorem~\ref{th:RvsSR},  this implies
$f(q) \leq \mathit{Cst} \ d(p,q)$, and so property \emph{(ii)} for $\ell = 0$.

Assume that, for a given $\ell \geq 0$, \emph{(ii)} holds for any function $f$ (induction
hypothesis) and take a function $f$ such that all its nonholonomic
derivatives of order $<\ell+1$ vanish at $p$.

Observe that, for $i=1, \dots,m$, all the nonholonomic derivatives of
$X_if$ of order $<\ell$ vanish at $p$. Indeed, $X_{i_1} \dots
X_{i_k}(X_if) = X_{i_1} \dots X_{i_k}X_if$. Applying the induction
hypothesis to $X_if$ leads to $X_if(q)= O \big( d(p,q)^\ell \big)$. In
other words, there exist positive constants $C_1, \dots, C_m$ such
that, for $q$ close enough to $p$,
$$
X_if (q) \leq C_i d(p,q)^\ell.
$$

Fix now a point $q$ near $p$. By Remark~\ref{re:velocity1}, there
exists a minimizing
curve $\gamma (\cdot)$ of
velocity one joining $p$ to $q$.
Therefore $\gamma$ satisfies
$$
\dot \gamma (t) = \sum_{i=1}^m u_i(t) X_i \big(\gamma(t)\big)
\quad \hbox{for a.e.\ }t \in [0,T], \qquad \gamma(0)=p, \
\gamma(T)=q,
$$
with $\sum_i u_i^2 (t) =1$ a.e.\ and $d\big(p,\gamma (t)\big) = t$
for any $t \in [0,T]$. In particular $d(p,q)= T$.

To estimate $f(q)=f\big(\gamma(T)\big)$, we compute the derivative of
$f\big(\gamma(t)\big)$ with respect to $t$,
\begin{eqnarray*}
\frac{d}{dt}f\big(\gamma(t)\big) & = & \sum_{i=1}^m u_i(t) X_if
\big(\gamma(t)\big), \\
 \Rightarrow \ \left| \frac{d}{dt}f\big(\gamma(t)\big) \right| & \leq
 & \sum_{i=1}^m
 |u_i(t)| C_i d\big(p,\gamma (t)\big)^\ell \leq Ct^\ell,
\end{eqnarray*}
where $C=C_1 + \cdots +C_m$. Integrating this inequality between 0
and $t$ gives
$$
\big| f\big(\gamma(t)\big) \big| \leq | f(p)| + \frac{C}{\ell+1}
t^{\ell+1}.
$$
Note that $f(p)=0$, since the nonholonomic derivative of $f$ of order
$0$ at $p$ vanishes. Finally, at $t=T=d(p,q)$, we obtain
$$
| f(q)| \leq \frac{C}{\ell+1} T^{\ell+1},
$$
which concludes the proof of \emph{(ii)}.
\end{proof}

As a consequence, the nonholonomic order of a smooth (germ of)
function is given by the formula
$$
\mathrm{ord}_p(f)= \min \big\{ s \in \N \ : \ \exists \ i_1,\dots,i_s \in
\{1,\dots,m\} \  \ \mathrm{s.t.} \   \ (X_{i_1}  \dots X_{i_s}f)(p) \neq 0
\big\},
$$
where as usual we adopt the convention that $\min \emptyset = + \infty$.

%

It is clear now that any function in $C^\infty (p)$  vanishing at $p$
is of order $\geq 1$. Moreover,  the following basic
computation rules are satisfied: for every $f,g$ in
$C^\infty (p)$ and every  $\lambda \in \R \setminus \{0\}$,
\begin{eqnarray*}
\mathrm{ord}_p(fg) & \geq & \mathrm{ord}_p (f) + \mathrm{ord}_p (g) ,\\
\mathrm{ord}_p(\lambda f) & = & \mathrm{ord}_p (f),\\
\mathrm{ord}_p(f+g) & \geq & \min \big( \mathrm{ord}_p(f),
\mathrm{ord}_p (g) \big).
\end{eqnarray*}
Note that the first inequality is actually an equality. However
the proof of this fact requires an additional result (see
Proposition~\ref{le:algorder}).\medskip

The notion of nonholonomic order extends to vector fields. Let $VF
(p)$ denote the set of germs
of smooth vector fields at $p$.

\begin{definition}
Let $X \in VF (p)$. The \emph{nonholonomic order of $X$ at $p$},
denoted by $\mathrm{ord}_p(X)$,
is the real number defined by:
$$
\mathrm{ord}_p(X) = \sup \left\{ \sigma \in \R \ : \ \mathrm{ord}_p(Xf) \geq \sigma + \mathrm{ord}_p(f),
\quad \forall f \in C^\infty (p) \right\}.
$$
The order of a differential operator is defined in the same way.
\end{definition}

Note that $\mathrm{ord}_p(X) \in \Z$ since the
order of a smooth function is an integer. Moreover the null
vector field $X\equiv0$ has infinite order,
$\mathrm{ord}_p(0)=+\infty$.

Since the order of a function coincides with its order as a
  differential operator acting by multiplication, we have  the
  following properties. For every $X, Y \in VF (p)$ and every
   $f \in C^\infty(p)$,
\begin{equation}
\label{eq:ordcl}
\begin{array}{rcl}
\mathrm{ord}_p([X,Y]) & \geq & \mathrm{ord}_p (X) + \mathrm{ord}_p (Y) ,\\
\mathrm{ord}_p(fX) & \geq & \mathrm{ord}_p (f) + \mathrm{ord}_p (X),\\
\mathrm{ord}_p(X) & \leq & \mathrm{ord}_p (Xf) - \mathrm{ord}_p (f),\\
\mathrm{ord}_p(X+Y) & \geq & \min \big( \mathrm{ord}_p(X), \mathrm{ord}_p (Y) \big).
\end{array}
\end{equation}
As already noticed for functions, the second inequality is in fact
an equality. This is not the case for the first inequality (take
for instance $X=Y$).

As a consequence of~\eqref{eq:ordcl}, $X_1, \dots ,X_m$ are of order $\geq -1$,
$[X_i,X_j]$ of order $\geq -2$, and more generally, every $X$ in the set $\Delta^k$ is of order $\geq -k$.

\begin{example}[Euclidean case]
\label{ex:euclidean2}
In the Euclidean case (see example~\ref{ex:euclidean}),
 the nonholonomic order of a constant differential operator is
the negative of its usual order. For instance $\partial_{x_i}$ is
of nonholonomic order $-1$. Actually, in this case, every vector field
that does not vanish at $p$ is of nonholonomic order $-1$.
\end{example}

\begin{example}[Heisenberg case]
\label{ex:heis}
Consider the following vector fields  on $\R^{3}$:
$$
X_1  = {\partial_x} - \frac{y}{2}  {\partial_z} \quad \mbox{ and } \quad
X_2 = {\partial_y} + \frac{x}{2}  {\partial_z}.
$$
The coordinate functions $x$ and $y$ have order 1 at $0$, whereas $z$
has order 2 at  $0$,  since  $X_1x(0)=X_2y(0)=1$, $X_1z(0)=X_2z(0)=0$, and
$X_1X_2z(0)=1/2$. These relations also imply $\mathrm{ord}_0(X_1) =
\mathrm{ord}_0(X_2) = -1$. Finally, the Lie bracket $[X_1,X_2]
= \partial_{z}$ is of order $-2$ at $0$ since $[X_1,X_2]z=1$.
\end{example}

We are now in a position to give a meaning to first-order approximation.

\begin{definition}
A family of $m$ vector fields $\widehat{X}_1, \dots, \widehat{X}_m$
defined near $p$ is called a {\em
first-order approximation of $X_1, \dots, X_m$ at $p$} if the vector fields
$X_i - \widehat{X}_i$, $i=1, \dots, m$, are of order $\geq 0$ at $p$.
\end{definition}
A consequence of this definition is that the order at $p$ defined by the vector fields
$\widehat{X}_1, \dots, \widehat{X}_m$
coincides with the one defined by $X_1, \dots, X_m$. Hence for any $f \in C^\infty (p)$ of order greater than $s-1$,
$$
(X_{i_1}  \dots X_{i_s}f)(q) = (\widehat{X}_{i_1}  \dots
\widehat{X}_{i_s}f)(q) +  O \left( d(p,q)^{\mathrm{ord}_p(f)-s+1} \right).
$$


To go further in the characterization of orders and
approximations, we need suitable systems of coordinates.

\subsection{Privileged coordinates}
\label{se:privcoor}
We have introduced in Section~\ref{se:chow} the sets of vector
fields $\Delta^s$, defined  by $\Delta^s = \mathrm{span} \{ X_I \ : \ |I|
\leq s \}$. Since $X_1,\dots,X_m$ satisfy Chow's Condition, the
values of these sets at $p$ form a flag of subspaces of $T_pM$, that is,
\begin{eqnarray}
\label{eq:flag}
\Delta^1(p) \subset \Delta^2 (p) \subset \cdots \subset  \Delta^{r-1}(p) \varsubsetneq  \Delta^r(p)=T_pM,
\end{eqnarray}
where $r=r(p)$ is called the {\em degree of nonholonomy at $p$}.

Set $n_i(p)= \dim \Delta^i(p)$. The $r$-tuple of integers $(n_1(p), \dots,
n_r(p))$ is called the {\em growth vector at $p$}.  The first
integer $n_1(p)\leq m $ is the rank of the family
$X_1(p),\dots,X_m(p)$, and
the last
one $n_r(p)=n$ is the dimension of the manifold $M$.

Let $s \geq 1$. By abuse of notations, we continue to write
$\Delta^s$ for the map $q\mapsto \Delta^s (q)$. This map $\Delta^s$ is a distribution if and
only if $n_s(q)$ is constant on $M$. We
then distinguish two kind
of points.

\begin{definition}
The point $p$ is a {\em regular point} if the growth vector is
constant in a neighbourhood of $p$. Otherwise, $p$ is a {\em
singular point}.
\end{definition}
Thus, near a regular point, all maps $\Delta^s$ are locally
distributions.\medskip

The structure of flag~(\ref{eq:flag}) may also be described by
another sequence of integers. We define the  {\em
weights at $p$}, $w_i=w_i(p)$, $i=1, \dots, n$, by setting $w_j=s$
if $n_{s-1}(p) < j \leq n_s(p)$, where $n_0=0$. In other words,
we have
\begin{multline*}
w_1=\cdots=w_{n_1}=1, \ w_{n_1+1}= \cdots=w_{n_2}=2,  \dots ,\\
w_{n_{r-1}+1}=\cdots=w_{n_r}=r.
\end{multline*}
 The weights at $p$ form an increasing sequence $w_1(p) \leq \cdots
 \leq w_n(p)$ which is constant near $p$ if and only if $p$ is a
 regular point.

\begin{example}[Heisenberg case]
\label{ex:heis2}
The Heisenberg case in $\R^3$ given in example~\ref{ex:heis} has
a growth vector which is equal to $(2,3)$ at every point. Therefore all points
of $\R^3$ are regular. The
weights at any point are $w_1=w_2=1$, $w_3=2$.
\end{example}

\begin{example}[Martinet case]
\label{ex:martinet}
Consider the following vector fields on $\R^{3}$,
$$
X_1  = {\partial_x}  \quad \mbox{ and } \quad
X_2 = {\partial_y} + \frac{x^2}{2}  {\partial_z}.
$$
The only nonzero brackets are
$$
X_{12}= [X_1,X_2]  = x{\partial_z}  \quad \mbox{ and } \quad
X_{112}= [X_1, [X_1,X_2]]  =  {\partial_z}.
$$
Thus the growth vector is equal to
 $(2,2,3)$ on the
 plane $\{ x=0\}$, and to $(2,3)$ elsewhere. As a consequence, the set
 of singular points is the plane $\{ x=0\}$. The
weights  are $w_1=w_2=1$, $w_3=2$ at regular points, and $w_1=w_2=1$,
$w_3=3$ at singular ones.
\end{example}

\begin{example}
\label{ex:dnhinfini}
Consider the  vector fields on $\R^3$
$$
X_1=\partial_x \quad \hbox{and} \quad X_2=\partial_y + f(x) \partial_z,
$$
where $f$ is a smooth function on $\R$ which admits every positive
integer $n \in \N$ as a zero with multiplicity  $n$ (such a function
exists and can even be chosen in the analytic class thanks to the
Weierstrass factorization theorem~\cite[Th.\ 15.9]{Rudin1970}). Every
point $(n,y,z)$ is
singular and the weights at this point are $w_1=w_2=1$,
$w_3=n+1$. As a consequence the degree of nonholonomy $w_3$ is
unbounded on $\R^3$.
\end{example}

Let us give some basic properties of the growth vector and of the weights.
\begin{itemize}

  \item At a regular point, the growth vector is a strictly
  increasing sequence: $n_1(p) < \cdots < n_r(p)$. Indeed, if
  $n_s(q)=n_{s+1}(q)$ in a neighbourhood of $p$, then $\Delta^s$ is
  locally an involutive distribution and so $s=r$.
  As a consequence, at a regular point $p$, the jump between two
  successive weights is never greater than 1, $w_{i+1} - w_i \leq 1$,
  and there holds $r(p) \leq n-m+1$.

  \item For every $s$, the map $q \mapsto n_s(q)$ is a lower
    semi-continuous
    function from $M$ to $\N$.  Therefore, the set of regular points
    is open and dense in $M$.

  \item For every $i=1, \dots, n$, the weight $w_i(\cdot)$ is an upper
    semi-continuous function. In particular, this is the case for the
    degree of nonholonomy $r(\cdot)=w_n(\cdot)$, that is,
  $r(q) \leq r(p)$ for $q$ near $p$. As a consequence $r(\cdot)$ is bounded on any compact subset of $M$.

\item The degree of nonholonomy may be
  unbounded on $M$ (see example~\ref{ex:dnhinfini} above). Thus determining if
   a nonholonomic system is controllable is a non
  decidable problem: the computation of an infinite number of brackets may be
  needed to decide if Chow's Condition is satisfied.

  However, in the case of polynomial vector fields  on
  $\R^n$  (relevant in practice), it can be shown that
  the degree of nonholonomy
  is bounded by a universal function of the degree $k$ of the
  polynomials (see~\cite{gab95b,gab98}):
  $$
r(x) \leq 2^{3n^2} n^{2n} k^{2n}.
  $$
\end{itemize}

The meaning of the sequence of weights is best understood in terms of basis
of $T_pM$. Choose first vector fields $Y_1, \dots,Y_{n_1}$ in
$\Delta^1$ whose values at $p$ form a
basis of $\Delta^1(p)$.  Choose then vector fields $Y_{n_1+1}, \dots,Y_{n_2}$
in $\Delta^2$ such that the values
$Y_1(p),\dots,Y_{n_2}(p)$ form a
basis of $\Delta^2(p)$. For  each $s$, choose $Y_{n_{s-1}+1},
\dots,Y_{n_s}$ in $\Delta^s$
such that $Y_1(p),\dots,Y_{n_s}(p)$ form a basis of $\Delta^s(p)$. We
obtain in this way a
family of vector fields $Y_1, \dots , Y_n$ such that
\label{pa:adaptedframe}
\begin{eqnarray}
\label{eq:adaptedframe}
\left\{
\begin{array}{l}
  Y_1(p), \dots , Y_{n}(p) \hbox{ is a basis of } T_pM, \\
  Y_i \in \Delta^{w_i} , \ i=1, \dots, n.
\end{array}
\right.
\end{eqnarray}
A family of $n$ vector fields satisfying (\ref{eq:adaptedframe}) is
called an {\em adapted frame  at $p$}. The word ``adapted''
means here ``adapted to the flag~(\ref{eq:flag})'', since the values at
$p$ of an adapted frame contain a basis $Y_1(p), \dots,
Y_{n_s}(p)$ of each subspace $\Delta^s(p)$ of the flag. By continuity,
at a point $q$ close enough to $p$,
the values of
$Y_1, \dots, Y_n$  still form  a basis of $T_qM$.
However, if $p$ is singular,  this basis may  not be adapted to the
flag~(\ref{eq:flag}) at $q$.\medskip

Let us explain now the relation between weights and orders. We write first the
tangent space as a direct sum,
$$
T_pM= \Delta^1(p) \oplus \Delta^2(p) / \Delta^1(p) \oplus \cdots \oplus
\Delta^r(p)/ \Delta^{r-1}(p),
$$
where $\Delta^s(p)/\Delta^{s-1}(p)$ denotes a supplementary of
$\Delta^{s-1}(p)$ in
$\Delta^s(p)$, and take a local system of coordinates $(y_1,\dots,y_n)$.
The dimension of each space $\Delta^s(p)/\Delta^{s-1}(p)$ is equal to
$n_s-n_{s-1}$,
and we can assume that, up to a reordering, we have
$dy_j(\Delta^s(p)/\Delta^{s-1}(p)) \neq 0$ for $n_{s-1}< j \leq n_s$.

Take an integer $j$ such that  $0<j \leq n_1$. From the assumption
above, there holds $dy_j(\Delta^1(p))\neq 0$, and consequently there
exists  $X_i$ such that $dy_j(X_i(p)) \neq 0$. Since
$dy_j(X_i)=X_i y_j$ is a first-order nonholonomic derivative of $y_j$, we
have $\mathrm{ord}_p (y_j) \leq 1=w_j$.

Take now an integer $j$ such that $n_{s-1}< j \leq n_s$ for $s >1$,
that is, $w_j=s$.
Since $dy_j(\Delta^s(p)/\Delta^{s-1}(p)) \neq 0$, there exists a
vector field $Y$ in $\Delta^s$ such that  $dy_j(Y(p))=(Yy_j)(p) \neq 0$. By definition of
$\Delta^s$, the Lie derivative $Yy_j$ is a
linear combination of nonholonomic derivatives of $y_j$ of order not
greater than $s$. One of them must be nonzero, and so
$\mathrm{ord}_p(y_j) \leq s=w_j$.

Finally, any system of local coordinates $(y_1,\dots,y_n)$
satisfies $\mathrm{ord}_p(y_j) \leq w_j$ up to a reordering (or
$\sum_{i=1}^n \mathrm{ord}_p(y_i) \leq \sum_{i=1}^n
w_i$ without
reordering).  The coordinates with the maximal possible order will play an
important role.

\begin{definition}
A {\em system of privileged coordinates at $p$} is a system of
local coordinates $(z_1,\dots,z_n)$ such that $\mathrm{ord}_p(z_j)=w_j$
for $j=1,\dots,n$.
\end{definition}

Notice that privileged coordinates $(z_1,\dots,z_n)$ satisfy
\begin{eqnarray}
\label{eq:adapted}
dz_i (\Delta^{w_i}(p)) \neq 0, \quad dz_i (\Delta^{w_i-1}(p))=0, \quad
i=1,\dots,n,
\end{eqnarray}
or, equivalently, $\partial_{z_i}|_p$ belongs to $\Delta^{w_i}(p)$ but
not to $\Delta^{w_i-1}(p)$. Local coordinates satisfying~(\ref{eq:adapted}) are
called {\em linearly adapted coordinates} (``adapted" because the
differentials at $p$ of the coordinates form a basis of $T^*_pM$
dual to the values of an adapted frame).
Thus privileged coordinates are always linearly adapted
coordinates. The converse is false, as shown in the example below.

\begin{example}
Take $X_1=\partial_x$, $X_2=\partial_y + (x^2 +y) \partial_z$ in
$\R^3$. The weights at 0 are $(1,1,3)$ and $(x,y,z)$ are adapted
at 0. But they are not privileged: indeed, the coordinate $z$ is of order
$2$ at $0$ since $(X_2X_2 z)(0)=1$.
\end{example}

\begin{remark}
As it is suggested by Kupka~\cite{kup96}, one can define
\emph{privileged functions at $p$} to be the smooth
functions $f$ on $U$ such that
$$
\mathrm{ord}_p(f) = \min \{ s \in \N \ : \ df(\Delta^s(p)) \neq 0 \}.
$$
It results from the discussion above that some
local coordinates $(z_1,\dots,z_n)$ are privileged
 at $p$ if and only if each $z_i$ is a privileged function
at $p$.
\end{remark}

Let us now show how to compute orders using privileged
coordinates.
We fix a system of privileged coordinates $(z_1,\dots,z_n)$ at $p$.
Given a sequence of integers $\alpha=(\alpha_1, \dots, \alpha_n)$,
we define the weighted degree of the monomial
$z^\alpha=z_1^{\alpha_1} \cdots z_n^{\alpha_n}$ to be  $w(\alpha)=w_1
\alpha_1 + \cdots + w_n \alpha_n$ and the weighted degree of the
monomial vector field $z^\alpha \partial_{z_j}$ as
$w(\alpha)-w_j$. The weighted degrees allow to compute the orders
of functions and vector fields in a purely algebraic way.

\begin{proposition}
\label{le:algorder}
For a smooth function $f$ with a Taylor expansion
$$
f(z) \sim \sum_{\alpha} c_\alpha z^\alpha,
$$
the order of $f$ is the least weighted degree of  monomials having  a
nonzero coefficient in the Taylor series.

For a vector field $X$ with a Taylor expansion
  $$
X(z) \sim \sum_{\alpha, j} a_{\alpha,j} z^\alpha \partial_{z_j},
$$
the order of $X$ is the least weighted degree of a monomial vector
fields  having  a
nonzero coefficient in the Taylor series.
\end{proposition}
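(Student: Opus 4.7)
The plan is to establish the statement for functions first and then deduce the vector-field version. Throughout, set $s_0=\min\{w(\alpha):c_\alpha\neq 0\}$ for the functional case (and analogously $s_0=\min\{w(\alpha)-w_j:a_{\alpha,j}\neq 0\}$ for the vector-field case). I prove $\mathrm{ord}_p(f)\geq s_0$ and $\mathrm{ord}_p(f)\leq s_0$ separately.

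\emph{Lower bound.} Since the coordinates are privileged, $\mathrm{ord}_p(z_j)=w_j$, and the submultiplicativity rules stated just after Proposition~\ref{le:order} give $\mathrm{ord}_p(z^\alpha)\geq w(\alpha)$; Proposition~\ref{le:order} then upgrades this into the honest estimate $z^\alpha=O(d(p,q)^{w(\alpha)})$. Truncate the Taylor expansion as $f(z)=\sum_{|\alpha|\leq N}c_\alpha z^\alpha+R_N(z)$, with $R_N(z)=O(\|z\|^{N+1})$; Theorem~\ref{th:RvsSR} gives $\|z\|\leq d(p,q)/c$, so $R_N=O(d(p,q)^{N+1})$. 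Choosing $N$ large enough makes every surviving monomial and the remainder $O(d(p,q)^{s_0})$, whence $\mathrm{ord}_p(f)\geq s_0$.

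\emph{Upper bound, and the main obstacle.} Write $f=P_{s_0}+g$, where $P_{s_0}$ is the weighted-homogeneous part of $f$ of weighted degree $s_0$ and $g=\sum_{w(\alpha)>s_0}c_\alpha z^\alpha$. The lower-bound argument applied to $g$ yields $\mathrm{ord}_p(g)\geq s_0+1$, so by the ``min'' inequality on sums it suffices to prove the following \emph{key claim}: any nonzero weighted-homogeneous polynomial $P$ of weighted degree $s_0$ satisfies $\mathrm{ord}_p(P)\leq s_0$, i.e., some length-$s_0$ product $X_{i_1}\cdots X_{i_{s_0}}$ applied to $P$ is nonzero at $p$. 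To prove this, pick an adapted frame $Y_1,\dots,Y_n$ at $p$ with $Y_j(p)=\partial_{z_j}|_p$; since $Y_j\in\Delta^{w_j}$, after expanding Lie brackets each $Y_j$ is a linear combination of products of the $X_i$'s of length $\leq w_j$, so $Y^\alpha:=Y_1^{\alpha_1}\cdots Y_n^{\alpha_n}$ is a linear combination of products of $X_i$'s of length $\leq s_0$ whenever $w(\alpha)=s_0$. Since $P$ has order $\geq s_0$, Proposition~\ref{le:order} kills the length-$<s_0$ contributions at $p$, so $(Y^\alpha P)(p)$ lies in the linear span of the length-exactly-$s_0$ nonholonomic derivatives of $P$ at $p$. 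The claim therefore reduces to showing that the square matrix $M=[(Y^\alpha z^\beta)(p)]$, indexed by multi-indices of weighted degree $s_0$, is nonsingular. This is the technical heart of the proof: one orders the multi-indices lexicographically on $(\alpha_n,\alpha_{n-1},\dots,\alpha_1)$ and verifies by a Leibniz expansion that $M$ is upper triangular with diagonal entries $M_{\alpha\alpha}=\alpha!\neq 0$, crucially using the exact identity $Y_jz_k(p)=\delta_{jk}$.

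\emph{Vector fields.} From the functional case one deduces $\mathrm{ord}_p(\partial_{z_j})=-w_j$: applying $\partial_{z_j}$ termwise to the Taylor series of any smooth $f$ shows $\mathrm{ord}_p(\partial_{z_j}f)\geq\mathrm{ord}_p(f)-w_j$, while $\partial_{z_j}z_j=1$ witnesses equality. Then for $X=\sum a_{\alpha,j}z^\alpha\partial_{z_j}$, the lower bound $\mathrm{ord}_p(X)\geq s_0$ follows from submultiplicativity applied monomial by monomial, together with a Taylor-remainder estimate identical to the one above. For the upper bound, compute $Xz_k=\sum_\alpha a_{\alpha,k}z^\alpha$ and apply the functional part of the proposition: $\mathrm{ord}_p(Xz_k)=\min\{w(\alpha):a_{\alpha,k}\neq 0\}$. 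The third inequality of~\eqref{eq:ordcl} then yields $\mathrm{ord}_p(X)\leq\mathrm{ord}_p(Xz_k)-w_k$, and taking the minimum over $k$ gives the desired upper bound $\mathrm{ord}_p(X)\leq s_0$.
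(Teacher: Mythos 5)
Your proposal is correct and rests on the same mechanism as the paper's proof: pairing monomials $z^\beta$ against the operators $Y^\alpha$ built from an adapted frame normalized by $Y_i(p)=\partial_{z_i}|_p$, and reducing the vector-field statement to the function statement via $\mathrm{ord}_p(\partial_{z_j})=-w_j$ and the third rule of~\eqref{eq:ordcl}. The difference is one of explicitness, and it is to your credit: the paper passes from the single-monomial computation $(Y^\alpha z^\alpha)(p)\neq 0$ to general series with the words ``in the same way,'' which silently requires exactly the nonsingularity of your matrix $M=[(Y^\alpha z^\beta)(p)]$ on each weighted-homogeneous level (the off-diagonal entries do \emph{not} all vanish, e.g.\ $(Y_1^2 z_2)(p)$ can be nonzero when $w_2=2$). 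Your decomposition $f=P_{s_0}+g$, the reduction of the key claim to the nonsingularity of $M$, the handling of the Taylor remainder via Theorem~\ref{th:RvsSR}, and the vector-field part via $Xz_k$ are all sound.

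The one place you under-justify is the triangularity of $M$. It is true, but it does not follow from a Leibniz expansion plus the identity $Y_jz_k(p)=\delta_{jk}$ alone: when $|\alpha|>|\beta|$ some coordinate factor is hit by several frame derivatives, producing quantities $(Y_{i_1}\cdots Y_{i_k}z_j)(p)$ with $k\geq 2$ that $\delta_{jk}$ says nothing about. What saves the argument is the order calculus: since $Y_i\in\Delta^{w_i}$ and the $z_j$ are privileged, $(Y_{i_1}\cdots Y_{i_k}z_j)(p)=0$ whenever $w_{i_1}+\cdots+w_{i_k}<w_j$. Because the total derivative weight equals $w(\beta)$, each factor $z_j$ in a surviving term must absorb derivative weight \emph{exactly} $w_j$; in particular every $Y_j$ of maximal weight must be matched one-to-one with a factor $z_j$ of the same index, forcing $\alpha_j\leq\beta_j$ on the top weight level, and one peels off levels from the top. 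This yields upper triangularity for the reverse-lexicographic order you propose (which is a linear extension of the resulting partial order), with diagonal entries $\alpha!$. So the claim stands, but the ``crucial'' ingredient is the vanishing of under-weighted iterated derivatives, not just the Kronecker deltas.
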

In other words, when using privileged coordinates, the notion of nonholonomic
order amounts to the usual notion of vanishing order at some
point, only assigning weights to the variables.

\begin{proof}
For $i=1,\dots,n$, we have $\partial_{z_i}|_p \in \Delta^{w_i}
(p)$. Then there exist $n$ vector fields $Y_1, \dots,Y_n$ which form
an adapted
frame at $p$ and   such that
$Y_1(p)=\partial_{z_1}|_p$, \dots,  $Y_n(p)=\partial_{z_n}|_p$. For
every $i$, the vector field $Y_i$ is of order $\geq -w_i$ at $p$ since
it belongs to
$\Delta^{w_i}$. Moreover we have $(Y_iz_i)(p)=1$ and
$\mathrm{ord}_p(z_i)=w_i$. Thus
$\mathrm{ord}_p(Y_i)=-w_i$.

Take a sequence of integers $\alpha=(\alpha_1, \dots, \alpha_n)$.
The  monomial $z^\alpha$ is of order $\geq w(\alpha)$ at $p$ and
the differential operator $Y^\alpha=Y^{\alpha_1}_1 \cdots
Y^{\alpha_n}_n$ is of order $\geq -w(\alpha)$. Observing that
$(Y_iz_j)(p)=0$ if $j\neq i$, we easily see that $(Y^\alpha
z^\alpha)(p) =\frac{1}{\alpha_1! \dots \alpha_n!}\neq 0$, whence
$\mathrm{ord}_p(z^\alpha)= w (\alpha)$.

In the same way, we obtain that, if $z^\alpha$, $z^\beta$ are two
different monomials and $\lambda$, $\mu$ two nonzero real numbers,
then $\mathrm{ord}_p( \lambda z^\alpha + \mu z^\beta) = \min
\big( w(\alpha), w(\beta) \big)$.
Thus the order of a series is the least weighted degree of
monomials actually appearing in the series itself. This shows the
statement on order
of functions.

As a consequence, for any smooth function $f$, the order at $p$ of
$\partial_{z_i}f$ is $\geq \mathrm{ord}_p(f) -w_i$. Since moreover
$\partial_{z_i}z_i=1$, we obtain that $\mathrm{ord}_p(\partial_{z_i})$
is equal to $-w_i$. The second part of the statement
follows. 
\end{proof}

A notion
of homogeneity is also naturally associated with a system of
privileged coordinates
$(z_1,\dots,z_n)$ defined on $U$.  We define
first the one-parameter family of dilations
$$
\delta_t : (z_1,\dots,z_n) \mapsto (t^{w_1}z_1,\dots, t^{w_n}z_n),
\qquad t \geq 0.
$$
Each dilation $\delta_t$ is a map from $\R^n$ to $\R^n$. By abuse of
notations, for $q\in U$ and  $t$ small
enough we write $\delta_t(q)$ instead of $\delta_t(z(q))$, where
$z(q)$ are  the coordinates of $q$. A dilation $\delta_t$ acts also on
functions and vector fields by
pull-back: $\delta_t^*f=f \circ \delta_t$ and $\delta_t^*X$ is the
vector field such that $(\delta_t^*X)(\delta_t^*f)=\delta_t^*(Xf)$.

\begin{definition}
A function $f$ is {\em homogeneous of degree} $s$ if
$\delta_t^*f=t^sf$. A vector field $X$ is {\em homogeneous of
degree} $\sigma$ if $\delta_t^*X=t^\sigma X$.
\end{definition}
For a smooth function (resp. a smooth vector field), this is the same
as being a finite sum of monomials (resp. monomial vector fields)
of weighted degree $s$. As a consequence, if a function $f$ is homogeneous of degree $s$,
then it is of order $s$ at $p$.

A typical degree $1$ homogeneous function is the so-called {\em
  pseudo-norm at $p$}, defined by:
\begin{equation}
  \label{eq:pseudonorm}
z \mapsto \|z\|_p = |z_1|^{1/w_1} + \cdots
+|z_n|^{1/w_n}.
\end{equation}
When composed with the coordinates function, the pseudo-norm at $p$ is a (non
smooth) function of order $1$, that is,
$$
\|z(q)\|_p = O \big( d(p,q)  \big).
$$
Actually, it results from Proposition~\ref{le:algorder} that the order
of a function $f\in C^\infty (p)$
 is the least integer $s$ such that $f(q)=O (\|z(q)\|_p^s)$.

\paragraph{Examples of privileged coordinates.}

Of course all the results above on algebraic computation of orders
hold only if privileged coordinates do exist. Two types of
privileged coordinates are commonly used in the literature.

\subparagraph{a. Exponential coordinates.} Choose an adapted frame
$Y_1, \dots, Y_n$ at $p$. The inverse of the local
diffeomorphism
  $$
(z_1, \dots, z_n) \mapsto \exp(z_1 Y_1 + \cdots + z_n Y_n)(p)
  $$
  defines a system of local privileged coordinates at $p$, called
  {\em canonical coordinates of the first kind}. These coordinates
  are mainly used in the context of hypoelliptic operator and for
  nilpotent Lie groups with right (or left) invariant sub-Riemannian
  structure.

The inverse of the local diffeomorphism
  $$
(z_1, \dots, z_n) \mapsto \exp(z_n Y_n)\circ \cdots \circ \exp(z_1
Y_1)(p)
  $$
 also defines privileged coordinates at $p$, called
  {\em canonical coordinates of the second kind}.
They are easier to work with than the one of the first kind. For
instance, in these coordinates, the vector field $Y_n$ read as
$\partial_{z_n}$. One can also exchange the order of the flows in
the definition to obtain any of the $Y_i$ as $\partial_{z_i}$. The
fact that canonical coordinates of both first and second kind are privileged is proved in Section~\ref{se:2ndkind}.

We leave it to the reader to verify that the diffeomorphism
 $$
(z_1, \dots, z_n) \mapsto \exp(z_n Y_n+ \cdots +z_{s+1}Y_{s+1})\circ
\exp(z_s Y_s) \cdots \circ \exp(z_1 Y_1)(p)
  $$
 also induces privileged coordinates. As a matter of fact, any ``mix" between
  first and second kind canonical coordinates defines privileged
  coordinates.

\subparagraph{b. Algebraic coordinates.} There exist also effective
constructions of privileged coordinates (the construction of
exponential coordinates is not effective in general since it
requires to integrate flows of vector fields). We present here Bella\"{\i}che's
algorithm, but other constructions exist (see~\cite{ste86,agr87}).
\label{pa:coordalg}

\begin{enumerate}
  \item Choose an adapted frame $Y_1, \dots, Y_n$ at $p$.
  \item Choose coordinates $(y_1,\dots,y_n)$ centered at $p$ such
  that $\partial_{y_i}|_p = Y_i(p)$.
  \item For $j=1, \dots,n$, set
  $$
z_j = y_j - \sum_{k=2}^{w_j-1} h_k(y_1, \dots, y_{j-1}),
 $$
  where, for $k=2, \dots, w_j-1$,
  $$
h_k(y_1, \dots, y_{j-1})=
\sum_{\stackrel{\scriptstyle{|\alpha|=k}}{w(\alpha)<w_j}} \!\!\!\!
Y_1^{\alpha_1} \ldots Y_{j-1}^{\alpha_{j-1}} \Big(
y_j -\sum_{q=2}^{k-1} h_q(y)\Big)(p) \
\frac{y_1^{\alpha_1}}{\alpha_1!} \cdots
\frac{y_{j-1}^{\alpha_{j-1}}}{\alpha_{j-1}!},
  $$
  with $|\alpha|=\alpha_1 + \cdots + \alpha_n$.
\end{enumerate}
The fact that coordinates $(z_1, \dots, z_n)$ are privileged at $p$ will
be proved in Section~\ref{se:coordalg}.

Coordinates $(y_1,\dots,y_n)$ are linearly adapted
  coordinates. They can be obtained from any original system of
  coordinates by an affine change. The privileged coordinates $(z_1, \dots, z_n)$ are then obtained from
  $(y_1,\dots,y_n)$ by an expression of the form
  \begin{eqnarray*}
   z_1 & = & y_1, \\
   z_2 & = & y_2 + \mathrm{pol}(y_1), \\
   & \vdots & \\
   z_n & = & y_n + \mathrm{pol}(y_1, \dots, y_{n-1}),
  \end{eqnarray*}
  where each $\mathrm{pol}$ is a polynomial function without constant
  nor linear
  terms. The inverse change of coordinates takes the same
  triangular form, which makes the use of these coordinates easy for computations.

\subsection{Nilpotent approximation}
Fix a system of privileged coordinates $(z_1, \dots, z_n)$ at $p$. Every vector field $X_i$ is of order $\geq -1$, hence it has, in $z$ coordinates,  a  Taylor expansion
$$
X_i(z) \sim \sum_{\alpha, j} a_{\alpha,j} z^\alpha \partial_{z_j},
$$
where $w(\alpha) \geq w_j-1$ if $a_{\alpha,j} \neq 0$.
Grouping together the monomial vector fields of same weighted degree,
we express $X_i$ as a series
$$
X_i = X_i^{(-1)} + X_i^{(0)} + X_i^{(1)} + \cdots
$$
where $X_i^{(s)}$ is a homogeneous vector field of degree $s$.

\begin{proposition}
\label{le:hmnilp}
Set $\widehat{X}_i=X_i^{(-1)}$, $i=1,\dots,m$. The family of vector
fields $\widehat{X}_1, \dots, \widehat{X}_m$ is a
first-order approximation of $X_1, \dots, X_m$ at $p$ and generate a
nilpotent Lie algebra of step $r=w_n$.
\end{proposition}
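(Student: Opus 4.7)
The plan is to treat the two assertions of Proposition~\ref{le:hmnilp} separately. For the first-order approximation claim, I would observe that by construction $X_i - \widehat{X}_i = X_i^{(0)} + X_i^{(1)} + \cdots$ is a smooth vector field whose Taylor expansion at $p$ in the privileged coordinates $(z_1, \dots, z_n)$ consists only of monomials $z^\alpha \partial_{z_j}$ of weighted degree $\geq 0$. Proposition~\ref{le:algorder} then immediately yields $\mathrm{ord}_p(X_i - \widehat{X}_i) \geq 0$, which is precisely the defining condition of a first-order approximation.

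For the nilpotency, the key observation is that pullback by dilations is a Lie algebra homomorphism, $\delta_t^*[X, Y] = [\delta_t^* X, \delta_t^* Y]$, so the bracket of two homogeneous vector fields of degrees $\sigma_1, \sigma_2$ is homogeneous of degree $\sigma_1 + \sigma_2$. Since each $\widehat{X}_i$ is homogeneous of degree $-1$, any iterated bracket of length $k$ among the $\widehat{X}_i$'s is homogeneous of degree $-k$. Now any monomial vector field $z^\alpha \partial_{z_j}$ has weighted degree $w(\alpha) - w_j \geq -w_j \geq -w_n = -r$, so no nonzero smooth vector field can be homogeneous of degree $<-r$. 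Therefore every bracket of length $\geq r+1$ vanishes, and the Lie algebra generated by $\widehat{X}_1, \dots, \widehat{X}_m$ is nilpotent of step at most $r$.

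To see that the step is exactly $r$, it suffices to exhibit one nonvanishing iterated bracket of length $r$. I would reduce this to the flag identity $\widehat{\Delta}^s(p) = \Delta^s(p)$ for all $s$, where $\widehat{\Delta}^s$ is built from the $\widehat{X}_i$'s in the same way as $\Delta^s$: once this identity holds, the strict inclusion $\widehat{\Delta}^{r-1}(p) \subsetneq T_pM = \widehat{\Delta}^r(p)$ forces some length-$r$ bracket to be nonzero at $p$, and hence nonzero as a vector field. Proving the flag identity is the main subtle point. I would argue by induction on $s$, relying on two ingredients. First, a vector field $R$ of order $\geq 0$ at $p$ must vanish at $p$: applying it to each privileged coordinate $z_j$ gives a function of order $\geq w_j \geq 1$, hence $(Rz_j)(p) = 0$ for all $j$, so $R(p) = 0$. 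Second, writing $X_i = \widehat{X}_i + R_i$ with $\mathrm{ord}_p(R_i) \geq 0$ and expanding an iterated bracket $X_I$ of length $k$, the difference $X_I - \widehat{X}_I$ is a sum of brackets each containing at least one factor $R_{i_j}$; the rules \eqref{eq:ordcl} then show each such term has order $\geq -(k-1)$, so its value at $p$ lies in $\Delta^{k-1}(p)$, which closes the induction and yields $\widehat{\Delta}^s(p) = \Delta^s(p)$.
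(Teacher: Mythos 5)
Your proof is correct and follows essentially the same route as the paper's: the first-order approximation claim is by construction (you simply make the appeal to Proposition~\ref{le:algorder} explicit), and nilpotency of step at most $r$ follows, exactly as in the paper, from the additivity of homogeneity degrees under Lie brackets together with the vanishing of any homogeneous vector field of degree below $-w_n$. The paper's proof stops there and never checks that the step is exactly $r$; your additional argument via the flag identity $\widehat{\Delta}^s(p)=\Delta^s(p)$ supplies that missing piece, and it is the same argument the paper gives shortly afterwards in the proof of Lemma~\ref{le:nahomo}, equation~\eqref{eq:grvec}.
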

\begin{proof}
The fact that the vector fields  $\widehat{X}_1, \dots, \widehat{X}_m$ form a
first-order approximation of $X_1, \dots, X_m$ results from their
construction.

Note further that any homogeneous vector field of degree smaller
than $-w_n$ is zero, as it is easy to check in privileged
coordinates. Moreover,
if $X$ and $Y$ are homogeneous of degree respectively $k$ and $l$,
then the bracket $[X,Y]$ is homogeneous of degree $k+l$ because
$\delta_t^* [X,Y]= [\delta_t^*X, \delta_t^*Y]=t^{k+l}[X,Y]$.

It follows that every iterated bracket of the vector fields $\widehat{X}_1,
\dots, \widehat{X}_m$ of length $k$ (i.e.\ containing $k$ of these
vector field) is homogeneous of degree $-k$ and is zero if $k >
w_n$. 
\end{proof}

\begin{definition}
The family $(\widehat{X}_1, \dots, \widehat{X}_m)$ is called the {\em
  (homogeneous)
nilpotent approximation} of $(X_1, \dots, X_m)$ at $p$ associated with the
coordinates $z$.
\end{definition}

\begin{example}[unicycle]
\label{ex:unicycle2}
Consider the vector fields on $\R^2 \times S^1$ defining the kinematic
model of a unicycle (see example~\ref{ex:unicycle}), that is,
$X_1=\cos \theta \partial_x + \sin \theta \partial_y$,
$X_2= \partial_\theta$. We have $[X_1,X_2]=\sin \theta \partial_x - \cos \theta \partial_y$, so the weights are $(1,1,2)$ at every point.  At $p=0$, the coordinates
$(x,\theta)$ have order $1$ and $y$ has order 2, consequently
$(x,\theta,y)$ is a system of privileged coordinates at $0$.  Taking
the Taylor expansion of $X_1$ and $X_2$ in the latter coordinates, we
obtain the homogeneous components:
$$
X_1^{(-1)}= \partial_x + \theta \partial_y, \quad X_1^{(0)}= 0, \quad
X_1^{(1)}= - \frac{\theta^2}{2} \partial_x -
\frac{\theta^3}{3!} \partial_y, \quad \dots
$$
and $X_2^{(-1)}=X_2=\partial_\theta$. Therefore the homogeneous nilpotent
approximation
of $(X_1, X_2)$ at $0$ in coordinates $(x,\theta,y)$ is
$$
\widehat{X}_1= \partial_x + \theta \partial_y, \quad
\widehat{X}_2=\partial_\theta.
$$
We easily check that the  Lie brackets of length $3$ of these vectors
are zero, that is,
$[\widehat{X}_1,[\widehat{X}_1,\widehat{X}_2]] =
[\widehat{X}_2,[\widehat{X}_1,\widehat{X}_2]]=0$, and so the Lie algebra
$\mathit{Lie} (\widehat{X}_1,\widehat{X}_2 )$ is
nilpotent of step $2$.
\end{example}


The homogeneous nilpotent approximation is not intrinsic to the frame
$(X_1,\dots,X_m)$, since it
depends on the chosen system of privileged coordinates. However,
if $\widehat{X}_1, \dots, \widehat{X}_m$ and $\widehat{X}'_1, \dots,
\widehat{X}'_m$ are the nilpotent
approximations associated with two different systems of coordinates,
then their Lie algebras $\mathit{Lie} (\widehat{X}_1, \dots,
\widehat{X}_m )$ and $\mathit{Lie} (\widehat{X}'_1, \dots,
\widehat{X}'_m )$ are isomorphic. If moreover $p$ is a regular point,
then
$\mathit{Lie} (\widehat{X}_1, \dots, \widehat{X}_m )$ is isomorphic
to the graded nilpotent Lie algebra
$$
\mathrm{Gr}(\Delta)_p = \Delta (p) \oplus (\Delta^2 / \Delta^1)(p)
\oplus \cdots \oplus
(\Delta^{r-1} / \Delta^r)(p).
$$

\begin{remark}
The nilpotent approximation denotes in fact two different
objects. Each $\widehat{X}_i$ can be
seen as a vector field on $\R^n$ or as the representation in $z$
coordinates of the vector field $z^*\widehat{X}_i$ defined on a
neighbourhood of $p$ in $M$. This will cause no confusion since
the nilpotent approximation is
associated with a given system of privileged coordinates.
\end{remark}

It is worth to notice the particular form of the nilpotent
approximation  in privileged
coordinates. Write $\widehat{X}_i=\sum_{j=1}^n f_{ij} (z) \partial_{z_j}$. Since
$\widehat{X}_i$ is homogeneous of degree $-1$ and $\partial_{z_j}$ of
degree $-w_j$, the function  $f_{ij}$ is a homogeneous polynomial of
weighted degree $w_j-1$. In particular it can
not involve variables of weight greater than $w_j-1$, that is,
$$
\widehat{X}_i (z)=\sum_{j=1}^n f_{ij} (z_1, \dots, z_{n_{w_j}-1})
\partial_{z_j}.
$$
The nonholonomic control system $\dot z = \sum_{i=1}^m u_i \widehat{X}_i(z)$
associated with the nilpotent approximation is then polynomial and in a
triangular form,
$$
\dot z_j = \sum_{i=1}^m u_i f_{ij} (z_1, \dots, z_{n_{w_j}-1}).
$$
Computing the trajectories of a system in such a form is rather easy:
given the input function
$(u_1(t), \dots,u_m(t))$, it is possible to compute the
coordinates $z_j$ one after the other, only by integration.\medskip

As vector fields on $\R^n$, $\widehat{X}_1, \dots, \widehat{X}_m$
generate a sub-Riemannian distance on $\R^n$ which is homogeneous
with respect to the dilation $\delta_t$.
\begin{lemma} \
\label{le:nahomo}
\begin{itemize}
  \item[(i)] The family $( \widehat{X}_1, \dots, \widehat{X}_m )$
    satisfies Chow's Condition on $\R^n$.

\item[(ii)] The growth vector at $0$  of $(\widehat{X}_1, \dots, \widehat{X}_m
  )$ is equal to
  the one  at $p$ of  $(X_1, \dots, X_m)$.
\end{itemize}
\noindent Let $\widehat{d}$ be
the sub-Riemannian distance on $\R^n$ associated with $( \widehat{X}_1,
\dots, \widehat{X}_m )$.
\begin{itemize}
  \item[(iii)] \  The distance $\widehat{d}$ is homogeneous of degree 1,
  $$
\widehat{d} (\delta_tx, \delta_ty) = t \widehat{d} (x,y).
$$
  \item[(iv)] \ There exists a constant $C >0$ such that, for all $z
  \in \R^n$,
  $$
\frac{1}{C} \|z\|_p \leq \widehat{d}(0,z) \leq C \| z \|_p,
  $$
\end{itemize}
where $\| \cdot \|_p$ denotes the pseudo-norm at $p$
(see~\eqref{eq:pseudonorm}).
\end{lemma}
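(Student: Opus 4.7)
The plan is to handle the four parts in sequence, leaning throughout on the decomposition $X_i = \widehat{X}_i + R_i$ with $\mathrm{ord}_p(R_i)\geq 0$ and on the characterization of orders by weighted degree in privileged coordinates (Proposition~\ref{le:algorder}).

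The central structural fact for (i) and (ii) is that for every iterated bracket multi-index $I$, the vector field $\widehat{X}_I = [\widehat{X}_{i_1},[\ldots,[\widehat{X}_{i_{k-1}},\widehat{X}_{i_k}]\ldots]]$ is the homogeneous component of degree $-|I|$ of $X_I$. I would prove this by induction on $|I|$: expanding $X_I$ via the bilinearity of the bracket and $X_i = \widehat{X}_i + R_i$, every term that is not $\widehat{X}_I$ contains at least one $R_{i_\ell}$, and since the bracket of homogeneous fields of degrees $\sigma,\tau$ is homogeneous of degree $\sigma+\tau$, each such term is of order $> -|I|$. Then I would choose brackets $X_{I_1},\ldots,X_{I_n}$ with $|I_j|=w_j$ such that $X_{I_1}(p),\ldots,X_{I_n}(p)$ form an adapted frame at $p$; this is possible because $\Delta^s(p)=\mathrm{span}\{X_I(p):|I|\leq s\}$ allows us to pick, at each level $s$, $n_s-n_{s-1}$ brackets of length exactly $s$ that are independent modulo $\Delta^{s-1}(p)$.

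Since the coordinates $(z_1,\ldots,z_n)$ are linearly adapted, $\{\partial_{z_j}|_p\}_{j\leq n_s}$ is a basis of $\Delta^s(p)$, so the matrix expressing the $X_{I_j}(p)$ in the basis $\{\partial_{z_j}|_p\}$ is block-lower-triangular with respect to the weight filtration, with invertible diagonal blocks. By Proposition~\ref{le:algorder}, any vector field homogeneous of degree $-k$ evaluated at $0$ lies in $\mathrm{span}\{\partial_{z_i}|_0 : w_i=k\}$ (only constant-coefficient monomials survive), so $\widehat{X}_{I_j}(0)$ is precisely the diagonal block of $X_{I_j}(p)$ corresponding to weight $w_j$. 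Invertibility of these diagonal blocks then gives that $\widehat{X}_{I_1}(0),\ldots,\widehat{X}_{I_n}(0)$ is a basis of $\R^n$, and that for each $s$ the first $n_s$ of them span $\mathrm{span}\{\partial_{z_i}|_0:w_i\leq s\}$. Combined with the obvious reverse inclusion $\widehat{\Delta}^s(0)\subseteq \mathrm{span}\{\partial_{z_i}|_0:w_i\leq s\}$, this gives the equality of growth vectors (ii) and, in particular, Chow at $0$. To extend Chow's condition from $0$ to every point of $\R^n$, I would use lower semicontinuity of $x\mapsto \dim\mathit{Lie}(\widehat{X}_1,\ldots,\widehat{X}_m)(x)$ to get Chow in a neighborhood of $0$, then dilation invariance: for $x\neq 0$, write $x=\delta_t(y)$ with $y$ close to $0$ (possible since $\delta_{1/t}(x)\to 0$ as $t\to\infty$) and note that $D\delta_t|_y$ maps $\mathit{Lie}(\widehat{X}_1,\ldots,\widehat{X}_m)(y)$ isomorphically onto $\mathit{Lie}(\widehat{X}_1,\ldots,\widehat{X}_m)(x)$.

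For (iii), given a trajectory $\gamma:[0,T]\to\R^n$ with control $u(\cdot)$, a direct computation using the $-1$-homogeneity relation $D\delta_t|_x \widehat{X}_i(x) = t\,\widehat{X}_i(\delta_t x)$ shows that $\eta(s):=\delta_t\bigl(\gamma(s/t)\bigr)$, $s\in[0,tT]$, is a trajectory with control $u(s/t)$ and length $t\cdot\mathrm{length}(\gamma)$. Taking infima over trajectories joining $x$ to $y$ yields $\widehat{d}(\delta_tx,\delta_ty)\leq t\,\widehat{d}(x,y)$, and the reverse inequality follows by applying the same argument to $(1/t,\delta_tx,\delta_ty)$. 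For (iv), by (i) combined with Theorem~\ref{thmCR}, Theorem~\ref{th:RvsSR}, and Corollary~\ref{le:d=dist} applied to $(\widehat{X}_1,\ldots,\widehat{X}_m)$ on $\R^n$, the distance $\widehat{d}$ is finite, positive definite, and continuous. The set $S=\{z:\|z\|_p=1\}$ is compact, so $\widehat{d}(0,\cdot)$ attains a positive minimum $c_1$ and finite maximum $c_2$ on $S$; for arbitrary $z\neq 0$, setting $t=\|z\|_p$ gives $\delta_{1/t}(z)\in S$, and using $\|\delta_tw\|_p=t\|w\|_p$ together with (iii) one obtains $c_1\|z\|_p\leq \widehat{d}(0,z)\leq c_2\|z\|_p$, which proves (iv).

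The main obstacle I expect is the bookkeeping in (i)--(ii): proving that $\widehat{X}_I$ really is the leading homogeneous component of $X_I$, and, more importantly, rigorously matching the block-lower-triangular structure of an adapted frame at $p$ with the diagonal-block structure of vector field values at $0$ dictated by the weighted-degree grading. Once that correspondence is secured, the dilation/compactness arguments for (iii) and (iv) are essentially routine.
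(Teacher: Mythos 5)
Your proposal is correct and follows the paper's proof in all essentials: the leading homogeneous component identity $X_I=\widehat{X}_I+(\text{order}>-|I|)$, hence $\widehat{X}_I(0)=X_I(p)\bmod\Delta^{|I|-1}(p)$ and the equality of growth vectors; dilation of trajectories for (iii); and continuity plus compactness of $\{\|z\|_p=1\}$ plus degree-one homogeneity for (iv). The one place you diverge is in globalizing Chow's condition from $0$ to all of $\R^n$: you use lower semicontinuity of $x\mapsto\dim\mathit{Lie}(\widehat{X}_1,\dots,\widehat{X}_m)(x)$ to get a full-rank neighbourhood of $0$ and then push it out by the dilations, whereas the paper observes that the determinant of $\widehat{X}_{I_1},\dots,\widehat{X}_{I_n}$ is a weighted-homogeneous polynomial of weighted degree $0$, hence a constant, hence nonzero everywhere once it is nonzero at $0$. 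Both arguments are valid; the paper's is a one-line algebraic observation, while yours is slightly longer but uses only the dilation equivariance you need anyway for (iii), so nothing is lost. The ``bookkeeping'' you flag as the main obstacle (block-triangularity of the adapted frame versus the weight grading at $0$) is exactly the content of the displayed congruence $\widehat{X}_I(0)=X_I(p)\bmod\Delta^{|I|-1}(p)$ in the paper, and your justification of it is sound.
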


\begin{proof}
Through the coordinates $z$ we identify the neighbourhood $U$ of $p$
in $M$
with a neighbourhood of $0$ in $\R^n$.

For every iterated
  bracket $X_I=[X_{i_k},\dots,[X_{i_2},X_{i_1}]]$  of the
  vector fields $X_1,\dots, X_m$, we set
$\widehat{X}_I=[\widehat{X}_{i_k},\dots,[\widehat{X}_{i_2},\widehat{X}_{i_1}]]$,
and for $k\geq 1$ we set
  $\widehat{\Delta}^k = \mathrm{span} \{ \widehat{X}_I \ : \ |I| \leq
  k \}$. As noticed in the proof of
Proposition~\ref{le:hmnilp}, a bracket $\widehat{X}_I$ of length
$|I|=k$ is homogeneous of weighted
degree $-k$, and by construction of the nilpotent approximation, there
holds $X_I=\widehat{X}_I +$ terms of order
$>-k$. Therefore,
$$
\widehat{X}_I(0) = X_I(p) \bmod \mathrm{span} \{ \partial_{z_j}\big|_p \: : \: w_j
< k \} = X_I(p) \bmod \Delta^{k-1}(p).
$$
As a consequence, for any integer $k\geq 1$, we have
\begin{equation}
  \label{eq:grvec}
  \dim \widehat{\Delta}^k(0) = \dim \Delta^k(p),
\end{equation}
and property $(ii)$ follows.
Moreover, if $X_{I_1},\dots,X_{I_n}$ form an adapted frame at $p$, then the family $(\widehat{X}_{I_1}(0), \dots, \widehat{X}_{I_n}(0))$ is of rank $n$, which implies that its determinant is nonzero. Since
the determinant of $X_{I_1},\dots,X_{I_n}$ is an homogeneous polynomial of weighted degree $0$, it is nonzero
everywhere,  which implies $(i)$.
%

As for the property $(iii)$, consider the nonholonomic system defined by the nilpotent approximation, that is, $\dot z = \sum_{i=1}^m u_i \widehat{X}_i (z)$.
Observe that, if $\widehat{\gamma}$ is a trajectory of this system,
that is, if
$$
\dot{\widehat{\gamma}} (t)= \sum_{i=1}^m u_i \widehat{X}_i
\big(\widehat{\gamma}(t)\big), \quad t
\in [0,T],
$$
then the dilated curve $\delta_{\lambda} \widehat{\gamma}$ satisfies
$$
\frac{d}{dt}\delta_{\lambda} \widehat{\gamma} (t)= \sum_{i=1}^m \lambda u_i
\widehat{X}_i
\big(\delta_{\lambda} \widehat{\gamma}(t)\big), \quad t
\in [0,T].
$$
Thus $\delta_{\lambda} \widehat{\gamma}$ is  a trajectory of the same system, with extremities
$(\delta_{\lambda} \widehat{\gamma}) (0) = \delta_{\lambda}
(\widehat{\gamma} (0))$ and
$(\delta_{\lambda} \widehat{\gamma}) (T) = \delta_{\lambda}
(\widehat{\gamma} (T))$, and its length equals $\lambda {\mathrm{length}}
(\widehat{\gamma})$.  This
proves the homogeneity of $\widehat{d}$.

Finally, since $( \widehat{X}_1, \dots, \widehat{X}_m )$
    satisfies Chow's Condition, the
distance $\widehat{d}(0,\cdot)$ is continuous on $\R^n$ (see Corollary~\ref{le:toposr}).
 We can then choose a real number
$C >0$ such that, on the compact set $\{ \|z \| =
 1 \}$, we have $1/C \leq \widehat{d} (0,z) \leq C$. Both functions
 $\widehat{d}(0,z)$ and  $\|
 z \|$ being homogeneous of degree 1, the inequality of Property
 $(iv)$ follows. 
\end{proof}


\subsection{Distance estimates}
\label{se:distance}
As it is the case for Riemannian distances, in general it is
impossible to compute analytically a sub-Riemannian distance (it would
require to determine all minimizing curves). This is
 very important to obtain estimates of the distance, at least
locally. In a Riemannian manifold $(M,g)$, the situation is rather
simple: in local coordinates $x$ centered at a point $p$, the Riemannian
distance $d_R$ satisfies:
$$
d_R(q,q')= \| x(q) - x(q') \|_{g_p} + o(\| x(q) \|_{g_p}+ \| x(q') \|_{g_p}),
$$
where $\| \cdot \|_{g_p}$ is the Euclidean norm induced by the value
$g_p$ of the metric $g$ at $p$. This formula has two consequences:
first, it shows that the Riemannian distance behaves at the
first-order as the Euclidean distance associated with $\| \cdot
\|_{g_p}$; secondly, the norm $\| \cdot \|_{g_p}$ gives explicit
estimates of $d_R$ near $p$, such as
$$
\frac{1}{C} \| x(q) \|_{g_p} \leq d_R(p,q) \leq C \| x(q) \|_{g_p}.
$$

In sub-Riemannian geometry,  the two properties above hold,
but do not depend on the same function: the first-order behaviour near
$p$ is characterized by the distance $\widehat{d}_p$ defined by a nilpotent
approximation at $p$, whereas explicit local estimates of $d(p,\cdot)$
are given by the pseudo-norm at $p$ $\| \cdot \|_p$ defined
in~\eqref{eq:pseudonorm}. We first present the
latter estimates, often referred to as the ``Ball-Box Theorem'', and
then the
first-order expansion of $d$ in Theorem~\ref{le:dvshatd}.


\begin{theorem}
\label{le:bbox}
The following statement holds if and only if $z_1, \dots, z_n$ are
privileged coordinates at $p$:

\noindent there exist constants $C_p$ and $\eps_p >0$ such that, if
$d(p,q^z) < \eps_p$, then
\begin{eqnarray}
\label{eq:distestim}
\frac{1}{C_p} \|z\|_p \leq d(p,q^z) \leq C_p \| z \|_p
\end{eqnarray}
(as previously, $q^z$ denotes the point near $p$ with coordinates $z$
and  $\| \cdot \|_p$ the pseudo-norm at $p$).
\end{theorem}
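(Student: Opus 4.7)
The plan is to prove the two directions separately, with the forward direction being the main technical work.

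For the forward direction (privileged implies estimate), the lower bound $\|z(q)\|_p \leq C_p d(p, q)$ follows immediately from Proposition~\ref{le:order}: since the coordinates are privileged, $\mathrm{ord}_p(z_j) = w_j$, so $|z_j(q)|^{1/w_j} \leq C' d(p,q)$, and summing over $j$ gives the bound. For the upper bound $d(p, q^z) \leq C_p \|z\|_p$, I would reuse the concatenation machinery from the proof of Lemma~\ref{le:Apopen}, with bracket lengths prescribed by the weights. Pick multi-indices $I_1, \ldots, I_n$ with $|I_j| = w_j$ such that $X_{I_1}(p), \ldots, X_{I_n}(p)$ is an adapted basis of $T_pM$ (possible by the very definition of the weights), and set
\[
\varphi(t_1, \ldots, t_n) = \psi^{I_n}_{t_n} \circ \cdots \circ \psi^{I_1}_{t_1}(p).
\]
By the argument leading to~\eqref{eq:ineqd}, $d(p, \varphi(t)) \leq N \|t\|_p$, so it suffices to invert $\Phi := z \circ \varphi$ near $0$ with the weighted bound $\|\Phi^{-1}(z)\|_p \leq C'' \|z\|_p$.

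For this inversion, the entries of the Jacobian $d\Phi_0$ are $(X_{I_j} z_i)(p) = dz_i(X_{I_j}(p))$, which vanish whenever $w_j < w_i$: indeed privileged coordinates are linearly adapted, so $dz_i(\Delta^{w_i-1}(p)) = 0$, and $X_{I_j}(p) \in \Delta^{w_j}(p) \subseteq \Delta^{w_i-1}(p)$ under this inequality. Thus, after reordering indices by increasing weight, $d\Phi_0$ becomes block triangular, and its diagonal block at weight $w$ represents the map $\Delta^w(p)/\Delta^{w-1}(p) \to \R^{n_w - n_{w-1}}$ induced by $(dz_i)_{w_i = w}$, which is an isomorphism because the $dz_i$ span the annihilator of $\Delta^{w-1}(p)$ inside $\Delta^w(p)^*$. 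A layered back-substitution---solving for the highest-weight coordinates first and propagating downward---yields the weighted estimate $\|t\|_p \leq C'' \|z\|_p$, hence $d(p,q^z) \leq NC''\|z\|_p$.

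For the reverse direction (estimate implies privileged), the assumed lower bound gives $|z_j(q)|^{1/w_j} \leq \|z(q)\|_p \leq C d(p, q)$, whence $\mathrm{ord}_p(z_j) \geq w_j$ for each $j$. On the other hand, the discussion preceding the definition of privileged coordinates shows that any local coordinate system satisfies $\sum_j \mathrm{ord}_p(z_j) \leq \sum_j w_j$. Combining these forces $\mathrm{ord}_p(z_j) = w_j$ for all $j$, i.e.\ the coordinates are privileged.

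The main obstacle is the weighted norm comparison in the upper bound. Since $\varphi$ is merely $C^1$---the flows $\psi^{I_j}_t$ involve fractional powers $t^{1/w_j}$---a direct application of the inverse function theorem yields only a Euclidean bound. The block triangular structure of $d\Phi_0$ is essential: it reduces the inversion to successive inversion of each weight layer, and the privileged property $dz_i(\Delta^{w_i-1}(p)) = 0$ ensures that the nonlinear contributions from higher-weight motions into a given coordinate $z_i$ have weighted order at least $w_i$, so do not spoil the layer-by-layer estimate.
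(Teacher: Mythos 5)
Your handling of the lower bound and of the reverse implication is correct and is exactly the paper's: by Proposition~\ref{le:order} and the general inequality $\sum_j \mathrm{ord}_p(z_j)\le\sum_j w_j$, privilegedness is equivalent to $\|z\|_p\le C_p\,d(p,q^z)$. The gap is in the upper bound $d(p,q^z)\le C_p\|z\|_p$, precisely at the step you yourself flag as the main obstacle. Two concrete problems. First, the only property of the coordinates your inversion argument uses is $dz_i(\Delta^{w_i-1}(p))=0$, which you call ``the privileged property'' but which is only the \emph{linearly adapted} property; the paper's example $X_1=\partial_x$, $X_2=\partial_y+(x^2+y)\partial_z$ gives linearly adapted coordinates that are not privileged, and for them the upper bound is false: the point $q=(0,s,0)$ has $\|q\|_0=|s|$ while $d(0,q)\asymp|s|^{2/3}$ (its privileged coordinate is $\tilde z=z-y^2/2=-s^2/2$). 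So any correct argument must use privilegedness beyond linear adaptedness, and yours does not. Second, the map $\Phi=z\circ\varphi$ is only $C^1$, so subtracting the linear part leaves a remainder that is merely $o(\|t\|_{\R^n})=o(\|t\|_p)$, whereas your layered back-substitution for a coordinate of weight $w_i\ge 2$ needs the contribution to $z_i$ beyond the linear block to be $o(\|t\|_p^{w_i})$ -- a strictly stronger statement. The assertion that ``the nonlinear contributions \dots have weighted order at least $w_i$'' is exactly what must be proved; the expansion $\psi^I_t=\mathrm{id}+tX_I+o(t)$ carries no weighted information about the $o(t)$, and establishing the weighted version requires the Campbell--Hausdorff/push-forward machinery of the appendix (Proposition~\ref{th:quasich}, as used in the proof of Lemma~\ref{ccan}). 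This is the very point where the erroneous published proofs cited before the theorem break down.

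The paper takes a different route that bypasses the inversion entirely: it reduces the upper bound to $d(0,x^0)\le 2\,\widehat{d}(0,x^0)$, where $\widehat{d}$ is the distance of the nilpotent approximation, for which $\widehat{d}(0,z)\asymp\|z\|_p$ by homogeneity (Lemma~\ref{le:nahomo}); that inequality is proved by an iterative correction scheme based on the trajectory comparison of Lemma~\ref{le:x-xhat} (apply the control of a $\widehat{d}$-minimizer to the true system, land at $x^1$ with $\widehat{d}(0,x^1)\le C''\widehat{d}(0,x^0)^{1+1/r}$, iterate, and sum the geometric series). To salvage your composition-of-flows route you would have to replace the $C^1$ inverse function theorem by genuine weighted expansions of $z_i\circ\psi^{I_n}_{t_n}\circ\cdots\circ\psi^{I_1}_{t_1}$, which is essentially the content of Section~\ref{se:proofpriv}.
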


\begin{corollary}[Ball-Box Theorem]
Expressed in a given system of privileged coordinates, the sub-Riemannian
balls $B(p,\eps)$ satisfy, for $\eps < \eps_p$,
$$
\mathrm{Box} \big(\frac{1}{C_p} \eps  \big) \subset B(p,\eps) \subset
\mathrm{Box} \big(C_p \eps  \big) ,
$$
where $\mathrm{Box} ( \eps) = [-\eps^{w_1},\eps^{w_1}] \times \cdots
\times [-\eps^{w_n},\eps^{w_n}]$.
\end{corollary}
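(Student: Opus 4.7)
The plan is to read off the two box inclusions from the two-sided estimate
$$\frac{1}{C_p}\|z\|_p \leq d(p,q^z) \leq C_p \|z\|_p \quad \text{(valid whenever } d(p,q^z) < \eps_p\text{)}$$
of Theorem~\ref{le:bbox}, using only the elementary comparison between the pseudo-norm $\|z\|_p = \sum_i |z_i|^{1/w_i}$ and the box $\mathrm{Box}(\eta) = \prod_i [-\eta^{w_i},\eta^{w_i}]$. Throughout, the constant called $C_p$ in the corollary is allowed to differ (by a bounded factor) from the one in the theorem.

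First I would dispose of the outer inclusion $B(p,\eps) \subset \mathrm{Box}(C_p \eps)$ for $\eps < \eps_p$. If $q^z \in B(p,\eps)$, then $d(p,q^z) < \eps < \eps_p$, so the lower half of Theorem~\ref{le:bbox} applies and gives $\|z\|_p \leq C_p\, d(p,q^z) < C_p \eps$. Since each nonnegative summand satisfies $|z_i|^{1/w_i} \leq \|z\|_p$, this yields $|z_i| < (C_p\eps)^{w_i}$ for every $i$, i.e.\ $z \in \mathrm{Box}(C_p\eps)$.

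The inner inclusion $\mathrm{Box}(\eps/C_p) \subset B(p,\eps)$ is the one requiring care, because the upper bound in Theorem~\ref{le:bbox} is only available under the \emph{a priori} assumption $d(p,q^z) < \eps_p$. I would bootstrap this hypothesis as follows. The map $z \mapsto \|z\|_p$ is continuous and vanishes only at $0$, and $d(p,\cdot)$ is continuous by Corollary~\ref{le:toposr}; hence there exists $\eta_0 > 0$ such that $\|z\|_p \leq n\eta_0$ already implies $d(p,q^z) < \eps_p$. For $z \in \mathrm{Box}(\eps/(nC_p))$ with $\eps$ small enough that $\eps/(nC_p) \leq \eta_0$, one has $\|z\|_p \leq n\cdot \eps/(nC_p) = \eps/C_p$, so the hypothesis of the theorem is met; then the upper estimate gives $d(p,q^z) \leq C_p \|z\|_p \leq \eps$, i.e.\ $q^z \in B(p,\eps)$. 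Relabeling the constant $nC_p$ as $C_p$ (and shrinking $\eps_p$ accordingly) produces the stated inclusion.

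The only genuine subtleties are the factor of $n$ picked up when passing from a componentwise box bound to the sum defining $\|z\|_p$, and the continuity argument needed to justify applying the upper half of Theorem~\ref{le:bbox}; neither is a real obstacle once one permits the corollary's constant $C_p$ to absorb a bounded multiplicative factor.
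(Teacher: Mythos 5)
Your proof is correct and is exactly the intended derivation: the paper states the Ball-Box inclusion as an immediate consequence of the two-sided estimate $\frac{1}{C_p}\|z\|_p \leq d(p,q^z) \leq C_p\|z\|_p$ of Theorem~\ref{le:bbox}, via the elementary comparison between the pseudo-norm and the weighted box, which is what you carry out. Your bootstrapping of the a priori hypothesis $d(p,q^z)<\eps_p$ (using continuity of $d$ from Corollary~\ref{le:toposr}) and the factor-of-$n$ bookkeeping are details the paper leaves implicit, and you handle them correctly.
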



\begin{remark}
\label{pa:bbox_nonuniform}
The constants $C_p$ and $\eps_p$ depend on the base point $p$.
Around a regular point $p_0$, it is possible to construct  systems
of privileged coordinates depending continuously on the base point
$p$. In this case, the corresponding constants $C_p$ and $\eps_p$
depend continuously on $p$.  This is no longer true at a singular point.
In particular, if $p_0$ is singular, the
estimate~(\ref{eq:distestim}) does not hold uniformly near $p_0$:
we can not choose the constants $C_p$ and $\eps_p$ independently
on $p$ near $p_0$. We will see in section~\ref{se:desing}  uniform
versions of the
Ball-Box Theorem.
\end{remark}
\medskip

The Ball-Box Theorem is stated in different papers, often under the
hypothesis that the point $p$ is regular. To our knowledge, two
valid proofs exist, the ones in~\cite{nag85} and in~\cite{bel96}. The
result also appears without proof in~\cite{gro96} and
in~\cite{ger84}, and with erroneous proofs in~\cite{mit85}
and in~\cite{mon02}.

We present here a proof adapted from the one of Bella\"{\i}che (our is
much simpler because Bella\"{\i}che actually proves a more general
result, namely (\ref{eq:complete_bel})). Basically, the idea is to
compare the distances $d$ and $\widehat{d}$. The main step is
Lemma~\ref{le:x-xhat} below, which is essential in other respects to explain the
role of nilpotent approximations in control theory.\medskip

Fix  a point $p\in M$, and a system of privileged
coordinates at $p$. Through these coordinates we identify a
neighbourhood of $p$ in $M$ with a neighbourhood of 0 in $\R^n$.
As in the preceding subsection, we denote by $\widehat{X}_1, \dots, \widehat{X}_m$ the homogeneous
nilpotent approximation of $X_1, \dots, X_m$ at $p$
(associated with the given system of privileged coordinates) and by $\widehat{d}$ the
induced sub-Riemannian distance on $\R^n$. Recall also that $r=w_n$
denotes the
degree of nonholonomy at $p$.

\begin{lemma}
\label{le:x-xhat}
There exist constants $C$ and $\eps >0$ such that, for any $x_0 \in
\R^n$ and any $t \in \R^+$ with $\tau = \max (\|x_0\|_p,t) < \eps$, we
have
$$
\| x(t) - \widehat{x}(t)  \|_p \leq C \tau t^{1/r},
$$
where $x(\cdot)$ and $\widehat{x}(\cdot)$ are trajectories of the nonholonomic
systems defined respectively by $X_1, \dots, X_m$ and $\widehat{X}_1, \dots, \widehat{X}_m$, starting at
the same point $x_0$, associated with the same control function
$u(\cdot)$, and satisfying $\|u(t)\|=
1$ a.e.
\end{lemma}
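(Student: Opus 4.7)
The plan is to use the dilation $\delta_{1/\tau}$ to rescale both trajectories onto a fixed bounded region of $\R^n$, on which the original system becomes an $O(\tau)$-perturbation (in $C^1$) of the nilpotent approximation, and then to apply Gronwall.

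First I would set $\tau=\max(\|x_0\|_p,t)$ and pass to the rescaled trajectories $y(s)=\delta_{1/\tau}(x(\tau s))$ and $\widehat{y}(s)=\delta_{1/\tau}(\widehat{x}(\tau s))$ for $s\in[0,t/\tau]\subset[0,1]$. A direct computation shows that, because $\widehat{X}_i$ is homogeneous of degree $-1$, $\widehat{y}$ is again a trajectory of $\sum u_i\widehat{X}_i$ (with the reparametrized unit-norm control $u(\tau\,\cdot)$), whereas $y$ is a trajectory of $\sum u_i\widetilde{X}_i^\tau$ with $\widetilde{X}_i^\tau:=\tau\,\delta_\tau^* X_i$. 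Both start at the common point $\delta_{1/\tau}(x_0)$, of pseudo-norm $\|x_0\|_p/\tau\le 1$, which therefore sits in a fixed Euclidean ball.

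The central estimate is that $\widetilde{X}_i^\tau=\widehat{X}_i+O(\tau)$ in $C^1$ on every bounded set. Writing $X_i=\widehat{X}_i+R_i$, the remainder has nonholonomic order $\ge 0$ at $p$, so by Proposition~\ref{le:algorder} its Taylor series consists of monomial vector fields of weighted degree $\ge 0$; pulling back a degree-$d$ monomial by $\delta_\tau$ multiplies it by $\tau^d$, so $\tau\,\delta_\tau^* R_i$ carries on each term an overall factor $\tau^{d+1}$ with $d\ge 0$, which yields the desired $O(\tau)$ bound (and the same bound for its first derivatives). Lemma~\ref{le:nahomo}$(iv)$ and the homogeneity of $\widehat{d}$ confine $\widehat{y}$ to a fixed compact set on $s\in[0,1]$, and the perturbation bound then confines $y$ there as well provided $\tau$ is below a suitable threshold $\eps$.

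A standard Gronwall estimate then yields $\|y(s)-\widehat{y}(s)\|_{\R^n}\le C\tau s$ on $[0,1]$. Since $\delta_\tau$ acts linearly in each coordinate, $x(t)-\widehat{x}(t)=\delta_\tau(y(t/\tau)-\widehat{y}(t/\tau))$ and $\|\delta_\tau z\|_p=\tau\|z\|_p$; combined with the elementary bound $\|z\|_p\le C'\|z\|_{\R^n}^{1/r}$ on a bounded neighbourhood of $0$ (valid because $w_i\le r$ for every $i$), this gives
$$
\|x(t)-\widehat{x}(t)\|_p \;=\; \tau\,\|y(s)-\widehat{y}(s)\|_p \;\le\; C''\,\tau\,(\tau s)^{1/r} \;=\; C''\,\tau\,t^{1/r}.
$$
The main obstacle is keeping the $C^1$ bound on $\widetilde{X}_i^\tau-\widehat{X}_i$, and hence the Gronwall constants, uniform in $\tau$; privileged coordinates are exactly what makes this possible.
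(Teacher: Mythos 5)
Your argument is correct, but it follows a genuinely different route from the paper's. The paper works directly in the original (unscaled) coordinates: it first derives the a priori bound $\|x(t)\|_p \leq \mathit{Cst}\,\tau$ by integrating $|\dot x_j|\leq \mathit{Cst}\,\|x\|_p^{w_j-1}$ via a smooth surrogate of the pseudo-norm, and then runs an induction on the weights, exploiting the triangular structure of the system in privileged coordinates ($\dot x_j-\dot{\widehat{x}}_j$ depends on $x_k-\widehat{x}_k$ only for $w_k<w_j$, plus a remainder of order $\geq w_j$) to get $|x_j(t)-\widehat{x}_j(t)|\leq \mathit{Cst}\,\tau^{w_j}t$ coordinate by coordinate. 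Your blow-up by $\delta_{1/\tau}$ replaces that weight-by-weight induction with a single Gronwall estimate on a fixed compact set, which is conceptually cleaner and makes explicit the fact that the nilpotent approximation is the $C^1$-limit of the rescaled fields $\tau\,\delta_\tau^* X_i$ as $\tau\to 0$ --- a statement useful in its own right (e.g.\ for the convergence of rescaled metric spaces in Section~\ref{se:tangentcone}). Note that, once unwound, your bound $\|y(s)-\widehat{y}(s)\|_{\R^n}\leq C\tau s$ gives back exactly the paper's coordinate-wise estimate $|x_j(t)-\widehat{x}_j(t)|\leq C\tau^{w_j}t$. The price you pay is the rigorous justification of $\tau\,\delta_\tau^* R_i=O(\tau)$ in $C^1$: since $R_i$ is only smooth, you must split it into its weighted-degree-$\geq 0$ Taylor polynomial (handled by the monomial computation via Proposition~\ref{le:algorder}) and a remainder vanishing to Euclidean order $>r$ at the origin, whose rescaling is also $O(\tau)$ on bounded sets; you should also say explicitly that for Gronwall it suffices to combine the $C^0$ bound on the perturbation with a Lipschitz constant for the fixed polynomial fields $\widehat{X}_i$. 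With those routine points spelled out, and the bootstrapping that keeps $y$ in the compact set (hence $x(\tau s)=\delta_\tau y(s)$ inside the coordinate neighbourhood), the proof is complete.
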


\begin{proof}
The first step is to prove that
$\| x (t) \|_p$ and $\| \widehat{x} (t) \|_p \leq \mathit{Cst}\,  \tau$ for small enough
$\tau$, where $\mathit{Cst}$ is a constant. Let us do it for $x(t)$,
the proof being exactly the same for
$\widehat{x}(t)$.

The equation of  the control system associated with
$X_1, \dots, X_m$ is
$$
\dot x_j = \sum_{i=1}^m u_i \big( f_{ij}(x) + r_{ij}(x) \big),
\quad j=1, \dots,n,
$$
where $f_{ij}(x) + r_{ij}(x)$ is of order $\leq w_j -1$ at 0.
Thus, for $j=1, \dots,n$ and $i=1, \dots,m$, $|f_{ij}(x) + r_{ij}(x)| \leq \mathit{Cst}\,  \| x \|_p^{w_j-1}$ when  $\| x \|_p$ is small
enough. Note that, along the trajectory $x(t)$,  $\|x\|_p$ is small when $\tau$ is. Since $\|u(t)\|=
1$ a.e., we get:
\begin{eqnarray}
\label{eq:dotxj}
| \dot x_j | \leq \mathit{Cst}\,  \| x \|_p^{w_j-1}.
\end{eqnarray}

To integrate this inequality, choose an integer $N$ such that all
$N/w_j$ are even integers and set $\| x \|_N= \big( \sum_{i=1}^n
|x_i|^{N/w_i} \big)^{1/N}$. The function $\| x \|_N$ is equivalent
to $\|x\|_p$ in the norm sense, and it is differentiable except at the
origin. Inequality~(\ref{eq:dotxj}) implies
$\frac{d}{dt}\| x \|_N \leq
\mathit{Cst}\, $, and then, by integration,
$$
\| x(t) \|_N \leq \mathit{Cst}\,  t + \| x (0)\|_N \leq \mathit{Cst}\,
\tau.
$$
The functions $\| x \|_N$ and $\|x\|_p$ being equivalent, we
obtain, for a trajectory starting at $x_0$, $\| x (t)\|_p \leq \mathit{Cst}\,
 \tau$ when $\tau$ is small enough. \medskip

The second step is to prove  $|x_j(t) - \widehat{x}_j(t)| \leq \mathit{Cst}\,
\tau^{w_j}t$. The function $x_j - \widehat{x}_j$ satisfies the
differential equation
\begin{eqnarray*}
\dot x_j - \dot{\widehat{x}_j} & = &  \sum_{i=1}^m u_i \big( f_{ij}(x) -
f_{ij}(\widehat{x})  + r_{ij}(x)
\big), \\
& = &  \sum_{i=1}^m u_i \big( \sum_{\{ k \, : \, w_k <w_j\}} (x_k
- \widehat{x}_k) Q_{ijk}(x, \widehat{x}) + r_{ij}(x)
\big),
\end{eqnarray*}
where $Q_{ijk}(x, \widehat{x})$ is a homogeneous polynomial of weighted
degree $w_j-w_k-1$. For $\|x\|_p$ and $\| \widehat{x} \|_p$ small enough, we
have
$$
|r_{ij}(x)| \leq \mathit{Cst}\,  \|x\|_p^{w_j} \quad \hbox{and} \quad
|Q_{ijk}(x, \widehat{x})| \leq \mathit{Cst}\,  (\|x\|_p + \|\widehat{x}\|_p)^{w_j-w_k-1}.
$$
Using the inequalities of the first step, we obtain finally,
for $\tau$ small enough,
\begin{eqnarray}
\label{eq:xj-hatxj}
|\dot x_j (t) - \dot{\widehat{x}}_j(t)| \leq \mathit{Cst}\,   \sum_{\{ k \, : \, w_k
<w_j\}} |x_k (t)- \widehat{x}_k (t)| \tau^{w_j-w_k-1} + \mathit{Cst}\,  \tau^{w_j}.
\end{eqnarray}

This system of inequalities has a triangular form, hence it can be
integrated iteratively. For $w_j=1$,
the inequality is $|\dot
x_j (t) - \dot{\widehat{x}}_j(t)| \leq \mathit{Cst}\,  \tau$, and so $|x_j (t) -
\widehat{x}_j(t)| \leq \mathit{Cst}\,  \tau t$. By induction, let $j > n_1$ and
assume $|x_k (t) - \widehat{x}_k(t)| \leq \mathit{Cst}\,  \tau^{w_k} t$ for $k<j$.
Inequality~(\ref{eq:xj-hatxj}) implies
$$
|\dot x_j (t) - \dot{\widehat{x}}_j(t)| \leq \mathit{Cst}\,  \tau^{w_j-1} t + \mathit{Cst}\,
\tau^{w_j} \leq  \mathit{Cst}\,  \tau^{w_j},
$$
and so $|x_j (t) - \widehat{x}_j(t)| \leq \mathit{Cst}\,  \tau^{w_j} t$.

Finally,
$$
\| x(t) - \widehat{x} (t) \|_p \leq \mathit{Cst}\,  \tau (t^{1/w_1} + \cdots +
t^{1/w_n}) \leq \mathit{Cst}\,  \tau t^{1/r},
$$
which completes the proof of the lemma. 
\end{proof}

\begin{proof}[Proof of Theorem~\ref{le:bbox}]
Observe first that, by definition of order, a system of
coordinates $z$ is privileged if and only if $d(p,q^z) \geq \mathit{Cst}\,
\| z\|_p$. What remains to prove is that, if $z$ are privileged
coordinates, then $d(p,q^z) \leq \mathit{Cst}\,  \| z\|_p$.

We will show that, for $\|x^0\|_p$ small enough,
$$
d(0,x^0) \leq 2 \widehat{d} (0,x^0),
$$
and so $d(0,x^0) \leq \mathit{Cst}\,  \|x^0\|_p$ by
Lemma~\ref{le:nahomo}. This will prove Theorem~\ref{le:bbox}.\medskip

Fix $x^0 \in \R^n$, $\|x^0\|_p < \eps$. Let $\widehat{x}_0 (t)$, $t \in
[0,T_0]$, be a minimizing curve for $\widehat{d}$, having velocity one,
and joining $x^0$ to 0. According to Remark~\ref{re:velocity1}, such a
curve exists, and there exists a control $u_0(\cdot)$
associated with $\widehat{x}_0$ such that $\|u_0(t)\|=1$
a.e. Moreover, $T_0=\widehat{d} (0,x^0)$.

Let
$x_0(t)$, $t \in [0,T_0]$, be the
trajectory of the control system associated with $X_1, \dots, X_m$ starting at
$x^0$ and defined by $u_0(\cdot)$. We have ${\mathrm{length}} \big(x_0
(\cdot)\big) \leq T_0$.
Set $x^1 = x_0(T_0)$. By
Lemma~\ref{le:x-xhat},
$$
\|x^1\|_p = \| x_0(T_0) - \widehat{x}_0 (T_0) \|_p \leq C \tau T_0^{1/r},
$$
where $\tau = \max (\|x^0\|_p, T_0)$. By Lemma~\ref{le:nahomo},
$T_0=\widehat{d} (0,x^0)$ satisfies $T_0 \geq \|x^0\|_p / C'$, so $\tau \leq
C' T_0$, and
$$
\widehat{d} (0,x^1) \leq C'\|x^1\|_p \leq C''  \widehat{d} (0,x^0)^{1+1/r},
$$
with $C''=C'^2 C$.

Choose now $\widehat{x}_1 (t)$, $t \in [0,T_1]$, a minimizing curve for
$\widehat{d}$ of velocity one joining $x^1$ to 0. There exists a
control $u_1(\cdot)$  associated with $\widehat{x}_1$ such that $\|u_1(t)\|=1$
a.e. Let $x_1(t)$,
$t \in [0,T_1]$, be the trajectory of the control system
associated with $X_1, \dots, X_m$ starting at $x^1$ and defined by $u_1(\cdot)$. Set $x^2 = x_1(T_1)$. As
previously, we have ${\mathrm{length}} \big(x_1(\cdot)\big)= \widehat{d} (0,x^1)$ and
$\widehat{d} (0,x^2) \leq C'' \widehat{d} (0,x^1)^{1+1/r}$.

Repeating this construction, we obtain a sequence  of points $x^0, x^1, x^2,
\dots$ such that $\widehat{d} (0,x^{k+1}) \leq C'' \widehat{d}
(0,x^k)^{1+1/r}$, and a sequence of trajectories $x_k(\cdot)$ joining $x^k$
to $x^{k+1}$ of length equal to $\widehat{d} (0,x^k)$.

Taking $\|x^0\|_p$ small enough, we can assume $C''
\widehat{d}(0,x^0)^{1/r} \leq 1/2$. We have then $\widehat{d} (0,x^1)  \leq
\widehat{d}(0,x^0)/2$, \dots, $\widehat{d} (0,x^{k}) \leq \widehat{d}(0,x^0)/2^k$,\dots \
Consequently, $x^k$ tends to 0 as $k\rightarrow +\infty$. Putting end to end the curves $x_k(\cdot)$ gives a trajectory joining $x^0$ to 0 of length
$\widehat{d}(0,x^0) + \widehat{d}(0,x^1) +
\cdots \leq 2 \widehat{d}(0,x^0)$. This implies $d(0,x^0) \leq 2
\widehat{d}(0,x^0)$, and the proof is complete.
\end{proof}
\medskip

Now, the distance $\widehat{d}$ on $\R^n$ induces a distance
$\widehat{d}_p$ on a neighbourhood of $p$ in $M$ by setting
$\widehat{d}_p(q,q') = \widehat{d}(z(q),z(q'))$. This distance gives
the first-order term in the expansion of $d(p,\cdot)$.

\begin{theorem}
\label{le:dvshatd}
On a neighbourhood of $p$ in $M$ there holds
$$
d(p,q) = \widehat{d}_p(p,q) \left(1+
  O\left(\widehat{d}_p(p,q)\right)\right)  .
$$
\end{theorem}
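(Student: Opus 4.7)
The plan is to compare trajectories of the original system and its nilpotent approximation that share the same control, and to convert the discrepancy into a distance bound via Lemma~\ref{le:x-xhat}, Lemma~\ref{le:nahomo}, and the Ball-Box Theorem~\ref{le:bbox}. I work throughout in the system of privileged coordinates $(z_1,\dots,z_n)$ underlying $\widehat{d}_p$, identifying a neighbourhood of $p$ in $M$ with a neighbourhood of $0$ in $\R^n$, so that $\widehat{d}_p(p,q)=\widehat{d}(0,z(q))$ and the claim reduces to showing
\[
d(0,z) = \widehat{d}(0,z) + O\!\left(\widehat{d}(0,z)^{1+1/r}\right)
\]
for $z=z(q)$ close to $0$, which yields the stated asymptotic.

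\emph{Upper bound.} Set $T=\widehat{d}(0,z)$. Since $\widehat{X}_1,\dots,\widehat{X}_m$ satisfies Chow's condition on $\R^n$ by Lemma~\ref{le:nahomo}(i), one can choose a velocity-one $\widehat{d}$-minimizing trajectory from $z$ to $0$ with control $u(\cdot)$, $\|u\|=1$ a.e. Apply the same $u(\cdot)$ to the original system starting at $z$, and let $y$ be the endpoint at time $T$; then the trajectory has length $T$, so $d(z,y)\le T$. Lemma~\ref{le:x-xhat} gives $\|y\|_p=\|x(T)-\widehat{x}(T)\|_p\le C\tau T^{1/r}$ with $\tau=\max(\|z\|_p,T)$; Lemma~\ref{le:nahomo}(iv) ensures $\|z\|_p\le C\widehat{d}(0,z)=CT$, so $\tau\le C'T$ and $\|y\|_p\le C''T^{1+1/r}$. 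The Ball-Box Theorem then yields $d(0,y)\le C_p\|y\|_p\le C'''T^{1+1/r}$, and the triangle inequality gives
\[
d(0,z)\le d(z,y)+d(y,0)\le T+C'''T^{1+1/r}.
\]

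\emph{Lower bound.} The argument is symmetric, with the roles of the two systems swapped. Set $L=d(0,z)$ and choose a velocity-one $d$-minimizing trajectory from $z$ to $0$ with control $u(\cdot)$ (via Remark~\ref{re:velocity1}), then apply $u(\cdot)$ to the nilpotent approximation starting at $z$ to obtain an endpoint $\widehat{y}$ at time $L$ with $\widehat{d}(z,\widehat{y})\le L$. The Ball-Box Theorem provides $\|z\|_p\le C_pL$, so Lemma~\ref{le:x-xhat} yields $\|\widehat{y}\|_p\le CL^{1+1/r}$, and Lemma~\ref{le:nahomo}(iv) converts this into $\widehat{d}(0,\widehat{y})\le C'L^{1+1/r}$. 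A further triangle inequality gives $\widehat{d}(0,z)\le L+C'L^{1+1/r}$.

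Combining both bounds yields $|d(0,z)-\widehat{d}(0,z)|=O(\widehat{d}(0,z)^{1+1/r})$, which is the stated first-order expansion of $d$ in terms of $\widehat{d}_p$. The main technical point is to check that the hypotheses of Lemma~\ref{le:x-xhat} hold along the entire interval $[0,T]$ (resp.\ $[0,L]$): its own proof shows that $\|x(t)\|_p$ and $\|\widehat{x}(t)\|_p$ stay bounded by $\mathit{Cst}\,\tau$, and since $\tau$ is comparable to $T$ (resp.\ $L$) and tends to $0$ with $z$, the comparison is valid for all $z$ close enough to $0$. Everything else is routine bookkeeping.
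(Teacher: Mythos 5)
Your proof is correct, and it rests on the same engine as the paper's --- Lemma~\ref{le:x-xhat} applied twice, once in each direction, with the roles of the original system and the nilpotent approximation swapped --- but you finish differently. The paper does not stop after one comparison step: for the upper bound it iterates the construction, producing points $x^0,x^1,x^2,\dots$ with $\widehat{d}(0,x^{k+1})\le C''\widehat{d}(0,x^k)^{1+1/r}$, concatenates the resulting trajectories, and sums the series to get length $\le(1+\delta)\widehat{d}(0,x^0)$; similarly for the lower bound with trajectories of the nilpotent system. You instead perform a single comparison step and then close the gap $d(0,y)$ (resp. $\widehat{d}(0,\widehat y)$) by invoking the already-proven Theorem~\ref{le:bbox} together with Lemma~\ref{le:nahomo}(iv) and the triangle inequality. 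This is a legitimate and arguably cleaner route: the paper's infinite iteration is forced in the proof of Theorem~\ref{le:bbox} itself, where the two-sided equivalence $d\asymp\|\cdot\|_p$ is not yet available, but once that theorem is in hand a one-step argument suffices, and it even yields the sharper explicit bound $|d-\widehat{d}|=O(\widehat{d}^{\,1+1/r})$, consistent with Bella\"{\i}che's estimate~(\ref{eq:complete_bel}) specialized to $q'=p$.

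One small caveat, which applies equally to the paper's own proof: what you establish is $d=\widehat{d}_p\bigl(1+O(\widehat{d}_p^{\,1/r})\bigr)$, which for $r\ge 2$ is formally weaker than the literal statement $d=\widehat{d}_p\bigl(1+O(\widehat{d}_p)\bigr)$. The paper's argument only yields the $(1\pm\delta)$ form, i.e. $d=\widehat{d}_p(1+o(1))$, and Remark~\ref{re:bboxhatd} confirms that this is the intended content of the theorem; so this discrepancy is in the statement, not in your argument. Your handling of the applicability of Lemma~\ref{le:x-xhat} (checking that $\tau$ stays below the lemma's threshold because it is comparable to $T$, resp. $L$) is also the right point to flag and is dealt with correctly.
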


\begin{remark}
 \label{re:bboxhatd}
By Theorem~\ref{le:bbox} and Lemma~\ref{le:nahomo},  when $d(p,q)$ is
small enough we get the estimate
\begin{equation}
  \label{eq:bboxhatd}
  \frac{1}{C} \widehat{d}_p(p,q) \leq d(p,q) \leq C
  \widehat{d}_p(p,q),
\end{equation}
where $C$ is some positive constant. Theorem~\ref{le:dvshatd} essentially
states that
this constant can be chosen arbitrarily close to $1$.
\end{remark}

\begin{proof}
Fix $\delta >0$. We have to prove that there exists $\eps >0$ such
that, if $d(p,q) <
\eps$, then
\begin{equation}
  \label{eq:dvsdhat}
  (1-\delta) \widehat{d}_p(p,q) \leq d(p,q) \leq (1+\delta)
  \widehat{d}_p(p,q).
\end{equation}

Let $q$ be a point in $M$.
 Setting $x^0 =q$
in the proof  of
Theorem~\ref{le:bbox} furnishes a trajectory joining $q$ to 0 which
length is equal to $\sum_{k=0}^\infty \widehat{d}(0,x^k)$, the points
$x^k$ being such that
$\widehat{d} (0,x^{k+1}) \leq C'' \widehat{d}
(0,x^k)^{1+1/r}$.

From~\eqref{eq:bboxhatd}, there exists $\eps >0$ such that $d(p,q) <
\eps$ implies $C'' \widehat{d}(0,x^0)^{1/r} \leq \delta / (1 +
\delta)$. In this case the trajectory from $q$ to $0$ is of length not greater
than $(1 + \delta) \widehat{d}(0,x^0)$ and we have
$$
 d(p,q) \leq (1+\delta)
  \widehat{d}_p(p,q).
$$

To prove the other inequality in~\eqref{eq:dvsdhat}, we use the
same argument but reverse the role of $d$ and $\widehat{d}$. Let $x_0(t)$, $t \in
[0,T_0]$, be a minimizing curve for $d$ of velocity one joining $x^0$ to 0, and let $u_0(\cdot)$ be a control
associated with $x_0(\cdot)$ such that $\|u_0(t)\|=1$
a.e. We have $T_0={d} (0,x^0)$. Let $\widehat{x}_0(t)$, $t \in [0,T_0]$, be the
trajectory of the control system associated with $\widehat{X}_1, \dots,
\widehat{X}_m$ starting at
$x^0$ and defined by the control $u_0(\cdot)$. In particular, ${\mathrm{length}} \big(x_0
(\cdot)\big) \leq T_0$.

Set $x^1 = x_0(T_0)$. By
Lemma~\ref{le:x-xhat},
$$
\|x^1\|_p = \| x_0(T_0) - \widehat{x}_0 (T_0) \|_p \leq C \tau T_0^{1/r},
$$
where $\tau = \max (\|x^0\|_p, T_0)$. Theorem~\ref{le:bbox} implies  $\tau \leq
C_p T_0$, and
$$
{d} (0,x^1) \leq C_p \|x^1\|_p \leq C''  {d} (0,x^0)^{1+1/r},
$$
with $C''=C_p^2 C$.

Repeating this construction gives a trajectory of
$\widehat{X}_1, \dots, \widehat{X}_m$
joining $q$ to $p$  whose
length is equal to $\sum_{k=0}^\infty {d}(0,x^k)$, where
${d} (0,x^{k+1}) \leq C'' {d}
(0,x^k)^{1+1/r}$.

For $d(p,q)$ small enough, we have  $C'' {d}(0,x^0)^{1/r} \leq \delta / (1 -
\delta)$ and the trajectory from $q$ to $0$ is of length $\leq 1/(1 - \delta) {d}(0,x^0)$, which leads to
$$
 \widehat{d}_p(p,q) \leq \frac{1}{(1 - \delta)}
 d(p,q).
$$
This completes the proof.
\end{proof}

\subsection{Approximate motion planning}
Given a control system $(\Sigma)$, the {\em motion planning
problem} is to steer $(\Sigma)$ from an initial point to a final
point. For nonholonomic systems, the exact problem is in general
unsolvable. However methods exist for a particular class of systems,
namely for nilpotent (or nilpotentizable) systems. It is
then of interest to devise approximate motion planning techniques
based on nilpotent approximations. These techniques are Newton
type methods, the nilpotent approximation playing the role of the
usual linearization.

Precisely, consider a nonholonomic control system
$$
(\Sigma) \ : \qquad \dot x = \sum_{i=1}^m u_i X_i(x), \quad x \in
\R^n,
$$
and initial and final points $a$ and $b$ in $\R^n$. Denote by
$\widehat{X}_1, \dots, \widehat{X}_m$ a nilpotent approximation of $X_1, \dots, X_m$ at $b$. The $k$-step of
an approximate motion planning algorithm take the following form
($x^k$ denotes the state of the system, $x^0$ being the initial point $a$):
\begin{enumerate}
  \item compute a control $u(t)$, $t \in [0,T]$, steering the
  control system associated with $\widehat{X}_1, \dots, \widehat{X}_m$ from $x^k$ to $b$;
  \item compute the trajectory $x(\cdot)$ of  $(\Sigma)$ with
  control $u(\cdot)$ starting from  $x^k$;
  \item set $x^{k+1}=x(T)$.
\end{enumerate}

Is this algorithm convergent or,
at least, locally convergent? The answer to the latter question
is positive under an extra hypothesis on the control
 given in point 2 of the algorithm, namely,

\begin{itemize}
  \item[(H)] \ there exists a constant $K$ such that, if $x^k$ and
  $b$ are close enough, then
  $$
\int_0^T \|u(t)\| dt \leq K \widehat{d} (b, x_k).
  $$
\end{itemize}
Note that a control corresponding to a minimizing curve for $\widehat{d}$
satisfies this condition. Other standards methods using Lie groups
(like the one in~\cite{laf91}) or based on the triangular form of
the homogeneous nilpotent approximation also satisfy  this hypothesis.

The local convergence is then proved exactly in the same way than
Theorem~\ref{le:bbox}. We normalize first the control, so that $\|u(t)\| = 1$ a.e. Then from Lemmas~\ref{le:x-xhat}
and~\ref{le:nahomo}, we have $ \widehat{d}(b,x^{k+1}) \leq C'' T^{1+1/r},
$ and using hypothesis (H), we obtain
$$
\widehat{d}(b,x^{k+1}) \leq
C''K^{1+1/r}  \widehat{d} (b, x_k)^{1+1/r}.
$$
If $a$ is close enough to $b$, we have, at each step of the
algorithm, $\widehat{d}(b,x^{k+1}) \leq  \widehat{d} (b, x_k) /2$, which proves
the local convergence of the algorithm. In other words,
%
{\em for each point $b \in M$, there exists a constant
$\eps_b>0$ such that, if $d(a,b) < \eps_b$, then the  approximate
motion planning algorithm steering the system from $a$ to $b$
converges.}\medskip

To obtain a globally convergent algorithm, a natural idea is to
iterate the locally convergent one. This requires the construction
of a finite sequence of intermediate goals $b_0=a, b_1, \dots,
b_N=b$ such that $d(b_{i-1},b_i) < \eps_{b_i}$. However the
constant $\eps_b$ depends on $b$ and, as already noticed for
Theorem~\ref{le:bbox}, it is not possible to have a uniform
nonzero constant near singular points. Thus this method may provide  a
globally convergent algorithm only when every point is regular.


\section{Tangent structure to Carnot-Carath\'{e}odory
  spaces}
\label{se:tgtstruct}
Consider a  manifold $M$ endowed with a
sub-Riemannian distance $d$ on $M$. The so-defined
metric space $(M,d)$ is called a \emph{Carnot-Carath\'{e}odory
  space}.
  The notion of first-order approximation introduced in the previous section has a  metric interpretation and will allow us to describe the local
structure of a Carnot-Carath\'{e}odory
  space.

\subsection{Metric tangent space}
\label{se:tangentcone}
In describing the tangent space to a manifold, we
essentially look at smaller and smaller neighbourhoods of a
given point, the manifold being fixed. Equivalently, we can
 look at a fixed neighbourhood, but expanding the
manifold. As noticed by Gromov, this idea can be used to define a notion of tangent space for a general metric
space.

If $\textsf{X}$ is a metric space with distance $d$, we define $\lambda
\textsf{X}$, for $\lambda
>0$, to be  the metric space with same underlying set
 as $\textsf{X}$ and distance $\lambda d$. A {\em pointed metric
space} $(\textsf{X},x)$ is a metric space with a distinguished point $x$.

Loosely speaking, a  metric tangent space  to the metric space $\textsf{X}$
at $x$ is a pointed metric space $(C_x\textsf{X},y)$ such that
$$
(C_x\textsf{X},y) = \lim_{\lambda \rightarrow + \infty} (\lambda \textsf{X},x).
$$
Of course, for this definition to make sense, we have to define the
limit of pointed metric spaces.

Let us first define the Gromov-Hausdorff distance between metric
spaces. Recall that, in a metric space $\textsf{X}$, the Hausdorff
distance $\hbox{H-dist} (A,B)$ between two subsets $A$ and $B$ of
$\textsf{X}$ is the infimum of $\rho>0$ such that any point of $A$ is
within a distance $\rho$ of $B$ and any point of $B$ is within a
distance $\rho$ of $A$. The Gromov-Hausdorff distance
$\hbox{GH-dist} (X,Y)$ between two metric spaces $\textsf{X}$ and $\textsf{Y}$
is the infimum of Hausdorff distances $\hbox{H-dist}
(i(\textsf{X}),j(\textsf{Y}))$ over all metric spaces $\textsf{Z}$ and all isometric
embeddings $i : \textsf{X} \rightarrow \textsf{Z}$, $j: \textsf{Y} \rightarrow \textsf{Z}$.

Thanks to Gromov-Hausdorff distance, one can define the notion of
limit of a sequence of pointed metric spaces: $(\textsf{X}_n,x_n)$
converge to $(\textsf{X},x)$ if, for any positive $r$,
$$
\hbox{GH-dist} \big( B^{\textsf{X}_n}(x_n,r), B^\textsf{X}(x,r) \big)
\rightarrow 0  \quad \hbox{as} \ n \rightarrow + \infty
$$
where $B^\textsf{Y}(y,r)$ is considered as a metric space, endowed with the distance of $Y$.
Note that all pointed metric spaces isometric to $(\textsf{X},x)$ are also limit of $(\textsf{X}_n,x_n)$.
However the limit is unique up to an isometry provided the closed balls around the distinguished point are compact~\cite[Sect.\ 7.4]{Burago2001}.

Finally, one says that $(\textsf{X}_\lambda,x_\lambda)$
converge to $(\textsf{X},x)$  when $\lambda \to \infty$ if, for every sequence $\lambda_n$,
$(\textsf{X}_{\lambda_n},x_{\lambda_n})$ converge to $(\textsf{X},x)$.

\begin{definition}
A pointed metric space $(C_x\textsf{X},y)$ is a {\em metric tangent space}  to the metric space $\textsf{X}$
at $x$ if $(\lambda \textsf{X},x)$ converge to $(C_x\textsf{X},y)$ as $\lambda \rightarrow + \infty$.
If it exists, it is unique up to an isometry provided the closed balls around $x$ in $(\lambda \textsf{X},x)$  are compact.
\end{definition}

For a Riemannian metric space $(M,d_R)$ induced by a Riemannian metric $g$ on a manifold $M$,
metric tangent spaces at a
point $p$ exist and are isometric to the Euclidean space $(T_pM, g_p)$, that
is, the standard tangent space endowed with the scalar product
defined by the quadratic form $g_p$.

For a Carnot-Carath\'{e}odory space $(M,d)$,
the metric tangent space is given by the
nilpotent approximation.

\begin{theorem}
\label{th:tangentcone}
A Carnot-Carath\'{e}odory space $(M,d)$ admits metric tangent spaces $(C_pM,y)$ at
every point $p\in M$. The space $C_pM$ is itself a Carnot-Carath\'{e}odory space isometric
to $(\R^n,\widehat{d})$, where $\widehat{d}$ is the sub-Riemannian distance associated with a
homogeneous nilpotent approximation at $p$.
\end{theorem}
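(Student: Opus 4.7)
The plan is to fix a system of privileged coordinates $z$ at $p$, identify a neighbourhood of $p$ with a neighbourhood of $0 \in \R^n$, and show that the candidate tangent space is $(\R^n, \widehat{d})$ pointed at $0$, where $\widehat{d}$ is the sub-Riemannian distance associated with the homogeneous nilpotent approximation in these coordinates. The natural map providing the Gromov--Hausdorff correspondence will be the rescaling $\Phi_\lambda := \delta_\lambda \circ z$, which takes $B^M(p,r/\lambda)$ into a bounded subset of $\R^n$ thanks to the Ball-Box theorem and the homogeneity $\|\delta_\lambda x\|_p = \lambda \|x\|_p$.

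The key technical step will be to upgrade Theorem~\ref{le:dvshatd} from a comparison of $d(p,\cdot)$ and $\widehat{d}_p(p,\cdot)$ to a comparison of $d(q_1,q_2)$ and $\widehat{d}_p(q_1,q_2)$ for arbitrary pairs of points near $p$. Concretely, I expect to prove that for $q_1, q_2$ in a small ball around $p$,
$$
\bigl| d(q_1,q_2) - \widehat{d}_p(q_1,q_2)\bigr| \leq C \tau^{1+1/r},
$$
where $\tau = \max(\|z(q_1)\|_p, \|z(q_2)\|_p)$. The proof mimics that of Theorem~\ref{le:dvshatd}: take a $\widehat{d}$-minimizing curve of velocity one from $z(q_1)$ to $z(q_2)$ with control $u(\cdot)$, and feed the same control into the original system starting from $q_1$; Lemma~\ref{le:x-xhat} bounds the endpoint discrepancy by $C\tau T^{1/r}$ with $T = \widehat{d}_p(q_1,q_2)$, and the iterative/telescoping contraction used in the proof of Theorem~\ref{le:bbox} then provides a trajectory of the original system from $q_1$ to $q_2$ whose extra length over $T$ is $O(\tau T^{1/r})$. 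Reversing the roles of $d$ and $\widehat{d}$ gives the matching lower bound, and using the Ball-Box theorem (together with Lemma~\ref{le:nahomo}(iv)) bounds $T$ by $\mathit{Cst}\, \tau$. This is the main obstacle of the argument.

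Once this uniform comparison is established, the Gromov--Hausdorff convergence on balls follows by a direct calculation. Fix $r>0$ and let $\lambda \to +\infty$. For $q_1,q_2 \in B^{\lambda M}(p,r)$, Ball-Box gives $\|z(q_i)\|_p \leq C_p r/\lambda$, so $\tau \leq C_p r /\lambda$ and hence
$$
\bigl| \lambda d(q_1,q_2) - \widehat{d}(\Phi_\lambda(q_1),\Phi_\lambda(q_2)) \bigr| = \lambda \bigl| d(q_1,q_2) - \widehat{d}_p(q_1,q_2) \bigr| \leq C\, \lambda^{-1/r} \to 0,
$$
using the homogeneity $\widehat{d}(\delta_\lambda x, \delta_\lambda y) = \lambda \widehat{d}(x,y)$ from Lemma~\ref{le:nahomo}. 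For near-surjectivity onto $B^{\widehat{d}}(0,r)$, given $x \in B^{\widehat{d}}(0,r)$ the point $q = z^{-1}(\delta_{1/\lambda} x)$ satisfies $\Phi_\lambda(q) = x$, and Theorem~\ref{le:dvshatd} combined with the homogeneity of $\widehat{d}$ gives $\lambda d(p,q) = \widehat{d}(0,x)(1+o(1)) \leq r(1+o(1))$, so after slightly enlarging $\lambda$-balls the correspondence is $\varepsilon$-dense. These two facts together produce, for every $r>0$ and every $\varepsilon>0$, a correspondence between $B^{\lambda M}(p,r)$ and $B^{\widehat{d}}(0,r)$ of distortion less than $\varepsilon$ for all sufficiently large $\lambda$, which is precisely the definition of $(\lambda M,p) \to (\R^n,\widehat{d},0)$.

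Finally, uniqueness up to isometry follows because for $\lambda$ large the closed balls $\overline{B}^{\lambda M}(p,r)$ are contained in a fixed compact neighbourhood of $p$, hence are compact by Corollary~\ref{le:toposr}; the same applies to $\overline{B}^{\widehat{d}}(0,r)$, which is compact because $\widehat{d}$ is continuous and proper on $\R^n$ (its sub-level sets are contained in $\mathrm{Box}(C r)$ by Lemma~\ref{le:nahomo}(iv)). This justifies calling $(\R^n,\widehat{d})$ \emph{the} metric tangent space $C_pM$ and concludes the proof.
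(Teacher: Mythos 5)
Note first that the paper does not prove this theorem: it states that the result follows from Bella\"{\i}che's estimate~\eqref{eq:complete_bel}, $|d(q,q')-\widehat{d}(q,q')| \leq \mathit{Cst}\,\widehat{d}(p,q)\,d(q,q')^{1/r}$, and refers to~\cite{bel96} for both proofs. Your plan correctly identifies the same reduction (a uniform two-point comparison of $d$ and $\widehat{d}_p$, in a form weaker than~\eqref{eq:complete_bel} but still sufficient since $\lambda\tau^{1+1/r}\leq C\lambda^{-1/r}\to 0$ on $B^{\lambda M}(p,r)$), and the second half of your argument — the distortion computation, the near-surjectivity of $\Phi_\lambda$, and compactness of closed balls for uniqueness — is sound and standard. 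The gap is in the step you yourself flag as the main obstacle, and it is a genuine one.

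The telescoping contraction of Theorem~\ref{le:bbox} works only because the target is $0$, the center of the privileged coordinates: there one has the two-sided equivalence $\widehat{d}(0,x)\asymp\|x\|_p$ (Lemma~\ref{le:nahomo}(iv)), so the endpoint error $\|x^{k+1}\|_p\leq C\tau\,T_k^{1/r}$ converts directly into $\widehat{d}(0,x^{k+1})\leq C''\widehat{d}(0,x^k)^{1+1/r}$, a genuine contraction. With target $z(q_2)\neq 0$, after one step you only control $\|y^1-z(q_2)\|_p$, and the pseudo-norm of a coordinate difference does not control $\widehat{d}(y^1,z(q_2))$ linearly: the sharp conversion (by homogeneity of $\widehat{d}$ plus the H\"{o}lder estimate of Theorem~\ref{th:RvsSR} applied to the nilpotent system) is $\widehat{d}(x,y)\leq C\tau^{1-1/r}\|x-y\|_p^{1/r}$, already seen in the Heisenberg group where two points at height $R$ differing by $\delta$ in the second coordinate are at distance $\sim\sqrt{R\delta}\gg\delta$. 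Feeding this into the iteration gives $T_{k+1}\leq C\tau\,T_k^{1/r^2}$, a map whose fixed point is of order $\tau^{1+1/(r^2-1)}$: the sequence stalls there instead of converging to $0$, so your construction never actually reaches $q_2$. Bounding the residual $d(y^k,q_2)$ would require a distance estimate centered at $q_2$ that is uniform as $q_2$ ranges near $p$ — precisely the uniformity that fails at singular points (Remark~\ref{pa:bbox_nonuniform}), and the reason Bella\"{\i}che's proof of~\eqref{eq:complete_bel} is substantially more delicate than the proof of Theorem~\ref{le:bbox}. (The reverse inequality $\widehat{d}_p(q_1,q_2)\leq d(q_1,q_2)+C\tau^{1+1/r^2}$ does follow in one step by your argument, because it is $\widehat{d}$, not $d$, that enjoys the correctly scaled H\"{o}lder bound.) To repair the plan you would need either to reprove something of the strength of~\eqref{eq:complete_bel}, or to invoke a uniform estimate such as Theorem~\ref{th:unifbbox} at the moving base point $q_2$ and relate its boxes to the coordinate discrepancy — neither of which is a routine extension of the lemmas you cite.
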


This theorem, due to Bella\"{\i}che, is a consequence of a strong version of
Theorem~\ref{le:bbox}: for $q$ and $q'$ in a neighbourhood of $p$,
\begin{eqnarray}
\label{eq:complete_bel}
|d(q,q')- \widehat{d} (q,q')| \leq  \mathit{Cst} \, \widehat{d}(p,q) d(q,q')^{1/r}.
\end{eqnarray}
In these notes, we present neither the proof of this result, nor the one of Theorem~\ref{th:tangentcone}, and we refer the reader to~\cite{bel96}.

\begin{remark}
Recall that $\widehat{d}$ is not intrinsic to the frame
$(X_1,\dots,X_m)$. Thus Theorem~\ref{th:tangentcone} does not provide an intrinsic characterization of the
metric tangent space. Such characterizations exist for sub-Riemannian manifolds $(M,D,g_R)$ in~\cite{mar00} and
\cite{fal03}, and the latter could easily be adapted to the case of a sub-Riemannian geometry associated with a nonholonomic system. However these characterizations are intrinsic to the differentiable manifold $M$ equipped with the sub-Riemannian structure $(D,g_R)$, or to $M$ equipped with the frame $(X_1,\dots,X_m)$,  not to the metric space $(M,d)$. To our knowledge, the problem of finding a definition of the metric tangent space $C_pM$ depending only on the Carnot-Carath\'{e}odory space $(M,d)$ is still open.
\end{remark}

The question we want to address now is:
what is the algebraic structure of $C_pM$? Of course $C_pM$ is not a
linear space in general: for instance, $\widehat{d}$ is homogeneous of
degree 1 but with respect to dilations $\delta_t$ but not with respect to the
usual Euclidean dilations. We will see that $C_pM$ has a natural
structure of group, or at least of quotient of groups.


Denote by $G_p$ the group generated by the diffeomorphisms $\exp(
t \widehat{X}_i)$ acting on the left on $\R^n$. Since
$\mathfrak{g}_p= \mathit{Lie} (\widehat{X}_1, \dots, \widehat{X}_m)$
is a nilpotent Lie algebra, $G_p = \exp (\mathfrak{g}_p)$ is a simply
connected group, having $\mathfrak{g}_p$ as its Lie algebra.

This Lie algebra $\mathfrak{g}_p$ splits into homogeneous components
$$
\mathfrak{g}_p = \mathfrak{g}^{-1} \oplus \cdots \oplus \mathfrak{g}^{-r},
$$
where $\mathfrak{g}^{-s}$ is the set of homogeneous vector fields of degree
$-s$, and so $\mathfrak{g}_p$ is a graded Lie algebra. The first component $\mathfrak{g}^{-1}=
\mathrm{span} \langle \widehat{X}_1, \dots, \widehat{X}_m \rangle$ generates $\mathfrak{g}_p$ as a Lie algebra. All
these properties imply that $G_p$ is what we  call a Carnot group.

\begin{definition}
A {\em Carnot group} is a simply connected Lie group, such that the associated Lie algebra is graded, nilpotent, and generated by its first component.
\end{definition}

Note that the
dilations $\delta_t$ act on $\mathfrak{g}_p$ as a multiplication by $t^{-s}$ on
$\mathfrak{g}^{-s}$. This action extends to $G_p$ by the exponential mapping.

\begin{example}[Heisenberg group]
\label{ex:heisgroup}
The simplest non Abelian Carnot group is the \emph{Heisenberg group} $\mathbb{H}^3$ which is the connected and simply connected Lie group whose Lie algebra satisfies
$$
\mathfrak{g} = \mathfrak{g}^{-1} \oplus  \mathfrak{g}^{-2}, \qquad \hbox{with } \dim \mathfrak{g}^{-1} = 2.
$$
As a consequence, $\dim \mathbb{H}^3 = 3$. Choosing a basis $X$, $Y$, $Z = [X,Y]$ of $\mathfrak{g}$, we define coordinates on $\mathbb{H}^3$ by the exponential mapping
$$
(x,y,z) \mapsto \exp (x X + y Y + z Z).
$$
By the Campbell-Hausdorff formula (see Section~\ref{se:chforflows}),  the law group on $\mathbb{H}^3$ in these coordinates is
$$
(x,y,z) \cdot (x',y',z') = (x+x', y+y', z+z'+\frac{1}{2} (xy'-x'y)),
$$
which is homogeneous with respect to the dilation $\delta_t (x,y,z) = (tx,ty,t^2z)$.

Finally, denote by $X_1, X_2$ the left-invariant vector fields on $\mathbb{H}^3$ whose values at the identity are respectively $X$ and $Y$. In coordinates $(x,y,z)$, these vector fields write as
$$
X_1  = {\partial_x} - \frac{y}{2}  {\partial_z} \quad \mbox{ and } \quad
X_2 = {\partial_y} + \frac{x}{2}  {\partial_z},
$$
which are the vector fields of what we have called the Heisenberg case in examples~\ref{ex:heis} and~\ref{ex:heis2}.
\end{example}

%

Let $\widehat{\xi}_1, \dots, \widehat{\xi}_m$ be the right-invariant vector fields
on $G_p$ such that $\widehat{\xi}_i ( \mathrm{id}) = \widehat{X}_i$, where $\mathrm{id}$ is the identity of $G_p$. Equivalently,
$$
\widehat{\xi}_i (g) = \frac{d}{dt} \big[ \exp (t \widehat{X}_i) g \big] \big|_{t=0}.
$$
With $(\widehat{\xi}_1, \dots, \widehat{\xi}_m)$ is associated a
right-invariant sub-Riemannian metric and a sub-Riemannian distance
$d_{G_p}$ on $G_p$.

The action of $G_p$ on $\R^n$ is smooth and transitive. Indeed, for
every $x \in
\R^n$, the orbit of $x$ under the
    action of $G_p$ is the set
$$
\left\{ \exp(t_{i_1} \widehat{X}_{i_1}) \circ \cdots \circ \exp(t_{i_k}
\widehat{X}_{i_k})(x) \ : \ k\in \N, \ t_{i_j} \in \R, \ i_j \in \{ 1,
\dots, m\} \right\}.
$$
By Chow-Rashevsky's theorem (or more precisely from
    Remark~\ref{re:orbit}), this set is
the whole $\R^n$ since $( \widehat{X}_1, \dots, \widehat{X}_m )$
    satisfies Chow's Condition on $\R^n$ (Lemma~\ref{le:nahomo}).

To understand the algebraic structure of $C_pM$ we will use the following standard result on transitive action
of Lie groups (see for instance~\cite[Th.\ 9.24]{lee03}).

\begin{theorem}
\label{th:lee}
Let $G$ be a Lie group acting on the left smoothly and transitively on
a manifold
$M$. Let $q \in M$ and $H$ be the isotropy
subgroup of $q$ which is defined by $H=\{ g \in G \ : \ g \cdot q = q \}$. Then
$H$ is a closed subgroup of $G$,
the left coset space $G/H$ is a manifold of dimension $\dim G - \dim
H$, and  the map $F : G/H \to M$  defined by $F(gH) = g \cdot q$  is
an equivariant diffeomorphism.
\end{theorem}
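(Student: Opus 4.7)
The plan is to verify the four claims in order: closedness of $H$, manifold structure on $G/H$, the diffeomorphism property of $F$, and equivariance. This is a classical result, so the approach will follow the standard template from Lie group theory, invoking Cartan's closed subgroup theorem as the main black box and the constant rank theorem to handle dimensions.

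First I would observe that the orbit map $\phi_q : G \to M$, $g \mapsto g\cdot q$, is smooth by assumption, so $H = \phi_q^{-1}(q)$ is closed in $G$ as the preimage of a point. Applying \emph{Cartan's closed subgroup theorem}, $H$ is an embedded Lie subgroup of $G$, with Lie algebra $\mathfrak{h} = \ker(d\phi_q)_e \subset \mathfrak{g}$. The quotient $G/H$ then carries a unique smooth manifold structure such that the projection $\pi : G \to G/H$ is a smooth submersion (this is another standard theorem about quotients by closed subgroups; local charts are built from local sections of $\pi$). Its dimension is $\dim G - \dim H$ directly from the submersion property.

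Next I would define $F : G/H \to M$ by $F(gH) = g\cdot q$ and check well-definedness, bijectivity, and smoothness. Well-definedness is immediate: if $gH = g'H$ then $g^{-1}g' \in H$, so $(g'\cdot q) = g \cdot (g^{-1}g' \cdot q) = g\cdot q$. Injectivity is the converse computation, and surjectivity follows from transitivity. Smoothness of $F$ comes from the factorization $\phi_q = F \circ \pi$ together with the fact that $\pi$ is a surjective submersion: any smooth map that descends through a submersion to a well-defined map is itself smooth.

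The main technical point is showing $F$ is a diffeomorphism, which I would handle via equivariance and the constant rank theorem. The orbit map $\phi_q$ is $G$-equivariant (with $G$ acting on itself by left translation), so its rank is constant on $G$; since $\phi_q$ is surjective onto the manifold $M$, the constant rank theorem forces that rank to equal $\dim M$. Hence $d\phi_q$ at $e$ is surjective with kernel exactly $\mathfrak{h}$, giving in particular $\dim M = \dim G - \dim H$ and showing that $dF_{[e]} : \mathfrak{g}/\mathfrak{h} \to T_qM$ is a linear isomorphism. By the same equivariance applied to $F$ (which is $G$-equivariant by construction, since $F(g'\cdot gH) = (g'g)\cdot q = g'\cdot F(gH)$, which also proves the last assertion of the theorem), $dF$ is an isomorphism at every point of $G/H$. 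Thus $F$ is a smooth bijective local diffeomorphism, hence a diffeomorphism, completing the proof. The only subtle step is the constant-rank/surjectivity argument; everything else is bookkeeping built on Cartan's theorem.
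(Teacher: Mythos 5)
Your proof is correct, and it is exactly the standard argument: the paper does not prove this theorem at all but simply cites it as \cite[Th.\ 9.24]{lee03}, and your write-up (Cartan's closed subgroup theorem, the quotient manifold theorem, smoothness of $F$ via descent through the submersion $\pi$, and the equivariance--plus--constant-rank argument to show $F$ is a local diffeomorphism, hence a diffeomorphism) is precisely the proof found in that reference. The only cosmetic remark is that the identification $\mathfrak{h}=\ker(d\phi_q)_e$ is really a consequence of the constant-rank argument you give later, not of Cartan's theorem itself, but since you do supply that argument there is no gap.
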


Let $H_p$ be the isotropy subgroup of $0 \in \R^n$ under the action of $G_p$. According to
Theorem~\ref{th:lee}, the  map $\phi_p : G_p \rightarrow \R^n$,
$\phi_p(g)=g(0)$,  induces
a diffeomorphism
$$
\psi_p : G_p / H_p \rightarrow \R^n, \qquad \psi_p(gH_p) = g(0).
$$

Observe that $H_p$ is invariant under dilations, since $\delta_t g
(\delta_t x) = \delta_t (g(x))$. Hence $H_p$ is connected and
simply connected, and so $H_p = \exp (\mathfrak{h}_p)$, where
$\mathfrak{h}_p$ is
the Lie sub-algebra of $\mathfrak{g}_p$ containing the vector fields
vanishing at $0$,
$$
\mathfrak{h}_p = \{ Z \in \mathfrak{g}_p \ : \ Z(0)=0 \}.
$$
As $\mathfrak{g}_p$,  $\mathfrak{h}_p$ is  invariant under dilations and splits into
homogeneous components.


Now, the elements $\widehat{X}_1, \dots, \widehat{X}_m$ of $\mathfrak{g}_p$ act on the left on
$G_p / H_p$ with the notation $\overline{\xi}_1,
\dots, \overline{\xi}_m$,
$$
\overline{\xi}_i (gH_p) = \frac{d}{dt} \big[ \exp (t \widehat{X}_i) g H_p\big] \big|_{t=0}.
$$
These vector fields define a sub-Riemannian metric and a
sub-Riemannian distance $\overline{d}$ on $G_p / H_p$. We clearly
have ${\psi_p}_*\overline{\xi}_i = \widehat{X}_i$, so $\psi_p$ maps
the sub-Riemannian
metric on $G_p / H_p$ associated with $(\overline{\xi}_1,
\dots, \overline{\xi}_m)$ to the one on $\R^n$ associated with $( \widehat{X}_1,
\dots, \widehat{X}_m )$. We summarize this construction by the following result.

\begin{theorem}
\label{th:algstruc}
The metric tangent space $C_pM$ and
$(\R^n,\widehat{d})$ are isometric to the coset
space $G_p / H_p$ endowed with the sub-Riemannian distance $\overline{d}$.
\end{theorem}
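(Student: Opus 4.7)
The statement is a combination of two assertions: first, $C_pM$ is isometric to $(\R^n,\widehat{d})$; second, $(\R^n,\widehat{d})$ is isometric to $(G_p/H_p,\overline{d})$. The first assertion is precisely the content of Theorem~\ref{th:tangentcone} and may be invoked directly. So the plan is to concentrate on producing an isometry between $(\R^n,\widehat{d})$ and $(G_p/H_p,\overline{d})$, and the natural candidate is the diffeomorphism $\psi_p : G_p/H_p \to \R^n$, $\psi_p(gH_p)=g(0)$, already furnished by Theorem~\ref{th:lee}.

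The key observation, recorded just before the statement, is that ${\psi_p}_*\overline{\xi}_i = \widehat{X}_i$ for $i=1,\dots,m$. From this I would deduce that $\psi_p$ establishes a bijection between horizontal curves: a path $\gamma:[0,T]\to G_p/H_p$ satisfies $\dot\gamma(t)=\sum_i u_i(t)\overline{\xi}_i(\gamma(t))$ if and only if $\psi_p\circ\gamma$ satisfies $\dot{(\psi_p\circ\gamma)}(t)=\sum_i u_i(t)\widehat{X}_i(\psi_p\circ\gamma(t))$, \emph{with the same control} $u(\cdot)$. Because the sub-Riemannian metric at a point is defined intrinsically by the infimum formula \eqref{eq:srmetric} applied to the generating family, and because this family is mapped by $\psi_p$ to the generating family on $\R^n$, the two sub-Riemannian metrics are $\psi_p$-related. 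Consequently $\mathrm{length}(\gamma)=\mathrm{length}(\psi_p\circ\gamma)$ for every absolutely continuous horizontal path.

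Taking infima over all horizontal curves joining two given points on each side then yields
$$
\overline{d}(gH_p,g'H_p)=\widehat{d}(\psi_p(gH_p),\psi_p(g'H_p)),
$$
so $\psi_p$ is a global isometry. Combined with Theorem~\ref{th:tangentcone}, this gives the chain of isometries $C_pM\cong(\R^n,\widehat{d})\cong(G_p/H_p,\overline{d})$.

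The only step that requires any care is the correspondence of horizontal paths and controls; everything else reduces to unwinding definitions. One must check that an absolutely continuous curve in $G_p/H_p$ whose derivative lies almost everywhere in $\mathrm{span}\{\overline{\xi}_i\}$ pushes forward to such a curve in $\R^n$ with the same $L^1$ control, which is straightforward from ${\psi_p}_*\overline{\xi}_i=\widehat{X}_i$ and the fact that $\psi_p$ is a diffeomorphism (so pushforward and pull-back of horizontal paths preserve absolute continuity and the control). I do not foresee any substantial obstacle; the bulk of the theorem's content has been absorbed into the transitivity of the $G_p$-action, the identification $G_p/H_p\cong\R^n$ via Theorem~\ref{th:lee}, and the metric-tangent-space result Theorem~\ref{th:tangentcone}.
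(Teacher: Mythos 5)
Your proposal is correct and follows essentially the same route as the paper: the paper presents Theorem~\ref{th:algstruc} as a summary of the immediately preceding construction, whose key point is precisely that ${\psi_p}_*\overline{\xi}_i = \widehat{X}_i$ forces $\psi_p$ to carry the sub-Riemannian metric of $(\overline{\xi}_1,\dots,\overline{\xi}_m)$ to that of $(\widehat{X}_1,\dots,\widehat{X}_m)$, hence to be an isometry, with the identification $C_pM\cong(\R^n,\widehat{d})$ supplied by Theorem~\ref{th:tangentcone}. Your explicit unwinding of the correspondence of horizontal curves, controls, and lengths just fills in the routine details the paper leaves implicit.
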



\begin{example}[Gru\v{s}in plane]
\label{ex:grusin}
Consider the vector fields $X_{1}=\partial_x$ and $X_2 = x \partial_y$
on $\R^2$. The Carnot-Carath\'{e}odory space defined by these vector
fields is called the \emph{Gru\v{s}in plane}.

The only nonzero bracket is $X_{12} = \partial_y$. Thus, at $p=0$, the
weights are $(1,2)$, and $(x,y)$ are privileged coordinates. Since
$X_1$ and $X_2$ are homogeneous with respect to this system of
coordinates, we have $\widehat{X}_1 = X_1$ and $\widehat{X}_2=
X_2$. The Lie algebra they generate is
$$
\mathfrak{g}_0 = \mathrm{span} (X_1, X_2, X_{12}) 
$$
which is of dimension 3, and the group $\exp (\mathfrak{g}_0)$ is
actually the \emph{Heisenberg group} $\mathbb{H}^3$ (see
example~\ref{ex:heisgroup}).
The Lie sub-algebra $\mathfrak{h}_0$ of $\mathfrak{g}_0$ containing
the vector fields
vanishing at $0$ is
$$
\mathfrak{h}_0 = \mathrm{span} (X_2), 
$$
which is one-dimensional. Thus the Gru\v{s}in plane is isometric to $\mathbb{H}^3 / \exp(\mathfrak{h}_0)$ endowed with the distance $\overline{d}$.
\end{example}

\begin{example}[Martinet case]
\label{ex:martinet2}
Consider the Martinet case, defined on $\R^{3}$ by
$$
X_1  = {\partial_x}  \quad \mbox{ and } \quad
X_2 = {\partial_y} + \frac{x^2}{2}  {\partial_z}.
$$
As noticed in example~\ref{ex:martinet}, at $p=0$, the coordinates $(x,y,z)$ are privileged and, by homogeneity, $\widehat{X}_1 = X_1$ and $\widehat{X}_2= X_2$. Moreover the only nonzero bracket are $X_{12} = x\partial_z$ and $X_{112} = \partial_z$. Thus,
$$
\mathfrak{g}_0 = \mathrm{span} (X_1, X_2, X_{12},X_{112} ) 
$$
which is of dimension 4. The group $\exp (\mathfrak{g}_0)$ is called the \emph{Engel group}, and is denoted by $\mathbb{E}^4$.
The Lie sub-algebra $\mathfrak{h}_0$ of $\mathfrak{g}_0$ containing the vector fields
vanishing at $0$ is
$$
\mathfrak{h}_0 = \mathrm{span} (X_{12}). 
$$
\end{example}
\medskip

When the point $p$ is regular, Theorem~\ref{th:algstruc} can be refined thanks to the following result.

\begin{lemma}
If $p$ is a regular point, then $\dim G_p=n$.
\end{lemma}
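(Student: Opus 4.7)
The plan is to translate the claim into a computation of $\dim_\R \mathfrak{g}_p$, where $\mathfrak{g}_p = \mathit{Lie}(\widehat{X}_1,\dots,\widehat{X}_m)$. Since $G_p = \exp(\mathfrak{g}_p)$ is simply connected with Lie algebra $\mathfrak{g}_p$, one has $\dim G_p = \dim_\R \mathfrak{g}_p$, so it suffices to show $\dim_\R \mathfrak{g}_p = n$ when $p$ is regular. I would then split the task into a lower and an upper bound.

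The lower bound $\dim_\R \mathfrak{g}_p \geq n$ holds without any regularity hypothesis. By Proposition~\ref{le:hmnilp} every iterated bracket of the $\widehat{X}_i$ of length strictly greater than $r = w_n$ vanishes, so $\mathfrak{g}_p = \widehat{\Delta}^r$ as subspaces of $VF(\R^n)$. The evaluation map $Y \mapsto Y(0)$ sends $\mathfrak{g}_p$ onto $\widehat{\Delta}^r(0)$, which by Lemma~\ref{le:nahomo}(ii) has dimension $n_r(p) = n$, yielding $\dim_\R \mathfrak{g}_p \geq n$.

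For the matching upper bound, which is where regularity enters, I would invoke the structural fact already recorded just after the definition of the nilpotent approximation: at a regular point $p$, the Lie algebra $\mathfrak{g}_p$ is isomorphic, as a graded nilpotent Lie algebra, to
\[
\mathrm{Gr}(\Delta)_p \;=\; \bigoplus_{s=1}^{r} \Delta^s(p)/\Delta^{s-1}(p).
\]
The dimension of the right-hand side is the telescoping sum $\sum_{s=1}^{r}\bigl(n_s(p)-n_{s-1}(p)\bigr) = n_r(p) = n$. Combined with the lower bound, this forces $\dim_\R \mathfrak{g}_p = n$, and therefore $\dim G_p = n$.

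The substantive obstacle, if one wished to unpack the isomorphism with $\mathrm{Gr}(\Delta)_p$ rather than cite it, would be exactly the upper bound. The natural approach is to decompose $\mathfrak{g}_p = \bigoplus_s \mathfrak{g}^{-s}$ by homogeneous degree and to study the linear maps $\pi_s : \mathfrak{g}^{-s} \to \widehat{\Delta}^s(0)/\widehat{\Delta}^{s-1}(0)$ given by $Y \mapsto [Y(0)]$. Surjectivity follows from Lemma~\ref{le:nahomo}(ii) together with the congruence $\widehat{X}_I(0) = X_I(p) \bmod \Delta^{|I|-1}(p)$ used there, but injectivity requires that no nonzero homogeneous element of $\mathfrak{g}_p$ vanish at $0$. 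This genuinely fails at singular points, as in the Martinet example at the origin where $[\widehat{X}_1,\widehat{X}_2] = x\,\partial_z \in \mathfrak{g}^{-2}$ is nonzero yet vanishes at $0$, producing the extra dimension that makes $\mathfrak{g}_0$ the $4$-dimensional Engel algebra while $n=3$. At a regular point, establishing injectivity of each $\pi_s$ is precisely what the isomorphism with $\mathrm{Gr}(\Delta)_p$ encodes, and it is the hard part of a self-contained proof.
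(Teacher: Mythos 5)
Your framing is sound and the lower bound is correct, but the proof has a genuine gap: the upper bound $\dim\mathfrak{g}_p\leq n$, which is the entire content of the lemma, is obtained by citing the remark that $\mathfrak{g}_p\cong\mathrm{Gr}(\Delta)_p$ at a regular point. That remark is stated in the text without proof, and the lemma you are asked to prove is in effect the key ingredient of it, so the appeal is circular within the paper's logic. You recognize this yourself when you write that injectivity of the maps $\pi_s$ is ``the hard part of a self-contained proof'' — but then you do not carry it out, so the argument as written does not establish the claim.

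The paper closes exactly this gap with a short direct argument that you should supply. Fix an adapted frame $X_{I_1},\dots,X_{I_n}$ at $p$. Regularity is used once, and decisively: since the growth vector is constant near $p$, this family remains an adapted frame on a whole neighbourhood, so every bracket $X_J$ decomposes as
$$
X_J(z)=\sum_{\{i\,:\,|I_i|\leq|J|\}}a_i(z)\,X_{I_i}(z),
$$
where each coefficient $a_i$ has nonholonomic order $\geq|I_i|-|J|$ at $p$ (this order bound is where the pointwise adaptedness on a neighbourhood, not just at $p$, is needed). Extracting the homogeneous component of weighted degree $-|J|$ from both sides yields
$$
\widehat{X}_J(z)=\sum_{\{i\,:\,|I_i|=|J|\}}a_i(0)\,\widehat{X}_{I_i}(z),
$$
so every $\widehat{X}_J$ lies in $\mathrm{span}\langle\widehat{X}_{I_1},\dots,\widehat{X}_{I_n}\rangle$; hence these $n$ fields span $\mathfrak{g}_p$, and with your lower bound they form a basis. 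Note that this spanning argument simultaneously delivers the injectivity you were missing: a homogeneous element of degree $-s$ in $\mathfrak{g}_p$ is a combination of the $\widehat{X}_{I_i}$ with $|I_i|=s$, whose values at $0$ are independent modulo $\Delta^{s-1}(p)$, so it cannot vanish at $0$ unless it is zero. Your Martinet discussion correctly locates where this fails at singular points, but identifying the obstacle is not the same as overcoming it.
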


\begin{proof}
Let $X_{I_1}, \dots, X_{I_n}$ be an adapted frame at $p$.
Due to the regularity of $p$, $X_{I_1}, \dots, X_{I_n}$ is also an adapted frame
near $p$, so any bracket $X_J$ can be written as
$$
X_J (z) = \sum_{\{i \, : \, |I_i| \leq |J| \}} a_i(z) X_{I_i} (z),
$$
where  each $a_i$ is a function of order $\geq |I_i| - |J|$. Taking the homogeneous terms of degree $- |J|$ in this expression,
we obtain
$$
\widehat{X}_J (z) = \sum_{ \{i \, : \, |I_i| = |J|\}} a_i(0) \widehat{X}_{I_i}
(z),
$$
and so $\widehat{X}_J \in \mathrm{span} \langle \widehat{X}_{I_1},
\dots, \widehat{X}_{I_n}
\rangle$. Thus $\widehat{X}_{I_1}, \dots, \widehat{X}_{I_n}$ is a basis of
$\mathfrak{g}_p$, and so $\dim G_p=n$. 
\end{proof}

As a consequence $H_p$ is of dimension zero. Since $H_p$ is invariant under dilations, $H_p=\{ \mathrm{id} \}$, and hence the mapping $\phi_p : G_p \rightarrow \R^n$,
$\phi_p(g)=g(0)$, is a diffeomorphism. Moreover ${\phi_p}_*\widehat{\xi}_i=\widehat{X}_i$, which
implies that $\phi_p$
maps the sub-Riemannian
metric on $G_p$  associated with $(\widehat{\xi}_1, \dots, \widehat{\xi}_m)$ to
the one on $\R^n$ associated with $( \widehat{X}_1, \dots, \widehat{X}_m )$. This
gives the following result\footnote{This result appeared first in~\cite{mit85}, but with an erroneous proof. The presentation given here is inspired from the one of~\cite{bel96}.}.

\begin{proposition}
When $p$ is a regular point, the metric tangent space $C_pM$ and
the Carnot-Carath\'{e}odory space $(\R^n,\widehat{d})$ are isometric to
the Carnot group $G_p$ endowed with the right-invariant sub-Riemannian
distance $d_{G_p}$.
\end{proposition}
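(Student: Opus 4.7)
The plan is to assemble the proposition from three ingredients already in place: the preceding lemma $\dim G_p = n$, the dilation-invariance of $H_p$ noted just before the statement, and Theorem~\ref{th:algstruc} identifying $C_pM$ with $(\R^n,\widehat d)$. The essential new content is that under regularity the isotropy $H_p$ collapses to a point, so $\phi_p\colon G_p\to\R^n$ is not merely a submersion but a diffeomorphism that also matches the right-invariant metric on $G_p$ with the homogeneous nilpotent metric on $\R^n$.

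First, by the lemma $\dim G_p=n$, so $\dim H_p = \dim G_p - \dim (G_p/H_p) = 0$, i.e.\ $H_p$ is a discrete closed subgroup. Since $H_p=\exp(\mathfrak h_p)$ is connected, it is $\{\mathrm{id}\}$; alternatively, as observed in the paragraph preceding the proposition, $H_p$ is invariant under the one-parameter family of dilations $\delta_t$, which act trivially on $\mathrm{id}$ and continuously contract any $g\in G_p$ to $\mathrm{id}$ as $t\to 0$, so a discrete dilation-invariant subgroup must reduce to $\{\mathrm{id}\}$. Applying Theorem~\ref{th:lee} with $H=H_p=\{\mathrm{id}\}$, the map $\phi_p\colon G_p\to\R^n$, $\phi_p(g)=g(0)$, is a diffeomorphism.

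Next I check that $\phi_p$ transports the right-invariant sub-Riemannian structure on $G_p$ to the homogeneous one on $\R^n$. By definition
\[
\widehat\xi_i(g)=\frac{d}{dt}\bigl[\exp(t\widehat X_i)\,g\bigr]\Big|_{t=0},
\]
hence
\[
(\phi_p)_*\widehat\xi_i(\phi_p(g))=\frac{d}{dt}\phi_p\bigl(\exp(t\widehat X_i)g\bigr)\Big|_{t=0}=\frac{d}{dt}\exp(t\widehat X_i)\bigl(g(0)\bigr)\Big|_{t=0}=\widehat X_i(\phi_p(g)),
\]
so $(\phi_p)_*\widehat\xi_i=\widehat X_i$ for $i=1,\dots,m$. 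Consequently, horizontal curves on $G_p$ correspond under $\phi_p$ to horizontal curves on $\R^n$ driven by the same control, with the same sub-Riemannian length; taking infima over admissible curves joining two points yields $\widehat d(\phi_p(g),\phi_p(g'))=d_{G_p}(g,g')$, i.e.\ $\phi_p$ is a global isometry between $(G_p,d_{G_p})$ and $(\R^n,\widehat d)$.

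Combining this isometry with Theorem~\ref{th:algstruc}, which identifies $C_pM$ with $(\R^n,\widehat d)$, yields the three-way isometry asserted in the proposition. I expect no genuine obstacle here: the only subtle point is the triviality of $H_p$, and the rest is the standard fact that a diffeomorphism intertwining the generating horizontal frames automatically intertwines the induced Carnot--Carath\'eodory distances.
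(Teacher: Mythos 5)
Your argument coincides with the paper's: the lemma gives $\dim G_p=n$, hence $H_p$ is zero-dimensional and, being dilation-invariant (equivalently connected, since $H_p=\exp(\mathfrak h_p)$), trivial, so $\phi_p$ is a diffeomorphism satisfying $(\phi_p)_*\widehat\xi_i=\widehat X_i$ and therefore an isometry. You merely spell out the details the paper leaves implicit; the approach and the key steps are the same.
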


Thus  Carnot groups have the same role in sub-Riemannian geometry as
Euclidean spaces have in
Riemannian geometry. For this reason they are sometimes referred to as
``non Abelian linear spaces'': the internal operation -- addition --
is
replaced by the law group and the external operation -- multiplication by
a real number --  by the dilations.
Note that, when $G_p$ is
Abelian (i.e.\  commutative) then $G_p$ has a linear structure and the
sub-Riemannian metric on $G_p$ is a Euclidean metric.


\begin{example}[unicycle]
\label{ex:unicycle3}
In the case of the distance $d$ associated with the unicycle (examples~\ref{ex:unicycle} and~\ref{ex:unicycle2}), the growth vector is $(2,3)$ at every point. Hence every point $p \in \R^2 \times S^1$ is regular, and the Lie algebra generated by the nilpotent approximation satisfies
$$
\mathfrak{g}_p = \mathfrak{g}^{-1} \oplus  \mathfrak{g}^{-2}, \qquad
\hbox{with } \dim \mathfrak{g}^{-1} = 2.
$$
As a consequence, $G_p = \mathbb{H}^3$ (see
example~\ref{ex:heisgroup}), and so  the metric tangent space to
$(\R^2 \times S^1,d)$ at every point $p$ has the structure of the
Heisenberg group.
\end{example}

\subsection{Desingularization and uniform distance estimate}
\label{se:desing}
We have already highlighted in Remark~\ref{pa:bbox_nonuniform} that singular points may cause difficulties, in particular because of the loss of uniformity of distance estimates. Therefore it is necessary to  study carefully the behaviour of the distance at such points. We proceed as it is usual for singularities, that is,  we consider a
singularity as the projection of a regular object.
To this aim we exploit the algebraic structure of the metric tangent space, which provides a good
way of lifting and projecting Carnot-Carath\'{e}odory spaces.

Let us begin with
nilpotent approximations.
We keep the notations and definitions of the preceding subsection. At
a singular point $p$, we have the following
diagram,
$$
\begin{array}{ccc}
  (G_p, d_{G_p}) &  &  \\[2mm]
  \hbox{\scriptsize $\pi$ } \hbox{\Large $\downarrow$} &
  \hbox{\scriptsize $\phi_p$} \hbox{\Large  $\searrow$}  & \\[1mm]
  (G_p/H_p, \overline{d}) & \stackrel{\psi_p}{\widetilde{\longrightarrow}} &
  (\R^n,\widehat{d})
\end{array}
$$
Since the sub-Riemannian metric on $G_p$ is a right-invariant,
every point in the space $(G_p, d_{G_p})$
is regular.

\begin{definition}
A Carnot-Carath\'{e}odory space $(M,d)$ is said to be \emph{equiregular} if every point in $M$ is regular.
\end{definition}
Thus $(\R^n,\widehat{d})$ is the
projection of the  equiregular space $(G_p, d_{G_p})$.
Recall now that $\widehat{\xi}_1, \dots, \widehat{\xi}_m$ (resp. $\overline{\xi}_1, \dots,
\overline{\xi}_m$) are  mapped to $\widehat{X}_1, \dots, \widehat{X}_m$ by
$\phi_p$ (resp. $\psi_p$). Working in a system of coordinates, we
identify $G_p/H_p$ with $\R^n$ and $\overline{\xi}_i$ with $\widehat{X}_i$. These
coordinates on $\R^n \simeq G_p/H_p$, denoted by $x$, induce coordinates $(x,z)
\in \R^N$ on $G_p$ for which  we have
\begin{eqnarray}
\label{eq:bij}
\widehat{\xi}_i(x,z) = \widehat{X}_i (x) + \sum_{j=n+1}^N b_{ij} (x,z)
\partial_{z_j}.
\end{eqnarray}

Let $(x(\cdot),z(\cdot))$ be a trajectory of the nonholonomic system in $G_p$ defined by $\widehat{\xi}_1, \dots, \widehat{\xi}_m$. Then, for every control $u(\cdot)$ associated with the trajectory,
$$
(\dot x (t), \dot z (t)) = \sum_{i=1}^m
 u_i(t) \widehat{\xi}_i (x,z),
$$
It follows from~\eqref{eq:bij} that $x(\cdot)$ is a trajectory in $\R^n$ of the system defined by $\widehat{\xi}_1, \dots, \widehat{\xi}_m$, which is associated with the same controls $u(\cdot)$, so that
$$
{\mathrm{length}} \big(x(\cdot)\big) = {\mathrm{length}} \big((x,z)(\cdot) \big).
$$
Thus $\widehat{d}$ can be obtained
from the sub-Riemannian distance $d_{G_p}$ in $G_p$ by
$$
\widehat{d} (q_1, q_2) = \inf_{\widetilde{q}_2 \, \in \, q_2H_p} d_{G_p}(\widetilde{q}_1,
\widetilde{q}_2), \qquad \hbox{for any } \widetilde{q}_1 \in q_1H_p,
$$
or, equivalently, $B^{\widehat{d}} (q_1,\eps) = \phi_p \big(
B^{d_{G_p}}(\widetilde{q}_1, \eps) \big)$.\medskip

We will use this idea to desingularize the original space $(M,d)$.
Choose for $x$ privileged coordinates at $p$, so that
$$
X_i(x) = \widehat{X}_i (x) + R_i(x) \qquad \hbox{with } \mathrm{ord}_pR_i \geq 0.
$$
Set $\widetilde{M}= M \times \R^{N-n}$, and in local coordinates
$(x,z)$ on $\widetilde{M}$, define vector fields on a neighbourhood
of $(p,0)$ by
$$
\xi_i (x,z) = X_i (x) + \sum_{j=n+1}^N b_{ij} (x,z)
\partial_{z_j},
$$
with the same functions $b_{ij}$ as in~(\ref{eq:bij}). Such vector
fields are called a \emph{lifting} of the vector fields
$X_1,\dots,X_m$: denoting by $\pi: \widetilde{M} \to M$ the canonical
projection, we have $X_i = \pi_*\xi_i$ for $i=1, \dots,m$, that is,
$X_i$ is the projection of $\xi_i$.

We define in this way a nonholonomic system on an open set
$\widetilde{U} \subset \widetilde{M}$ whose nilpotent
approximation at $(p,0)$ is $(\widehat{\xi}_1,
\dots, \widehat{\xi}_m)$, by construction. Unfortunately, $(p,0)$ can be itself a singular
point. Indeed, a point can be singular for a system and
regular for the nilpotent approximation taken at this point.

\begin{example}
Take the vector fields $X_1=\partial_{x_1}$,
$X_2 =
\partial_{x_2} + x_1 \partial_{x_3} + x_1^2 \partial_{x_4}$ and
$X_3 = \partial_{x_5} + x_1^{100} \partial_{x_4}$ on $\R^5$. The origin 0 is
a singular point. However the nilpotent approximation at 0 is
$\widehat{X}_1=X_1$, $\widehat{X}_2=X_2$, $\widehat{X}_3=\partial_{x_5}$, for which $0$ is
not singular.
\end{example}

To avoid this difficulty, we take a group bigger than $G_p$,
namely
 the free nilpotent group $N_r$ of step $r$ with $m$ generators. $N_r$ is a
 Carnot group and its
 Lie algebra $\mathfrak{n}_r$ is the free nilpotent Lie algebra of step $r$
 with $m$ generators. The given of $m$ generators $\alpha_1, \dots,
 \alpha_m$ of
 $\mathfrak{n}_r$ define on $N_r$ a right-invariant sub-Riemannian distance
  $d_N$.

 The group $N_r$ can be thought as a group of diffeomorphisms, and
 so it defines a left action on $\R^n$. Denoting by $J$ the isotropy
 subgroup of 0 for this action, we obtain that $(\R^n, \widehat{d})$ is
 isometric to $N_r/J$ endowed with the restriction of the distance $d_N$.

 Reasoning as above, we are able to lift locally the
 vector fields $X_1, \dots, X_m$ on $M$  to vector fields on $M
 \times \R^{\widetilde{n}-n}$,
 $\widetilde{n}=\dim N_r$, having $\alpha_1, \dots, \alpha_m$ for
 nilpotent  approximation at $(p,0)$.
 Moreover $(p,0)$ is a regular point for the associated
 nonholonomic system in $M \times \R^{\widetilde{n}-n}$ since $N_r$ is
 free up to step  $r$.
We obtain in this way a result of desingularization.

\begin{lemma}
Let $p$ be a point in $M$, $r$ the degree of nonholonomy at $p$,
$\widetilde{n}=\widetilde{n} (m,r)$ the dimension of the free Lie
algebra of step $r$
with $m$ generators, and
$\widetilde{M}= M \times \R^{\widetilde{n}-n}$.
Then there exist a neighbourhood $\widetilde{U} \subset
\widetilde{M}$ of $(p,0)$, a neighbourhood $U \subset M$ of $p$
with $U \times \{0 \} \subset \widetilde{U}$, local coordinates
$(x,z)$ on $\widetilde{U}$, and smooth vector fields on
$\widetilde{U}$,
\begin{equation}
\label{eq:projxi}
\xi (x,z) = X_i (x) + \sum_{j=n+1}^N b_{ij} (x,z)
\partial_{z_j} ,
\end{equation}
such that:
\begin{itemize}
  \item $\xi_1, \dots, \xi_m$  satisfy Chow's Condition and have  $r$
    for degree of nonholonomy
  everywhere (so the Lie algebra they generate is free up to step $r$);
  \item every $\widetilde{q}$ in $\widetilde{U}$ is regular;
  \item denoting by $\pi : \widetilde{M} \rightarrow M$ the canonical
  projection, and by $\widetilde{d}$ the sub-Riemannian distance defined by $\xi_1, \dots, \xi_m$ on
  $\widetilde{U}$, we have $\pi_*\xi_i = X_i$, and for $q \in U$ and
  $\eps>0$ small
  enough,
  $$
B(q,\eps) = \pi \Big( B^{\widetilde{d}}   \big( (q,0), \eps \big)
\Big),
  $$
  or, equivalently,
  $$
d(q_1,q_2) = \inf_{\widetilde{q}_2 \, \in \, \pi^{-1} (q_2)} \widetilde{d}
\big( (q_1,0), \widetilde{q}_2
\big).
  $$
\end{itemize}
 \end{lemma}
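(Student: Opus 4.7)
My plan is to use the free nilpotent Lie group of step $r$ on $m$ generators as a universal model into which the original system lifts without singularities. Let $\mathfrak{n}_r$ be the free nilpotent Lie algebra of step $r$ on $m$ generators $\alpha_1,\dots,\alpha_m$ and $N_r$ the associated simply connected Carnot group, of dimension $\widetilde{n}$. By freeness, the assignment $\alpha_i\mapsto\widehat{X}_i$ extends to a surjective Lie algebra morphism onto $\mathfrak{g}_p$; it integrates to a transitive smooth action of $N_r$ on $\R^n$ realising $(\R^n,\widehat{d})$ as $N_r/J$, where $J$ is the isotropy of $0$, exactly as in Theorem~\ref{th:algstruc}. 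I fix privileged coordinates $x$ at $p$ for the original system, identifying a neighbourhood of $p$ with a neighbourhood of $0$ in $\R^n$, and complete them by coordinates $z_{n+1},\dots,z_{\widetilde{n}}$ along the fibres of $N_r\to N_r/J$ near the identity, chosen so that $(x,z)$ is privileged at $\mathrm{id}$ for the right-invariant frame on $N_r$ associated with the $\alpha_i$. In these coordinates the right-invariant vector fields take the form $\alpha_i(x,z)=\widehat{X}_i(x)+\sum_{j>n}b_{ij}(x,z)\partial_{z_j}$, each being homogeneous of weighted degree $-1$ for the associated dilation.

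On $\widetilde{M}=M\times\R^{\widetilde{n}-n}$, I would then define the lifted vector fields $\xi_i(x,z)=X_i(x)+\sum_{j>n}b_{ij}(x,z)\partial_{z_j}$ using the same coefficients $b_{ij}$; by construction $\pi_*\xi_i=X_i$. Writing $\xi_i=\alpha_i+R_i^{h}$ with $R_i^{h}(x,z):=(X_i-\widehat{X}_i)(x)$ purely horizontal, the relation $\mathrm{ord}_p(X_i-\widehat{X}_i)\geq 0$ shows that $R_i^{h}$ is a sum of monomial vector fields of weighted degree $\geq 0$ in $(x,z)$. The technical heart of the argument, which I expect to be the main obstacle, is to verify that $(x,z)$ remain privileged at $(p,0)$ for the lifted system, so that by Proposition~\ref{le:algorder} the nilpotent approximation of $(\xi_1,\dots,\xi_m)$ at $(p,0)$ is exactly $(\alpha_1,\dots,\alpha_m)$. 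For this I would expand any iterated bracket $\xi_I$ of length $k$ and observe, by bookkeeping of weighted degrees, that $\xi_I=\alpha_I+(\text{terms of weighted degree}>-k)$; evaluating at $(p,0)$, the components along coordinates of weight $k$ coincide with those of $\alpha_I(\mathrm{id})$. Combined with the universal upper bound $\dim\Delta_\xi^k(p,0)\leq\dim\mathfrak{n}_r^{k}$ coming from freeness of $\mathfrak{n}_r$, this forces equality throughout and identifies the $\xi$-flag at $(p,0)$ with the $\alpha$-flag at $\mathrm{id}$.

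Once the growth vector at $(p,0)$ is identified with the free one, Chow's Condition holds at $(p,0)$ with degree of nonholonomy exactly $r$. The lower semi-continuity of $\widetilde{q}\mapsto n_k^{\xi}(\widetilde{q})$ recorded in Section~\ref{se:privcoor} together with the same universal upper bound forces the growth vector to remain constant on a neighbourhood $\widetilde{U}$ of $(p,0)$, so every point of $\widetilde{U}$ is regular with the same degree of nonholonomy $r$. Finally, the distance projection property follows from the triangular form of the $\xi_i$: any trajectory of the lifted system with control $u(\cdot)$ projects under $\pi$ to a trajectory of the original system with the same control, hence of the same length, while conversely a trajectory $q(\cdot)$ of the $X_i$ starting at $q_1$ with control $u(\cdot)$ lifts uniquely to a trajectory of the $\xi_i$ starting at $(q_1,0)$ by integrating the triangular ODE $\dot z_j=\sum_i u_i(t)b_{ij}(x(t),z(t))$, again with the same length. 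Taking infima over controls on both sides yields $d(q_1,q_2)=\inf_{\widetilde{q}_2\in\pi^{-1}(q_2)}\widetilde{d}\bigl((q_1,0),\widetilde{q}_2\bigr)$, equivalently $B(q_1,\eps)=\pi\bigl(B^{\widetilde{d}}((q_1,0),\eps)\bigr)$, for $\eps$ small enough.
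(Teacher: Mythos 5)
Your proposal is correct and follows essentially the same route as the paper: lift to the free nilpotent group $N_r$ of step $r$, realise $(\R^n,\widehat{d})$ as $N_r/J$, transplant the fibre coefficients $b_{ij}$ onto $M\times\R^{\widetilde{n}-n}$, use freeness plus lower semicontinuity of the growth vector to get regularity on a neighbourhood, and project trajectories (with the same controls, hence the same length) to obtain the distance formula. The only difference is one of detail: you spell out the verification that $(x,z)$ remain privileged at $(p,0)$ and that the nilpotent approximation of the $\xi_i$ is $(\alpha_1,\dots,\alpha_m)$, steps the paper leaves implicit in its ``reasoning as above''.
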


\begin{remark}
The lemma still holds if we replace $r$ by any integer greater than
the degree of nonholonomy at $p$.
\end{remark}

Thus any Carnot-Carath\'{e}odory space $(M,d)$ is locally the projection of an
equiregular
Carnot-Carath\'{e}odory space $(\widetilde{M},\widetilde{d})$. This
projection preserves the trajectories, the
minimizers, and the distance.


\begin{example}[Martinet case]
\label{ex:martinet3}
Consider the vector fields of the Martinet case (see
example~\ref{ex:martinet}), defined on $\R^{3}$ by:
$$
X_1  = {\partial_x}  \quad \mbox{ and } \quad
X_2 = {\partial_y} + \frac{x^2}{2}  {\partial_z}.
$$
Let $\pi: \R^4 \to \R^3$ be the projection
with respect to the last coordinates, $\pi(x,y,z,w) = (x,y,z)$. Then $X_1$
and $X_2$ are the projections of the vector fields defining the
Engel group $\mathbb{E}^4$ (see example~\ref{ex:martinet2}),
$$
\xi_1  = {\partial_x}  \quad \mbox{ and } \quad
\xi_2 = {\partial_y} + \frac{x^2}{2}  {\partial_z} + x {\partial_w},
$$
that is $\pi_*\xi_i = X_i$. Thus, for every pair of points $q_1,q_2 \in
\R^3$,
$$
d_{\mathrm{Mart}}(q_1,q_2) = \inf_{w \in \R} d_{\mathbb{E}^4}
\big( (q_1,0), (q_2,w)
\big),
$$
where $d_{\mathrm{Mart}}$ and $d_{\mathbb{E}^4}$ are the
sub-Riemannian distance in respectively the Martinet space and the
Engel group.
\end{example}

\begin{example}[Gru\v{s}in plane]
\label{ex:grusin2}
Consider the vector fields
$$
X_{1}=\partial_x, \qquad X_2 = x \partial_y,
$$
on $\R^2$, which define the Gru\v{s}in plane (see
example~\ref{ex:grusin}). Let $\pi: \R^3 \to \R^2$ be the projection
with respect to the last coordinates, $\pi(x,y,z) = (x,y)$. Then
$X_1=\pi_*\xi_1$
and $X_2=\pi_*\xi_2$, where
$$
\xi_1  = {\partial_x}  \quad \mbox{ and } \quad
\xi_2 = {\partial_z} + x  {\partial_z},
$$
are, up to a change of coordinates, the vector fields defining the
Heisenberg case (see example~\ref{ex:heis}).
\end{example}

\paragraph{Application: uniform Ball-Box theorem}\

The key feature of  regular points is uniformity:
\begin{itemize}
  \item uniformity of the flag~(\ref{eq:flag});
  \item uniformity w.r.t.\ $p$ of the convergence $(\lambda (M,d), p)
  \rightarrow C_pM$ (as explained by Bella\"{\i}che~\cite[Sect.\ 8]{bel96},
   this uniformity is responsible for the group
   structure of the metric tangent space);
  \item uniformity of distance estimates (see Remark~\ref{pa:bbox_nonuniform}).
\end{itemize}
In particular the last property is essential to compute Hausdorff
dimensions (see Section~\ref{se:hausdim}) or to prove the global convergence of approximate motion
planning algorithms. Recall what we mean by uniformity in this
context: in a neighbourhood of a regular point $p_0$, we can
construct privileged coordinates depending continuously on the
base point $p$ and such that the distance
estimate~(\ref{eq:distestim}) holds with $C_p$ and $\eps_p$
independent of $p$.

As already noticed, all these uniformity properties are lost at
singular points. However, using the desingularization of a sub-Riemannian
manifold, we are able to give a uniform version of distance
estimates.\medskip

Let $\Omega \subset M$ be a compact set. We denote by $r_{\max}$
the maximum of  degrees of nonholonomy at points in  $\Omega$. As noticed in Section~\ref{se:privcoor}, $r_{\max}$ is finite. We assume
that $M$ is an oriented manifold, and we choose a volume form $\omega$ on $M$.

Let $\mathfrak{X}$ be the set of $n$-tuples $\mathbf{X}=(X_{I_1}, \dots, X_{I_n})$ of
brackets of length $|I_i| \leq r_{\max}$. Since $r_{\max}$ is finite, $\mathfrak{X}$ is a finite subset of $\mathit{Lie}(X_1,\dots,X_m)^n$.
 Given $q \in \Omega$ and $\eps
>0$, we define a function $f_{q,\eps}: \mathfrak{X} \to \R$ by
$$
f_{q,\eps}( \mathbf{X}) = \left| \omega_q \big( X_{I_1}(q) \eps^{|I_1|},
\dots, X_{I_n}(q) \eps^{|I_n|} \big) \right|.
$$
We say that $\mathbf{X} \in \mathfrak{X}$ is  an {\em  adapted frame at
$(q,\eps)$} if it achieves the maximum of $f_{q,\eps}$ on $\mathfrak{X}$.

The values at $q$ of an adapted frame at $(q,\eps)$ clearly
form a basis of $T_qM$. Moreover, $q$ being fixed, the adapted
frames at $(q,\eps)$ are adapted frames at $q$ for
$\eps$ small enough.

\begin{theorem}[Uniform Ball-Box theorem~\cite{jea01b}]
\label{th:unifbbox}
There exist positive constants $K$ and $\eps_0$ such that, for
$q \in \Omega$, $\eps < \eps_0$, and any adapted
frame $\mathbf{X}$ at $(q,\eps)$, there holds
$$
\mathrm{Box}_\mathbf{X} (q, \frac{1}{K}\eps) \subset B(q,\eps) \subset
\mathrm{Box}_\mathbf{X} (q, K \eps),
$$
where $\mathrm{Box}_\mathbf{X} (q, \eps) = \{ \exp(x_1 X_{I_1}) \circ
\cdots \circ \exp(x_n X_{I_n})(q)  : \, |x_i| \leq \eps^{|I_i|}, \,
1 \leq i \leq n \}$.
\end{theorem}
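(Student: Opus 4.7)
The approach is to combine a direct Chow-type construction for the inclusion $\mathrm{Box}_{\mathbf{X}}(q,\eps/K)\subset B(q,\eps)$ with a desingularization argument for the reverse inclusion. The crucial role of the extremal condition defining $\mathbf{X}$ is to select, even near singular points, an $n$-tuple of brackets that captures the correct spanning family at the scale $\eps$; the classically adapted frame of Section~\ref{se:privcoor} may degenerate in the limit $q\to$ singular point, whereas the maximizer of $f_{q,\eps}$ does not.

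For the lower inclusion, I would show that it holds for \emph{every} $\mathbf{X}=(X_{I_1},\dots,X_{I_n})\in\mathfrak{X}$ whose values at $q$ span $T_qM$, without invoking the maximizing property. By the construction of the maps $\psi^I_t$ in the proof of Lemma~\ref{le:Apopen}, $\exp(tX_{I})(q)$ is the endpoint of a trajectory of~\eqref{eq:snh} of length at most $N|t|^{1/|I|}$, with $N$ depending only on uniform bounds for the brackets of length $\leq r_{\max}$ on a compact neighborhood of $\Omega$. Since $\mathfrak{X}$ is a finite set and $\Omega$ is compact, $N$ is uniform in $q\in\Omega$, and concatenating the $n$ factors $\exp(x_i X_{I_i})$ with $|x_i|\leq(\eps/K)^{|I_i|}$ yields a trajectory of total length $\leq Nn\eps/K\leq\eps$ as soon as $K\geq Nn$.

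For the upper inclusion, I would apply the desingularization lemma to cover $\Omega$ by finitely many open sets $U_\ell$, each lifted to an equiregular neighborhood $\widetilde{U}_\ell$ with lifted vector fields $\xi_i$ projecting to $X_i$ and with constant degree of nonholonomy $r\geq r_{\max}$. On $\widetilde{U}_\ell$ every point is regular; by Remark~\ref{pa:bbox_nonuniform} one can choose privileged coordinates continuously in the base point, so the constants of Theorem~\ref{le:bbox} are uniform on the compact set $\{(q,0):q\in\Omega\cap\overline{U}_\ell\}$. This gives, for a classically adapted frame $\widetilde{\mathbf{Y}}=(\xi_{J_1},\dots,\xi_{J_{\widetilde{n}_\ell}})$ chosen continuously in $(q,0)$, the equiregular estimate $B^{\widetilde{d}}((q,0),\eps)\subset\mathrm{Box}_{\widetilde{\mathbf{Y}}}((q,0),K'\eps)$. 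Projecting by $\pi$ and using $\pi\circ\exp(t\xi_J)=\exp(tX_J)\circ\pi$ together with the identity $B(q,\eps)=\pi(B^{\widetilde{d}}((q,0),\eps))$ from the desingularization lemma transfers this to a ``generalized box'' estimate on $M$ written in terms of the $\widetilde{n}_\ell$ projected brackets $X_{J_1},\dots,X_{J_{\widetilde{n}_\ell}}$.

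The step I expect to be the main obstacle is then to rewrite this generalized box as an honest $n$-factor box $\mathrm{Box}_{\mathbf{X}}(q,K\eps)$ for the $f_{q,\eps}$-maximizer $\mathbf{X}$. The plan is a two-stage selection argument. First, I would extract from the $\widetilde{n}_\ell$ projected brackets, via a Cramer-type selection based on the weighted volume form $\omega_q(\,\cdot\eps^{|J_{i_1}|},\dots,\cdot\eps^{|J_{i_n}|})$, an $n$-tuple $\mathbf{X}'\in\mathfrak{X}$ whose value $f_{q,\eps}(\mathbf{X}')$ is bounded below by a positive (uniform) fraction of the maximum over $\mathfrak{X}$, and such that the generalized box is contained in $\mathrm{Box}_{\mathbf{X}'}(q,K''\eps)$ up to a bounded factor coming from the Baker--Campbell--Hausdorff expansion of the reordered exponentials. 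Second, because $f_{q,\eps}(\mathbf{X})\geq f_{q,\eps}(\mathbf{X}')$ by the maximizing property of $\mathbf{X}$, a further Cramer argument comparing the two weighted parallelepipeds yields $\mathrm{Box}_{\mathbf{X}'}(q,\eps)\subset\mathrm{Box}_{\mathbf{X}}(q,K'''\eps)$. Combined with the lower inclusion, this produces the uniform Ball-Box with constants $K$ and $\eps_0$ depending only on $\Omega$ and $r_{\max}$.
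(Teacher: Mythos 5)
The paper does not prove Theorem~\ref{th:unifbbox}: it explicitly refers the reader to~\cite{jea01b}. So your proposal cannot be checked against an in-paper argument, only assessed on its own terms; its overall architecture (a lower inclusion valid for any spanning $n$-tuple of brackets, an upper inclusion in which the maximality of $f_{q,\eps}(\mathbf{X})$ enters through a Cramer-rule bound on the coefficients expressing an arbitrary bracket in the frame $\mathbf{X}$) is a reasonable road map, but two of its steps are genuine gaps rather than routine verifications. For the lower inclusion, you justify $d\bigl(q,\exp(tX_I)(q)\bigr)\leq N|t|^{1/|I|}$ by appeal to the maps $\psi^I_t$ of Lemma~\ref{le:Apopen}; but $\psi^I_t(q)$ and $\exp(tX_I)(q)$ are different points, agreeing only to first order in $t$, and an $o(t)$ discrepancy in coordinates can cost much more than $o(t)$ in sub-Riemannian distance (a displacement of size $t^{3/2}$ in a weight-two direction costs of order $t^{3/4}$). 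The estimate is true, but it requires its own argument (an iterated-flow or Gronwall estimate in the spirit of Lemma~\ref{le:x-xhat}), together with a verification that the constants are uniform over $\Omega$; it does not follow from the construction you cite.

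For the upper inclusion, the step you yourself flag as the main obstacle is exactly where the substance of the theorem lies, and your plan does not yet bridge it. After desingularization and projection you obtain a box with $\widetilde{n}>n$ exponential factors; absorbing the extra factors into $\mathrm{Box}_{\mathbf{X}}(q,K\eps)$ requires two things that are not supplied. First, the Cramer bound $|a_i|\leq C\eps^{|I_i|-|J|}$ coming from the maximality of $f_{q,\eps}(\mathbf{X})$ holds a priori only at the single point $q$, whereas to control a flow $\exp(xX_J)$ with $|x|\leq\eps^{|J|}$ starting from an arbitrary point of the partially built box one needs the coefficient bounds on a full $\eps$-neighbourhood, i.e.\ one must show that the maximizer at $(q,\eps)$ remains quasi-maximizing at nearby points; this is a separate and delicate estimate, not ``a further Cramer argument''. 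Second, ``up to a bounded factor coming from the Baker--Campbell--Hausdorff expansion'' conceals the reordering of non-commuting flows with weighted (not Euclidean) error control, which again needs the same kind of uniform order estimates. As written, the proposal is a plausible outline whose two hardest bridges remain to be built, and it should not be regarded as a proof.
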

We do not give the proof of this theorem in these notes, we refer the reader to~\cite{jea01b}.

Of course, when the point $q$ is fixed, the estimate above is equivalent to the one
of the Ball-Box theorem for $\eps$ smaller than some $\eps_1(q)>0$.
However, the main difference is that here $\eps_0$ does not depend on $q$, whereas in the Ball-Box theorem $\eps_1=\eps_1(q)$ can be infinitely close to $0$ as $q$ varies.

As a direct consequence of the Uniform Ball-Box theorem,
 we have an estimate of the volume of a small sub-Riemannian ball.
 Let $\mathrm{vol}_\omega$ be the measure on $M$ associated with $\omega$, \emph{i.e.}\/ for any measurable subset $A \subset M$ we set $\mathrm{vol}_\omega(A) = \int_A \omega$.

\begin{corollary}
\label{th:volballs}
There exist positive constants $K$ and $\eps_0$ such that, for all
$q \in \Omega$ and $\eps < \eps_0$,
$$
\frac{1}{K} \max_{\mathbf{X}} f_{q,\eps}( \mathbf{X}) \leq \mathrm{vol}_\omega (B(q,\eps))
\leq K  \max_{\mathbf{X}} f_{q,\eps}( \mathbf{X}),
$$
the maximum of $f_{q,\eps}( \mathbf{X}) = \left| \omega_q \big( X_{I_1}(q) \eps^{|I_1|},
\dots, X_{I_n}(q) \eps^{|I_n|} \big) \right|$ being taken over all families
$\mathbf{X}=(X_{I_1}, \dots, X_{I_n})$ of
brackets of length $|I_i| \leq r_{\max}$.

If moreover all points in $\Omega$ are regular, then  for all
$q \in \Omega$ and $\eps < \eps_0$,
\begin{equation}
\label{eq:volreg}
    \frac{1}{K} \eps^Q \leq \mathrm{vol}_\omega (B(q,\eps))
\leq K  \eps^Q,
\end{equation}
where $Q = \sum_{i=1}^n w_i(q)$ does not depend on $q$.
\end{corollary}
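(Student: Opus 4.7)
My plan is to reduce the volume estimate to a computation on boxes via the Uniform Ball-Box theorem, and then compute the box volume by a change of variables formula, taking care to keep all estimates uniform in $q\in\Omega$.

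First I would apply Theorem~\ref{th:unifbbox}: fix $K,\eps_0$ as provided and, for each pair $(q,\eps)$ with $q\in\Omega$ and $\eps<\eps_0$, fix an adapted frame $\mathbf{X}=(X_{I_1},\dots,X_{I_n})$ at $(q,\eps)$. Then
\begin{equation*}
\mathrm{vol}_\omega\bigl(\mathrm{Box}_{\mathbf{X}}(q,\eps/K)\bigr)\le \mathrm{vol}_\omega\bigl(B(q,\eps)\bigr)\le \mathrm{vol}_\omega\bigl(\mathrm{Box}_{\mathbf{X}}(q,K\eps)\bigr),
\end{equation*}
so the problem reduces to estimating the $\omega$-measure of boxes $\mathrm{Box}_{\mathbf{X}}(q,c\eps)$ uniformly in $q$ and $\mathbf{X}$.

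Next I would compute the volume of a box via the change of variables induced by the smooth map
\begin{equation*}
\Phi_{q,\mathbf{X}}(x_1,\dots,x_n)=\exp(x_1X_{I_1})\circ\cdots\circ\exp(x_nX_{I_n})(q),
\end{equation*}
which is a local diffeomorphism on a neighbourhood of $0$ sending the parameter cube $C_\eps=\prod_i[-\eps^{|I_i|},\eps^{|I_i|}]$ onto $\mathrm{Box}_{\mathbf{X}}(q,\eps)$. A direct computation gives $\partial_{x_i}\Phi_{q,\mathbf{X}}(0)=X_{I_i}(q)$, so the Jacobian density satisfies $\Phi_{q,\mathbf{X}}^*\omega=J_{q,\mathbf{X}}(x)\,dx_1\wedge\cdots\wedge dx_n$ with $J_{q,\mathbf{X}}(0)=\omega_q(X_{I_1}(q),\dots,X_{I_n}(q))$. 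Since $\mathfrak{X}$ is finite, $\Omega$ is compact, and all brackets are smooth, the functions $J_{q,\mathbf{X}}$ are uniformly continuous on some fixed neighbourhood of $0$ as $(q,\mathbf{X})$ varies. Hence, by possibly shrinking $\eps_0$, one has $\tfrac{1}{2}|J_{q,\mathbf{X}}(0)|\le |J_{q,\mathbf{X}}(x)|\le 2|J_{q,\mathbf{X}}(0)|$ on $C_\eps$, and therefore
\begin{equation*}
\mathrm{vol}_\omega\bigl(\mathrm{Box}_{\mathbf{X}}(q,c\eps)\bigr) = \int_{C_{c\eps}}|J_{q,\mathbf{X}}(x)|\,dx \ \asymp\ |\omega_q(X_{I_1}(q),\dots,X_{I_n}(q))|\cdot\eps^{|I_1|+\cdots+|I_n|},
\end{equation*}
the implicit constants depending only on $n$, $K$ and the uniform Lipschitz constants. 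Now the maximality property defining an adapted frame at $(q,\eps)$ says exactly that $|\omega_q(X_{I_1}(q),\dots,X_{I_n}(q))|\eps^{|I_1|+\cdots+|I_n|}=f_{q,\eps}(\mathbf{X})=\max_{\mathfrak{X}}f_{q,\eps}$, which gives the first claimed estimate.

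For the second statement, assume every point of $\Omega$ is regular, so the weights $w_i(q)$ and hence $Q=\sum_i w_i$ are locally constant; I may assume they are constant on $\Omega$. Any $n$-tuple of brackets $\mathbf{X}=(X_{I_1},\dots,X_{I_n})$ with $|I_i|\le r_{\max}$ satisfies $X_{I_i}(q)\in\Delta^{|I_i|}(q)$, so the $n$ vectors can span $T_qM$ only if $|I_1|+\cdots+|I_n|\ge Q$, with equality attained when each $X_{I_i}$ is chosen so that its class lies in the right graded piece (this is possible at every point of $\Omega$ by regularity). Since $r_{\max}$ is finite, the contribution of frames with $|I_1|+\cdots+|I_n|>Q$ to $f_{q,\eps}$ is an $O(\eps^{Q+1})$ correction, so the maximum is attained (up to a uniform factor) on frames with $|I_1|+\cdots+|I_n|=Q$. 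For such frames $f_{q,\eps}(\mathbf{X})=|\omega_q(X_{I_1}(q),\dots,X_{I_n}(q))|\,\eps^Q$, and by continuity plus compactness of $\Omega$ the function $q\mapsto\max_{\mathbf{X},\,\sum|I_i|=Q}|\omega_q(X_{I_1}(q),\dots,X_{I_n}(q))|$ is bounded above and below by positive constants, yielding \eqref{eq:volreg}.

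The main obstacle is the uniform control of the Jacobian $J_{q,\mathbf{X}}$: since the adapted frame $\mathbf{X}$ legitimately depends on both $q$ and $\eps$, one must exploit the finiteness of $\mathfrak{X}$ together with the compactness of $\Omega$ to obtain a single neighbourhood of $0$ in parameter space and a single $\eps_0$ on which the linearisation $\Phi_{q,\mathbf{X}}\approx d\Phi_{q,\mathbf{X}}(0)$ is accurate enough for all relevant $(q,\mathbf{X})$ simultaneously.
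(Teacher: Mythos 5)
The paper itself gives no proof of this corollary (it is asserted as a ``direct consequence'' of Theorem~\ref{th:unifbbox}, whose proof is deferred to~\cite{jea01b}), so you are supplying an argument the text omits. Your overall strategy --- sandwich $B(q,\eps)$ between boxes, compute box volumes via the change of variables $\Phi_{q,\mathbf{X}}$, then use the maximality defining adapted frames --- is the right one, and your treatment of the equiregular case (any spanning family of brackets has $\sum_i|I_i|\ge Q$, equality is achieved by adapted frames, and compactness bounds the corresponding determinants away from $0$) is sound.

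There is, however, a genuine gap at the step you yourself identify as the crux: the claim that $\tfrac12|J_{q,\mathbf{X}}(0)|\le|J_{q,\mathbf{X}}(x)|\le 2|J_{q,\mathbf{X}}(0)|$ on $C_\eps$ ``by uniform continuity, possibly shrinking $\eps_0$''. Compactness of $\Omega$ and finiteness of $\mathfrak{X}$ do give a uniform \emph{additive} modulus of continuity, $|J_{q,\mathbf{X}}(x)-J_{q,\mathbf{X}}(0)|\le\delta(\eps)$ with $\delta(\eps)\to 0$; but to convert this into a \emph{multiplicative} comparison you would need $\delta(\eps)\le\tfrac12|J_{q,\mathbf{X}}(0)|$, and $|J_{q,\mathbf{X}}(0)|=|\omega_q(X_{I_1}(q),\dots,X_{I_n}(q))|$ admits no uniform positive lower bound near a singular point, no matter how small $\eps_0$ is chosen. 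Concretely, in the Martinet case take $q=(x_0,0,0)$ and $\mathbf{X}=(X_1,X_2,X_{12})$, which is adapted at $(q,\eps)$ precisely when $|x_0|\gtrsim\eps$: then $J_{q,\mathbf{X}}(0)=x_0$ while $J_{q,\mathbf{X}}$ varies by roughly $\eps$ over the cube and actually vanishes at interior points of $C_\eps$ when $|x_0|$ is comparable to $\eps$; the pointwise lower bound is simply false, and the additive error is not $o(|J_{q,\mathbf{X}}(0)|)$. What makes the estimate work is not soft compactness but the \emph{maximality} of $f_{q,\eps}(\mathbf{X})$ over $\mathfrak{X}$: expanding $X_{I_j}\circ\Phi_{q,\mathbf{X}}$ in the frame $X_{I_1},\dots,X_{I_n}$, the coefficients are controlled by ratios $f_{q,\eps}(\mathbf{X}')/f_{q,\eps}(\mathbf{X})\le 1$, which yields $|J_{q,\mathbf{X}}(x)|\le C\,|J_{q,\mathbf{X}}(0)|$ on the box and, after restriction to a suitable sub-box on which $\Phi_{q,\mathbf{X}}$ is moreover shown to be injective, the matching lower bound for the integral. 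This is the delicate part of the Nagel--Stein--Wainger/Jean analysis; as written, neither your upper nor your lower volume bound is justified, and the lower bound additionally requires uniform injectivity of $\Phi_{q,\mathbf{X}}$ on the cube, which again does not follow from the inverse function theorem when $J_{q,\mathbf{X}}(0)$ degenerates.
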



\subsection{Hausdorff dimension}
\label{se:hausdim}

Consider a metric space $(M,d)$ and denote by $\diam S$ the diameter of a set $S \subset M$.
Let $k \geq 0$ be a real number. For every subset $A \subset M$, we
define the
\emph{$k$-dimensional Hausdorff measure} $\hh^k$ of $A$  as $\hh^k(A)
= \lim_{\eps \to 0^+} \hh^k_\eps(A)$, where
$$
\hh^k_\eps(A) = \inf \left\{ \sum_{i=1}^\infty  \left(\diam S_i\right)^k
\, : \, A \subset \bigcup_{i=1}^\infty S_i, \ S_i \hbox{ closed set}, \  \diam S_i \leq \eps \right\},
$$
and the \emph{$k$-dimensional spherical Hausdorff measure} $\ss^k$  of
$A$  as $\ss^k(A)
= \lim_{\eps \to 0^+} \ss^k_\eps(A)$, where
$$
\ss^k_\eps(A) = \inf \left\{ \sum_{i=1}^\infty  \left(\diam
S_i\right)^k \, : \, A \subset \bigcup_{i=1}^\infty S_i, \ S_i \hbox{ is
  a ball}, \ \diam
S_i \leq \eps  \right\}.
$$

In the Euclidean space $\R^n$,  $k$-dimensional Hausdorff measures
are often
defined as $2^{-k}\alpha(k)\hh^k$ and $2^{-k}\alpha(k)\ss^k$, where
$\alpha(k)$ is defined from the
usual gamma function as $\alpha(k) = \Gamma (\frac{1}{2})^k / \Gamma
(\frac{k}{2}+1)$. This normalization
factor is necessary for the $n$-dimensional Hausdorff measure and
the Lebesgue measure to coincide on $\R^n$.

For a given set $A \subset M$, $\hh^k(A)$ is a decreasing function
of $k$, infinite when $k$ is smaller than a certain value, and zero
when $k$ is greater than this value. We call \emph{Hausdorff
dimension} of $A$ the real number
$$
\dim_\hh A = \sup \{ k \, : \, \hh^k (A) = \infty \} = \inf \{ k \,
: \, \hh^k (A) = 0 \}.
$$
Note that $\hh^k \leq \ss^k \leq 2^k \hh^k$, so the Hausdorff
dimension can be defined equivalently from Hausdorff or spherical
Hausdorff measures.\medskip

There exist only few results on Hausdorff measures in sub-Riemannian geometry, except for specific cases~\cite{Agrachev2010,Ghezzi2012}. The most general result is the following one.

\begin{theorem}
\label{th:dimhausreg}
Let $(M,d)$ be an equiregular Carnot-Carath\'{e}odory space and $p$ a point in $M$. Then the
Hausdorff dimension of a small enough ball $B(p,r)$ is $\dim_\hh B(p,r) = Q$, where
$$
Q= \sum_{i=1}^n w_i(p) = \sum_{i\geq 1} i \left( \dim \Delta^i(p) -  \dim \Delta^{i-1}(p) \right)
$$
does not depend on $p$. Moreover  $\hh^{Q} (B(p,r))$ is finite.
\end{theorem}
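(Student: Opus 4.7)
The plan is to derive the Hausdorff dimension from the two-sided volume estimate of Corollary~\ref{th:volballs}. Since $(M,d)$ is equiregular, there exist constants $K,\eps_0>0$ (depending on a compact neighborhood of $p$) such that
$$
\frac{1}{K}\eps^{Q} \leq \mathrm{vol}_\omega(B(q,\eps)) \leq K\eps^{Q}
$$
for all $q$ near $p$ and all $\eps<\eps_0$, with $Q = \sum_{i=1}^n w_i$ constant (this is where equiregularity is crucial, as it provides \emph{uniform} bounds both in $q$ and in the exponent). I would fix $r$ small enough that $B(p,2r)$ lies in an equiregular compact neighborhood on which the Corollary applies, and work inside it.

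For the upper bound on $\dim_\hh B(p,r)$, I would carry out a standard Vitali-type packing argument. Choose a maximal $\eps$-separated set $\{q_i\}$ in $B(p,r)$; then the balls $B(q_i,\eps/2)$ are pairwise disjoint and contained in $B(p,r+\eps)$, while the $B(q_i,\eps)$ cover $B(p,r)$. Using the lower and upper volume estimates,
$$
N(\eps)\cdot\frac{1}{K}\Bigl(\frac{\eps}{2}\Bigr)^{Q}
\leq \mathrm{vol}_\omega\!\bigl(B(p,r+\eps)\bigr)
\leq K(r+\eps)^{Q},
$$
so the covering number $N(\eps)$ is bounded by $C r^{Q}/\eps^{Q}$ for $\eps\leq r$. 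Each $B(q_i,\eps)$ has diameter at most $2\eps$, so $\hh^{Q}_{2\eps}(B(p,r))\leq C r^{Q}(2)^{Q}$, uniformly in $\eps$. Letting $\eps\to 0$ yields $\hh^{Q}(B(p,r))<\infty$, hence $\dim_\hh B(p,r)\leq Q$.

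For the lower bound, I would compare any covering directly to the volume. Given any cover $B(p,r)\subset\bigcup_{i}S_i$ with $\diam S_i\leq\eps<\eps_0$, pick $x_i\in S_i$; then $S_i\subset B(x_i,\diam S_i)$, so by the upper volume estimate
$$
\mathrm{vol}_\omega(S_i)\leq \mathrm{vol}_\omega\!\bigl(B(x_i,\diam S_i)\bigr)\leq K(\diam S_i)^{Q}.
$$
Summing and using subadditivity of $\mathrm{vol}_\omega$,
$$
\mathrm{vol}_\omega(B(p,r))\leq K\sum_i (\diam S_i)^{Q}.
$$
Taking the infimum over admissible covers gives $\hh^{Q}_\eps(B(p,r))\geq K^{-1}\mathrm{vol}_\omega(B(p,r))>0$, and letting $\eps\to 0$ gives $\hh^{Q}(B(p,r))>0$, hence $\dim_\hh B(p,r)\geq Q$.

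Combining the two inequalities yields $\dim_\hh B(p,r)=Q$ with $\hh^{Q}(B(p,r))\in(0,\infty)$. The only substantive step is the Vitali-type covering in the upper-bound argument; the lower bound is essentially tautological once one has the uniform volume comparison, so the real content of the theorem has been absorbed into Corollary~\ref{th:volballs}, which itself rests on the Uniform Ball-Box Theorem. A minor point to verify is that the constants in the volume estimate are uniform on a compact neighborhood of $p$ (so that, in the covering argument, the centers $q_i$ and $x_i$ all lie in a set where the estimate applies); this is where I would explicitly invoke the equiregularity hypothesis to avoid the obstruction discussed in Remark~\ref{pa:bbox_nonuniform}.
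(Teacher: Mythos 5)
Your proposal is correct and follows essentially the same route as the paper: both reduce the theorem to the uniform two-sided volume estimate of Corollary~\ref{th:volballs}, prove $\dim_\hh B(p,r)\leq Q$ (with $\hh^Q$ finite) by a maximal packing/covering count, and prove $\dim_\hh B(p,r)\geq Q$ by comparing an arbitrary fine cover with the volume via subadditivity. The only cosmetic difference is that the paper phrases the argument in terms of the spherical measure $\ss^Q$ and ball coverings, while you work with $\hh^Q$ directly by enclosing each covering set in a ball; these are equivalent since $\hh^k\leq\ss^k\leq 2^k\hh^k$.
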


\begin{proof}
Fix a volume form $\omega$ on $B(p,r)$ (it is possible for a  small enough $r$), and denote by $\mathrm{vol}_\omega$ the associated measure.
It results from Corollary~\ref{th:volballs} that, for $q \in B(p,r)$ and $\eps$ small enough,
\begin{equation}
\label{eq:volequireg}
\frac{1}{K} \eps^Q \leq \mathrm{vol}_\omega (B(q,\eps))
\leq K  \eps^Q.
\end{equation}

Define $N_\eps$ to be the maximal number of disjoints balls of radius $\eps$ included in $B(p,r)$, and consider such a family
$B(q_i,\eps)$, $i=,\dots,N_\eps$, of disjoints balls. By~\eqref{eq:volequireg},
$$
\frac{1}{K}  \eps^Q N_\eps \leq \mathrm{vol}_\omega (B(p,r)) \quad \Rightarrow \quad N_\eps  \leq K \eps^{-Q} \mathrm{vol}_\omega (B(p,r)).
$$
On the other hand  the union $\bigcup_i B(q_i,2\eps)$ covers $B(p,r)$, and by Theorem~\ref{th:unifbbox}  every ball $B(q_i,2\eps)$ is of diameter $\geq  \frac{4}{K} \eps$ if $\eps$ is small enough. This implies
$$
\ss^Q (B(p,r)) \leq \liminf_{\eps \to 0} N_\eps \left(\frac{4  \eps}{K}\right)^Q < \infty.
$$
Therefore $\dim_\hh B(p,r) \leq Q$.

Conversely, let $\bigcup_i B(q_i,r_i)$ be a covering of $B(p,r)$ with
balls of diameter not greater than $\eps$. If $\eps$ is small enough,
every $r_i$ is smaller than $\eps_0$ and there holds
$$
\mathrm{vol}_\omega (B(p,r)) \leq \sum_i \mathrm{vol}_\omega (B(q_i,r_i)) \leq K
\sum_i r_i^Q.
$$
As a consequence, we have $\ss^Q (B(p,r)) \geq \mathrm{vol}_\omega (B(p,r))
/K$, which in turn implies $\dim_\hh B(p,r) \geq Q$. This ends the
proof.
\end{proof}

When $(M,d)$ is not equiregular, the Hausdorff dimension of balls
centered at singular points behaves in a different way. Let us show it
on an example.

Consider the Martinet space (see Example~\ref{ex:martinet}), that is,
$\R^3$ endowed with the sub-Riemannian distance associated with the
vector fields
$$
X_1  = {\partial_x}  \quad \mbox{ and } \quad
X_2 = {\partial_y} + \frac{x^2}{2}  {\partial_z}.
$$
A point $q=(x,y,z)$ is regular if $x\neq 0$ and in this case $\sum_i
w_i(q)=4$, otherwise it is singular and $\sum_i w_i(q)=5$.

\begin{lemma}
    \label{le:dimhausmartinet}
Let $p$ a point in the Martinet space.
\begin{itemize}
  \item If $p$ is regular, then $\dim_\hh B(p,r) = 4$, and $\hh^4
    (B(p,r))$ is finite.
  \item If $p$ is singular, then $\dim_\hh B(p,r) = 4$, but $\hh^4
    (B(p,r))$ is not finite.
\end{itemize}
\end{lemma}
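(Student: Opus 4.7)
My plan is to treat the regular and singular cases separately; throughout, the statement is understood for $r$ sufficiently small. The regular case is immediate from Theorem~\ref{th:dimhausreg}: the singular locus is the plane $\{x=0\}$, so if $p$ is regular then for $r < |x(p)|/2$ the ball $B(p,r)$ lies in the equiregular open set $\{x\neq 0\}$, on which the growth vector is constantly $(2,3)$ and the pointwise weights are $(1,1,2)$. This gives $Q = 4$, $\dim_{\hh} B(p,r) = 4$, and $\hh^{4}(B(p,r)) < \infty$.

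For the singular case I take $p = 0$. The essential input is the uniform volume estimate of Corollary~\ref{th:volballs}. The only nonzero iterated brackets up to length $r_{\max}=3$ are $X_{1}, X_{2}, X_{12}=x\partial_{z}, X_{112}=\partial_{z}$; a direct computation of $f_{q,\eps}$ on the two adapted frames $(X_{1}, X_{2}, X_{12})$ and $(X_{1}, X_{2}, X_{112})$ yields the values $|x(q)|\eps^{4}$ and $\eps^{5}$. Taking $\omega = dx\wedge dy \wedge dz$, the corollary then gives
\[
\frac{1}{C}\max\bigl(|x(q)|,\eps\bigr)\eps^{4} \leq \mathrm{vol}_{\omega}\bigl(B(q,\eps)\bigr) \leq C\max\bigl(|x(q)|,\eps\bigr)\eps^{4}
\]
uniformly for $q$ near $0$ and $\eps$ small. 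I then decompose $B(0,r)$ into the dyadic slabs $E_{k} = \{2^{k}\eps \leq |x| \leq 2^{k+1}\eps\} \cap B(0,r)$ for $k = 0,\dots,K := \lfloor\log_{2}(r/\eps)\rfloor$, together with the central slab $\{|x|\leq \eps\} \cap B(0,r)$. The Ball-Box theorem at $0$ yields the Euclidean lower bound $\mathrm{vol}_{\omega}(E_{k}) \geq c\,2^{k}\eps\, r^{4}$.

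For the upper bound $\dim_{\hh}B(0,r) \leq 4$, I cover each slab by a Vitali family of $\eps$-balls. Since a ball centered in $E_{k}$ has volume at least $c\,2^{k}\eps^{5}$, covering $E_{k}$ needs at most $Cr^{4}/\eps^{4}$ balls, and summing over the $K + O(1)$ slabs yields a total count $N_{\eps} \leq C r^{4}\log(r/\eps)/\eps^{4}$. Hence for every $s > 4$,
\[
\hh^{s}_{\eps}\bigl(B(0,r)\bigr) \leq N_{\eps}(2\eps)^{s} \leq C r^{4}\log(r/\eps)\,\eps^{s-4} \longrightarrow 0 \quad \text{as } \eps \to 0,
\]
so $\hh^{s}(B(0,r)) = 0$ and $\dim_{\hh} B(0,r) \leq 4$.

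The main obstacle is showing $\hh^{4}(B(0,r)) = +\infty$, i.e.\ that the logarithmic factor above is intrinsic and not a covering artifact. Given any admissible cover $\{S_{i}\}$ of $B(0,r)$ with $\delta_{i} := \diam S_{i} \leq \eps$, enclose $S_{i}$ in a ball $B(q_{i},\delta_{i})$ so that $\mathrm{vol}_{\omega}(S_{i}) \leq C\max(|x(q_{i})|,\delta_{i})\delta_{i}^{4}$. The crucial observation is that because $\delta_{i} \leq \eps$, the $x$-projection of $B(q_{i},\delta_{i})$ has length at most $2\eps$, so each such ball meets at most a bounded number $M$ of the dyadic slabs. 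For balls meeting a fixed $E_{k}$ with $k \geq 0$, one has $|x(q_{i})|$ of order $2^{k}\eps$, hence $\mathrm{vol}_{\omega}(B(q_{i},\delta_{i})) \leq C\,2^{k}\eps\,\delta_{i}^{4}$, and the covering inequality $\sum_{i \in I_{k}} \mathrm{vol}_{\omega}(B(q_{i},\delta_{i})) \geq \mathrm{vol}_{\omega}(E_{k}) \geq c\,2^{k}\eps\, r^{4}$ reduces to $\sum_{i \in I_{k}}\delta_{i}^{4} \geq c\, r^{4}$, uniformly in $k$. Summing over $k = 0,\dots,K$ with bounded multiplicity,
\[
\sum_{i}\delta_{i}^{4} \geq \frac{1}{M}\sum_{k=0}^{K}\sum_{i \in I_{k}}\delta_{i}^{4} \geq \frac{c K r^{4}}{M} \geq c' r^{4}\log(r/\eps),
\]
so, taking the infimum over covers, $\hh^{4}_{\eps}(B(0,r)) \geq c' r^{4}\log(r/\eps) \to \infty$ as $\eps \to 0$. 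Therefore $\hh^{4}(B(0,r)) = +\infty$, which in particular gives $\dim_{\hh} B(0,r) \geq 4$, and combined with the upper bound yields $\dim_{\hh} B(0,r) = 4$.
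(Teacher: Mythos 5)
Your proof is correct and follows essentially the same route as the paper: the regular case via Theorem~\ref{th:dimhausreg}, and the singular case via the volume estimate $\mathrm{vol}_\omega(B(q,\eps))\asymp \eps^4\max(|x(q)|,\eps)$ from Corollary~\ref{th:volballs}, combined with a slicing of $B(0,r)$ by the value of $|x|$ (your dyadic slabs play exactly the role of the paper's arithmetic slabs of width $\eps$, whose harmonic sum produces the same logarithm). Your treatment is in fact slightly more careful than the paper's on one point — you explicitly invoke the bounded overlap of a small ball with the slabs when summing $\sum_k\sum_{i\in I_k}\delta_i^4$, a multiplicity issue the paper passes over in silence.
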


\begin{proof}
When $p$ is regular, the result is a direct consequence of Theorem~\ref{th:dimhausreg}. Let us consider a singular point $p$ and a radius $r>0$. Since regular points form an open set, $B(p,r)$ contains small balls centered at regular points, and thus $\dim_\hh B(p,r) \geq 4$. Moreover, it results from Corollary~\ref{th:volballs} that, for $q=(x,y,z)$ close enough from $p$ and for $\eps >0$ small enough,

\begin{equation}
\label{eq:volmart}
\frac{1}{K} \eps^4 \max (|x|, \eps)  \leq \mathrm{vol}_\omega (B(q,\eps))
\leq K  \eps^4 \max (|x|, \eps).
\end{equation}

We proceed as in the proof of Theorem~\ref{th:dimhausreg}. Define
$N_\eps$ to be the maximal number of disjoints balls of radius $\eps$
included in $B(p,r)$, and consider such a family
$B(q_i,\eps)$, $i=,\dots,N_\eps$, of disjoints balls, with
$q_i=(x_i,y_i,z_i)$. Notice that the first coordinate $x$ is of
nonholonomic order $\leq 1$ at any point; this implies that there
exists a constant $K'>0$ such that
$$
B(q_i,\eps) \subset B(p,r) \cap \{ q=(x,y,z) \ : \ |x-x_i| \leq K' \eps \}.
$$
As a consequence, for an integer $k$, every ball $B(q_i,\eps)$ such
that $(k-1) \eps \leq |x_i| < k \eps$ is included in  the set $B(p,r)
\cap \{ q=(x,y,z) \ : \ |x| \in ((k-1-K') \eps, (k+K')\eps] \}$. The
volume of the latter set is smaller than $K'' \eps$, where $K''$ is a
constant (depending neither on $k$ nor $\eps$). Then it results
from~\eqref{eq:volmart} that
$$
K N_\eps(k) k \eps^5 \leq K'' \eps,
$$
where $N_\eps(k)$ is the number of points $q_i$ such that $(k-1) \eps
\leq |x_i| < k \eps$. The Ball-Box Theorem implies that $N_\eps(k)=0$
when $k>K'r/\eps$, and hence
$$
N_\eps = \sum_{k=1}^{\lceil K'r/\eps\rceil} N_\eps(k) \leq
\frac{\mathrm{const}}{\eps^4} \sum_{k=1}^{\lceil K'r/\eps\rceil}
\frac{1}{k} \leq \frac{\mathrm{const}}{\eps^4} \log \left(\frac{1}{\eps}\right),
$$
where $\lceil t \rceil$ denotes the integer part of a number $t$.
Now the union $\bigcup_i B(q_i,2\eps)$ covers $B(p,r)$ and every ball
$B(q_i,2\eps)$ is of diameter $\geq  \frac{4}{K} \eps$ if $\eps$ is
small enough. This implies that, for any real number $s > 4$,
$$
\ss^s (B(p,r)) \leq \lim_{\eps \to 0}\left(\frac{4  \eps}{K}\right)^s
N_\eps \leq \lim_{\eps \to 0} \mathrm{const}\,  \eps^{s-4} \log
\left(\frac{1}{\eps}\right) =0.
$$
Consequently $\dim_\hh B(p,r) \leq 4$, and hence $\dim_\hh B(p,r) =4$,
since the converse inequality holds.\medskip

We are left to show that $\hh^4 (B(p,r))$, or equivalently $\ss^4
(B(p,r))$, is not finite.
Let $\bigcup_i B(q_i,r_i)$ be a covering of $B(p,r)$ with balls of
diameter not greater than $\eps$.
For an integer $k\geq 1$, denote by $I_k$ the set of indices such that
$\bigcup_{i\in I_k} B(q_i,r_i)$ is a covering of the set $B(p,r) \cap
\{ q=(x,y,z) \ : \ |x| \in ((k-1) \eps, k\eps] \}$. Thus
$$
\sum_{i\in I_k} \mathrm{vol}_\omega (B(q_i,r_i)) \geq \mathrm{const} \, \eps.
$$
On the other hand $i\in I_k$ implies $\mathrm{vol}_\omega (B(q_i,r_i)) \leq
\mathrm{const} \ r_i^4 k \eps$, and so
$$
\sum_{i\in I_k} r_i^4 \geq \frac{\mathrm{const}}{k}.
$$
Summing up over $k$, we obtain, for a small enough $\eps$,
$$
\sum_{i} r_i^4 \geq \mathrm{const}\,  \log \left(\frac{1}{\eps}\right).
$$
As a consequence, $\ss^4_\eps (B(p,r)) \geq \mathrm{const}\,  \log
(\frac{1}{\eps})$, and so $\ss^4_\eps (B(p,r))=\infty$. This ends the
proof.
\end{proof}


\appendix

\newpage

\noindent {\Large \textbf{Appendix}}
\addcontentsline{toc}{section}{
    \textbf{Appendix}}

\section{Flows of vector fields}
\label{se:flowsvf}
This section is dedicated to the proof of Campbell-Hausdorff type
formulas for flows of vector fields. The result in
Section~\ref{se:chforflows} has been used in Section~\ref{se:chow}, the one in
Section~\ref{se:pushforward} will be necessary for the next section.\medskip

Let $U$ be an open subset of $\R^n$ and $VF(U)$ the set of smooth
vector fields on $U$. Given a vector field $X \in VF(U)$, we denote   its flow by
$\exp(tX)$.

\subsection{Campbell-Hausdorff formula for flows}
\label{se:chforflows}


We will need in this section the Campbell-Hausdorff formula which we recall
briefly here (for a more detailed presentation see for
instance \cite[Ch.\ II]{bou72}).  Let $x$ and $y$ be two non
commutative indeterminates, and $[x,y]=xy-yx$ their
commutator, also denoted by $[x,y]=(\ad x)y$. The
length of an iterated commutator $(\ad x_1) \cdots (\ad x_{k-1}) x_k$,
where each $x_1,\dots, x_k$ equals $x$ or $y$, is defined to be
the number of occurrences $k$ of $x$ and $y$. Define
also $e^x$ and $e^y$ to be  the series $\sum_{k \geq 0}
\frac{x^{k}}{k!}$ and $\sum_{k \geq 0} \frac{y^{k}}{k!}$. Then we have
$e^x e^y = e^{H(x,y)}$ in the sense of formal power series, where
\begin{eqnarray}
\label{eq:chseries}
H(x,y) = x+y+ \frac{1}{2} [x,y] + R(x,y), 
\end{eqnarray}
and $R(x,y)$ is a series whose terms are linear combination of iterated commutators of $x$ and $y$ of length greater than 2. For an integer $N$ we denote by $H_N(x,y)$ the partial sum of $H(x,y)$ containing only iterated commutators of length not greater than $N$. In particular, $H_1=x+y$ and $H_2= x+y+ \frac{1}{2} [x,y]$.\medskip

Consider now two vector fields $X,Y \in VF(U)$. Given $t \in \R$ and an integer $N$, $H_N(tY,tX)$ is a smooth vector field on $U$ which writes as $\sum_{i=1}^N t^i Y_i$, where the vector fields $Y_1,\dots, Y_N$ belong to the Lie algebra generated by $X$ and $Y$.

\begin{lemma}
\label{le:taylor_CH}
Let $p\in M$. There exist positive constants $\delta$ and $C$ such that $|t| < \delta$ implies
$$
\left\| \exp(tX) \circ \exp (tY) (p) - \exp(H_N(tY,tX))(p)\right\| \leq C |t|^{N+1}.
$$
\end{lemma}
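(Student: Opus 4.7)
The plan is to reduce the pointwise estimate to an identity between two formal power series of differential operators at $p$, and then apply the formal Campbell--Hausdorff formula~\eqref{eq:chseries}.

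First, it is enough to test against smooth functions. Applying the estimate with $f = x^i$ for each coordinate function gives the norm estimate component by component, so the lemma reduces to showing
$$
f(\exp(tX)\circ\exp(tY)(p)) - f(\exp(H_N(tY,tX))(p)) = O(|t|^{N+1})
$$
uniformly for small $t$ and for every smooth $f$ defined near $p$.

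For the left-hand side, a double application of Taylor's theorem (first along the flow of $X$, then along the flow of $Y$) gives
$$
f(\exp(tX)\circ\exp(tY)(p)) = \sum_{k+j \le N}\frac{t^{k+j}}{k!\,j!}(Y^{j} X^{k}f)(p) + O(|t|^{N+1}),
$$
which is exactly the truncation modulo $t^{N+1}$ of the formal series $(e^{tY}e^{tX}f)(p)$, viewed as a power series in $t$ whose coefficients are differential operators applied to $f$ at $p$. For the right-hand side, set $Z_t = H_N(tY,tX) = \sum_{i=1}^{N} t^i Y_i$; since $\sigma\mapsto \exp(\sigma Z_t)(p)$ is the flow of $Z_t$, Taylor's formula with integral remainder at $\sigma = 1$ yields
$$
f(\exp(Z_t)(p)) = \sum_{k=0}^{N}\frac{1}{k!}(Z_t^{k}f)(p) + \frac{1}{N!}\int_0^1(1-\sigma)^N (Z_t^{N+1}f)(\exp(\sigma Z_t)(p))\,d\sigma.
$$
Because $Z_t$ vanishes at $t=0$ to order $t$, the iterated operator $Z_t^{N+1}$ expands as a polynomial in $t$ starting at order $t^{N+1}$ with bounded coefficients, so the remainder is $O(|t|^{N+1})$ uniformly near $p$, and the main sum coincides modulo $t^{N+1}$ with the truncation of $(e^{H_N(tY,tX)}f)(p)$.

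It remains to identify the two truncations. The Campbell--Hausdorff formula~\eqref{eq:chseries} states $e^{tY}e^{tX} = e^{H(tY,tX)}$ as formal power series in the non-commuting indeterminates $tX$ and $tY$, hence as formal power series in $t$ with coefficients in the associative algebra of differential operators generated by $X$ and $Y$. Since $H(tY,tX) - H_N(tY,tX)$ consists of iterated commutators of length $> N$ in $tX$ and $tY$, it is $O(t^{N+1})$ as a formal series; expanding $(H_N + (H - H_N))^k$ and discarding every term that contains the second summand shows that $e^{H(tY,tX)}$ and $e^{H_N(tY,tX)}$ agree modulo $t^{N+1}$. Applied to $f$ and evaluated at $p$, this equates the two expansions computed in the previous paragraph and completes the proof. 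The only real obstacle is bookkeeping: one must check that the Campbell--Hausdorff identity, which is purely algebraic in the free associative algebra on two generators, can be substituted with the concrete differential operators $X$ and $Y$ while keeping control of the remainders uniformly in a neighbourhood of $p$; once one works in the algebra of differential operators at $p$ filtered by powers of $t$ and truncates every series at order $t^N$, no convergence issue arises and each step reduces to a finite computation.
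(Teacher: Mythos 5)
Your proposal is correct and follows essentially the same route as the paper: expand $f(\exp(tX)\circ\exp(tY)(p))$ by a double Taylor expansion along the two flows to recognize the truncation of the formal series $e^{tY}e^{tX}f(p)$, expand $f(\exp(H_N(tY,tX))(p))$ to recognize the truncation of $e^{H_N(tY,tX)}f(p)$ (the paper isolates this step as Lemma~\ref{le:dlexpY(t)}, whereas you use Taylor's formula with integral remainder in the flow parameter at $\sigma=1$ — a cosmetic difference), and then match the two truncations via the formal Campbell--Hausdorff identity. The argument is sound and complete.
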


\begin{proof}
Set $\psi(t)=\exp(tX) \circ \exp (tY) (p)$, which is a function defined and $C^\infty$ in a neighbourhood of $0 \in \R$, and let $(x_1,\dots,x_n)$ be a system of local coordinates on a neighbourhood of $p$ in $U$. We will compute the Taylor expansion of every component $x_i(\psi(t))$, for $i=1, \dots,n$. To do this, we introduce  the function $\phi(t,s)= \exp(tX) \circ \exp (sY) (p)$, so that $\psi(t) = \phi(t,t)$, and we compute the partial derivatives of $x_i \circ \phi$ at $0 \in \R^2$. We have:
$$
\frac{\partial x_i \circ \phi}{\partial t} (t,s) = \frac{d}{dt}\left[ x_i \circ \exp(tX) \right] \left(\exp(sY)(p)\right) = Xx_i(\phi(t,s)),
$$
where $Xx_i$ denotes the Lie derivative of $x_i$ along $X$. Repeating this computation, we obtain  for any integer $k$,
$$
\frac{\partial^k x_i \circ \phi}{\partial t^k} (t,s) =  X^kx_i(\phi(t,s)).
$$
In the same way, we have:
\begin{multline*}
\frac{\partial^{k+l} x_i \circ \phi}{\partial s^l \partial t^k} (0,0) = \frac{\partial^{l}}{\partial s^l} \frac{\partial^{k} x_i \circ \phi}{ \partial t^k} (0,s)\big|_{s=0} \\
= \frac{\partial^{l}}{\partial s^l} \left[ X^kx_i(\exp (sY) (p))\right]\big|_{s=0} = Y^l X^kx_i(p).
\end{multline*}
We then deduce that the formal Taylor series of $x_i(\psi(t)) = x_i(\phi(t,t))$ at $0$ is
$$
\sum_{k,l\geq 0} \frac{t^{k+l}}{k! l!} Y^l X^kx_i(p) = \left[ \sum_{l\geq 0} \frac{t^l}{l!} Y^l \right] \left[ \sum_{k\geq 0} \frac{t^k}{k!} X^k \right] x_i(p),
$$
where $X$ and $Y$ are considered as derivation operators.
From the Campbell-Hausdorff formula, the product of the formal series $e^{tY} = \sum_{l\geq 0} \frac{t^l}{l!} Y^l$ with $e^{tX} = \sum_{k\geq 0} \frac{t^k}{k!} X^k $ is equal to the series $e^{H(tY,tX)}$. As a consequence, the Taylor expansion of $x_i(\psi(t))$ up to degree $N$ is given by the terms of degree $\leq N$ in the series $e^{H(tY,tX)}x_i (p)$, which coincide with the terms of degree $\leq N$ in the series $e^{H_N(tY,tX)}x_i (p)$.

On the other hand, it results from Lemma~\ref{le:dlexpY(t)} below that $e^{H_N(tY,tX)}x_i (p)$ is the Taylor series at $0$ of the function $t \mapsto x_i \circ \exp(H_N(tY,tX))(p)$. Thus
$$
x_i \circ \psi(t) - x_i \circ \exp(H_N(tY,tX))(p)= O(|t|^{N+1})
$$
 for every coordinate $x_i$, and the lemma follows.
\end{proof}

\begin{lemma}
\label{le:dlexpY(t)}
Let $Y_1,\dots, Y_{\ell}$ be vector fields on $U$,  $f:U \to \R$ a smooth function, and $p \in U$. The formal Taylor series at $0 \in \R^\ell$ of the function $(z_1,\dots,z_\ell) \mapsto f(\exp(\sum_i z_i Y_i)(p))$ is given by
$$
\sum_{k\geq 0} \frac{1}{k!} (\sum_i z_i Y_i)^k f (p) = e^{\sum_i z_i Y_i}f(p).
$$
The formal Taylor series at $0 \in \R$ of the function $t \mapsto f(\exp(Y(t))(p))$, where $Y(t)=\sum_{i=1}^\ell t^i Y_i$,  is given by
\begin{equation}
\label{eq:dlexpY(t)}
\sum_{k\geq 0} \frac{1}{k!} Y(t)^k f (p)= e^{Y(t)}f(p).
\end{equation}
\end{lemma}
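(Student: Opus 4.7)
The plan is to derive both Taylor series from a single one-parameter computation, then promote the result to $\ell$ variables by a scaling/multinomial argument, and finally specialize to $z_i = t^i$ by substitution of formal power series.

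First I would fix $z=(z_1,\dots,z_\ell) \in \R^\ell$, set $Z_z := \sum_{i=1}^\ell z_i Y_i$, and consider the flow curve $\gamma(s) := \exp(sZ_z)(p)$. Because $\dot\gamma(s) = Z_z(\gamma(s))$, iteration of the chain rule gives $\frac{d^k}{ds^k} f(\gamma(s)) = (Z_z^k f)(\gamma(s))$, and in particular
$$
\frac{d^k}{ds^k}\Big|_{s=0} f(\exp(sZ_z)(p)) = (Z_z^k f)(p).
$$
Next I exploit the scaling $\exp(sZ_z) = \exp(Z_{sz})$ to pass from the parameter $s$ to the $\ell$ variables $z$. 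Setting $\phi(z) := f(\exp(Z_z)(p))$, this reads $f(\exp(sZ_z)(p)) = \phi(sz)$, while the usual chain rule gives $\frac{d^k}{ds^k}\big|_{s=0} \phi(sz) = \bigl(\sum_i z_i \partial_{z_i}\bigr)^k \phi(0)$. Combining the two yields the polynomial identity
$$
\Bigl(\sum_{i=1}^\ell z_i \partial_{z_i}\Bigr)^k \phi(0) \;=\; \Bigl(\sum_{i=1}^\ell z_i Y_i\Bigr)^k f(p),
$$
valid for every $k\geq 0$ and every $z$. Since the left-hand side is $k!$ times the $k$-homogeneous part of the Taylor polynomial of $\phi$ at $0$, by the multinomial identity $\bigl(\sum_i z_i \partial_{z_i}\bigr)^k \phi(0) = \sum_{|\alpha|=k}\tfrac{k!}{\alpha!} z^\alpha \partial^\alpha \phi(0)$, summing over $k$ gives the first formula.

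For the second formula I introduce the polynomial map $\sigma: \R \to \R^\ell$, $\sigma(t) = (t,t^2,\dots,t^\ell)$, which satisfies $\sigma(0)=0$ and $f(\exp(Y(t))(p)) = \phi(\sigma(t))$. The formal Taylor series of the composition $\phi \circ \sigma$ at $0$ can be obtained by substituting the Taylor expansion of $\sigma$ into that of $\phi$: because $\sigma$ vanishes at $0$, its components raise the order in $t$, so only finitely many terms of the expansion of $\phi$ contribute to each given order in $t$, and Taylor's theorem applied to $\phi \circ \sigma$ validates the substitution. Replacing $z_i$ by $t^i$ in the first expansion therefore produces $\sum_{k\geq 0} \tfrac{1}{k!} \bigl(\sum_i t^i Y_i\bigr)^k f(p) = e^{Y(t)} f(p)$, which is the second assertion. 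There is no real obstacle here beyond bookkeeping; the two points requiring a measure of care are the identification of the $k$-homogeneous Taylor component of $\phi$ from the directional-derivative identity, and the parallel substitution argument in the final step.
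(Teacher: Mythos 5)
Your proposal is correct and follows essentially the same route as the paper: both compute $\frac{d^k}{ds^k}\big|_{s=0} f(\exp(sZ_z)(p))$ in two ways — once via the flow property to get $(Z_z^k f)(p)$, and once via the chain rule/multinomial expansion to identify the $k$-homogeneous Taylor component of $\phi$ — and then obtain the second formula by substituting $z_i = t^i$. Your substitution argument for the one-variable case is just a slightly more explicit version of what the paper treats as an immediate consequence.
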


\begin{proof}
The second statement is obviously a  consequence of the first one, so it
is sufficient to prove the latter. We introduce the functions $g(z)=
f(\exp(\sum_i z_i Y_i)(p))$ and $G(t,z)=g(tz)$ which are well-defined and smooth on a
neighbourhood of $0$ in $\R^\ell$,  respectively $\R \times
\R^\ell$. We are looking for the Taylor series of $g$ at $0$.

Since $G(t,z)= f(\exp(t \sum_i z_i Y_i)(p))$, we have, for any integer $k \geq 0$,
$$
\frac{\partial^k G}{\partial t^k}(0,z) = (\sum_i z_i Y_i)^k f (p).
$$
On the other hand $G(t,z)=g(tz)$, and hence the previous derivative can also be computed as
$$
\frac{\partial^k G}{\partial t^k}(0,z) = \sum_{\alpha_1 + \cdots + \alpha_\ell=k} \frac{k!}{\alpha_1! \cdots \alpha_\ell !} z_1^{\alpha_1} \cdots z_\ell^{\alpha_\ell} \frac{\partial^k g}{\partial z_1^{\alpha_1} \cdots \partial z_\ell^{\alpha_\ell}}(0) .
$$
Combining both expressions, we obtain
$$
\sum_{\alpha_1 + \cdots + \alpha_\ell=k} \frac{z_1^{\alpha_1} \cdots z_\ell^{\alpha_\ell}}{\alpha_1! \cdots \alpha_\ell !}  \frac{\partial^k g}{\partial z_1^{\alpha_1} \cdots \partial z_\ell^{\alpha_\ell}}(0) =\frac{1}{k!} \left(\sum_i z_i Y_i\right)^k f (p),
$$
and the lemma follows.
\end{proof}

Lemma~\ref{le:taylor_CH} can be extended in two ways. First, since the vector fields and their flows are smooth on $U$, the estimate holds uniformly with respect to $p$. Second, by Lemma~\ref{le:dlexpY(t)}, the vector fields $tX$ and $tY$ may be replaced by the one-parameter families of vector fields $X(t)= tX_1 + \cdots + t^{k} X_k$ and $Y(t)=t Y_1 + \cdots + t^\ell Y_\ell$, where $X_1,\dots,X_{k}$ and $Y_1,\dots, Y_{\ell}$ are vector fields on $U$. As an example,  $H_N(tY,tX)$ is of this form. To summarize, a slight change in the proof of Lemma~\ref{le:taylor_CH} actually shows the following result.

\begin{corollary}
\label{le:corochform}
Let $K \subset U$ be a compact. There exist two positive constants $\delta,C$ such that, if $p \in K$ and $|t| < \delta$, then:
$$
\left\| \exp(X(t))\circ \exp(Y(t))(p) - \exp(H_N(Y(t),Y(t)))(p)\right\| \leq C |t|^{N+1}.
$$
\end{corollary}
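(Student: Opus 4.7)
The proof will adapt the argument of Lemma~\ref{le:taylor_CH} by reducing everything to the comparison of two smooth functions of $t$ with matching Taylor polynomials at the origin. First, I would establish that for any smooth $f$ defined near $p$, the formal Taylor series at $t=0$ of
\[
\psi_f(t) := f\bigl(\exp(X(t)) \circ \exp(Y(t))(p)\bigr)
\]
equals $\bigl(e^{Y(t)} e^{X(t)} f\bigr)(p)$, where $X(t)$ and $Y(t)$ are regarded as $t$-dependent derivations. To get this, I would introduce the auxiliary multi-parameter function
\[
\Phi(\alpha_1,\dots,\alpha_k,\beta_1,\dots,\beta_\ell) = f\Bigl(\exp\bigl({\textstyle\sum_i \alpha_i X_i}\bigr) \circ \exp\bigl({\textstyle\sum_j \beta_j Y_j}\bigr)(p)\Bigr)
\]
defined near $0 \in \R^{k+\ell}$. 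Applying the first statement of Lemma~\ref{le:dlexpY(t)} first to the $\beta$-variables (with $q$ held fixed as the image of $p$ under the $\alpha$-flow) and then to the $\alpha$-variables, its Taylor series at the origin is $\bigl(e^{\sum_j \beta_j Y_j} e^{\sum_i \alpha_i X_i} f\bigr)(p)$; setting $\alpha_i = t^i$, $\beta_j = t^j$ and using $\psi_f(t) = \Phi((t,\dots,t^k),(t,\dots,t^\ell))$ yields the claim.

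Next, the formal Campbell--Hausdorff identity \eqref{eq:chseries} applied with indeterminates $X(t), Y(t)$ gives $e^{Y(t)} e^{X(t)} = e^{H(Y(t), X(t))}$ in the algebra of formal power series in $t$ with coefficients in differential operators. Since $X(t)$ and $Y(t)$ have no constant term, every iterated commutator of length $m$ in $H(Y(t), X(t))$ is of $t$-order at least $m$; consequently
\[
e^{H(Y(t), X(t))} \;\equiv\; e^{H_N(Y(t), X(t))} \pmod{t^{N+1}}
\]
as formal power series in $t$. On the other hand, $H_N(Y(t), X(t))$ is itself polynomial in $t$ with no constant term, so the second statement of Lemma~\ref{le:dlexpY(t)} applies to it and gives that the Taylor series at $0$ of $t \mapsto f(\exp(H_N(Y(t),X(t)))(p))$ is $e^{H_N(Y(t),X(t))} f(p)$. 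Combining the three displays, the smooth functions $t \mapsto f(\exp(X(t)) \circ \exp(Y(t))(p))$ and $t \mapsto f(\exp(H_N(Y(t),X(t)))(p))$ share the same Taylor polynomial of order $N$ at $t=0$, and Taylor's formula with integral remainder then yields a bound of the form $O(|t|^{N+1})$.

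To conclude, I would apply this reasoning with $f$ ranging over a finite system of local coordinates on a neighbourhood of $K$, obtaining the asserted norm estimate pointwise in $p$. For the uniformity: the Taylor remainders are controlled by suprema, over a fixed small neighbourhood of $(p,0)$, of finitely many derivatives of the coordinate functions and of the time-$1$ flows of $X(\cdot)$, $Y(\cdot)$, $H_N(Y(\cdot),X(\cdot))$; these derivatives depend continuously on $p$, so by compactness of $K$ one can choose the constants $\delta$ and $C$ uniformly for $p \in K$. The main technical obstacle I expect is the bookkeeping step that justifies the truncation $H \leadsto H_N$ modulo $t^{N+1}$ in the formal-series sense, which crucially relies on the absence of constant terms in $X(t), Y(t)$; the rest of the argument is a direct parallel to the proof of Lemma~\ref{le:taylor_CH}.
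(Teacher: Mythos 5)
Your proposal is correct and follows essentially the same route the paper intends: the paper proves this corollary only by remarking that ``a slight change in the proof of Lemma~\ref{le:taylor_CH}'' suffices, using Lemma~\ref{le:dlexpY(t)} to replace $tX$, $tY$ by the polynomial families $X(t)$, $Y(t)$ and invoking smoothness and compactness for uniformity in $p$, which is exactly what you carry out in detail (including the key truncation $H\leadsto H_N$ modulo $t^{N+1}$, valid because $X(t)$ and $Y(t)$ have no constant term). Note also that you correctly read $H_N(Y(t),X(t))$ where the statement has the typo $H_N(Y(t),Y(t))$.
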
\medskip

We are now in a position to prove formula~\eqref{eq:phiI}, that we  used in the proof of Lemma~\ref{le:Apopen}.
Let $X_1,\dots,X_m$ be $m$ elements of $VF(U)$. For every multi-index $I \in \{1,\dots,m\}^k$, $k\in \N$, we define the local diffeomorphisms $\phi^I_t$ on $U$ by  induction on the length $|I|$ of $I$. Let $\phi^i_t = \exp(tX_i)$ and set, if $I=iJ$,
$$
\phi_t^{iJ}
=\phi_{-t}^J \circ \phi_{-t}^i
\circ \phi_t^J \circ\phi_t^i.
$$

\begin{proposition}
\label{le:quasich}
Let $K \subset U$ be a compact and $I$ a multi-index. There exist two positive constants $\delta,C$ such that, if $p \in K$ and $|t| < \delta$, then
\begin{eqnarray}
\label{eq:phIt}
  \left\| \phi^I_t (p) - p - t^{|I|} X_{I} (p) \right\| & \leq & C |t|^{|I|+1}.
\end{eqnarray}
\end{proposition}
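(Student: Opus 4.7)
The proof is by induction on $|I|$. The base case $|I|=1$ is immediate: $\phi^{(i)}_t = \exp(tX_i)$, and the Taylor expansion provided by Lemma~\ref{le:dlexpY(t)} gives $\exp(tX_i)(p) = p + tX_i(p) + O(t^2)$ uniformly on compacts. To make the induction mesh with Campbell--Hausdorff, I strengthen the claim: for each multi-index $I$ and each integer $N \geq |I|$ there is a polynomial time-dependent vector field
\[
W^I(t) = t^{|I|}X_I + \sum_{k=|I|+1}^{N} t^k V^I_k, \qquad V^I_k \in \mathit{Lie}(X_1,\dots,X_m),
\]
and constants $\delta, C>0$ such that $\|\phi^I_t(p) - \exp(W^I(t))(p)\| \leq C|t|^{N+1}$ uniformly for $p\in K$ and $|t|<\delta$. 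Taking $N = |I|$ and Taylor-expanding $\exp(W^I(t))(p) = p + t^{|I|}X_I(p) + O(t^{|I|+1})$ then recovers the estimate~\eqref{eq:phIt}. For $|I|=1$ we set $W^I(t) := tX_i$.

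For the inductive step, write $I = iJ$ and apply the strengthened hypothesis to $J$: $\phi^J_{\pm t}(p) = \exp(W^J(\pm t))(p) + O(|t|^{N+1})$ with $W^J(t) = t^{|J|}X_J + O(t^{|J|+1})$. Since $\phi^i_{\pm t} = \exp(\pm tX_i)$, each of the four factors in $\phi^I_t = \phi^J_{-t} \circ \phi^i_{-t} \circ \phi^J_t \circ \phi^i_t$ is, modulo $O(|t|^{N+1})$, the exponential of a polynomial vector field of $t$-vanishing order at least $1$. Applying Corollary~\ref{le:corochform} three times to collapse these exponentials into a single one, I define
\[
W^I(t) := H_N\bigl(W^J(-t),\,H_N(-tX_i,\,H_N(W^J(t),\,tX_i))\bigr),
\]
which is again polynomial in $t$ with coefficients in $\mathit{Lie}(X_1,\dots,X_m)$, and which satisfies $\phi^I_t(p) = \exp(W^I(t))(p) + O(|t|^{N+1})$ uniformly on $K$.

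The principal obstacle is then the purely algebraic verification that the iterated Baker--Campbell--Hausdorff polynomial $W^I(t)$ has vanishing coefficient in every $t^k$ for $k < |I|$ and has $t^{|I|}$-coefficient exactly $[X_i, X_J] = X_I$. The degree-one terms $tX_i$ and $-tX_i$ cancel in the additive part of the BCH expansion; the bracket $\tfrac{1}{2}[tX_i, W^J(t)]$ generated by the innermost CH, together with the analogous bracket produced against $-tX_i$ in the second CH, contributes the leading $t^{|I|}[X_i, X_J]$; and the remaining pieces --- in particular the sum $W^J(t) + W^J(-t)$ and the higher nested brackets --- must be shown to cancel among themselves or to contribute only to the $V^I_k$ with $k > |I|$. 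Carrying this out order by order using multilinearity, the Jacobi identity, and the parity relation $W^J(-t) = (-1)^{|J|} t^{|J|} X_J + \ldots$ is the delicate bookkeeping that completes the inductive step.
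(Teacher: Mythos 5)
Your strategy is the same as the paper's---collapse the four flows into one exponential via Corollary~\ref{le:corochform} and read off the lowest weighted term of the resulting iterated Campbell--Hausdorff expression---but the step you defer as ``delicate bookkeeping'' is exactly where the argument breaks, and the cancellation you need does not occur. Your inductive hypothesis, faithfully applied, gives $\phi^J_{\pm t}(p)=\exp(W^J(\pm t))(p)+O(|t|^{N+1})$ with $W^J(t)=t^{|J|}X_J+\cdots$, so the additive part of your $W^I(t)$ contains
$$
W^J(t)+W^J(-t)=\bigl(1+(-1)^{|J|}\bigr)\,t^{|J|}X_J+\cdots,
$$
which equals $2t^{|J|}X_J+O(t^{|J|+2})$ whenever $|J|$ is even. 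Every commutator term contributed by~\eqref{eq:chseries} in your nested expression has degree at least $|J|+1$, so nothing can cancel this, and $W^I(t)$ retains a nonzero coefficient in degree $|I|-1$. This is not merely a failure of the method: with the recursive definition $\phi^{iJ}_t=\phi^J_{-t}\circ\phi^i_{-t}\circ\phi^J_t\circ\phi^i_t$, take $X_1=\partial_{x_1}$, $X_2=\partial_{x_2}$, $X_3=\partial_{x_3}+x_2\partial_{x_4}$ on $\R^4$ and $I=123$. A direct computation gives $\phi^{23}_s(x)=x+(0,0,0,s^2)$ exactly, hence $\phi^{123}_t(x)=x+(0,0,0,2t^2)$, while $X_{123}=[\partial_{x_1},\partial_{x_4}]=0$; so the estimate~\eqref{eq:phIt} fails already for $|I|=3$.

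The paper's proof closes the induction by setting $H^{iJ}_N(t)=H_N\bigl(H_N(tX_i,H^J_N(t)),H_N(-tX_i,-H^J_N(t))\bigr)$, i.e.\ it inserts $-H^J_N(t)$ where your application of the inductive hypothesis produces $W^J(-t)$; the two differ precisely by the parity terms above. Replacing $W^J(-t)$ by $-W^J(t)$ amounts to replacing $\phi^J_{-t}$ by the genuine inverse $(\phi^J_t)^{-1}$, and if the iterated commutator of flows is defined with genuine inverses your sum becomes $W^J(t)-W^J(t)=0$ and the induction closes. (A secondary point: with the paper's convention $\exp(X(t))\circ\exp(Y(t))(p)\approx\exp(H_N(Y(t),X(t)))(p)$, your innermost factor should be $H_N(tX_i,W^J(t))$, not $H_N(W^J(t),tX_i)$; otherwise the leading term comes out as $-t^{|I|}X_I$.) So you must either modify the definition of $\phi^I_t$ to use inverses, or accept that the statement with the definition as given cannot be reached by this route---the obstruction you flagged is real.
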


\begin{proof}
For $\delta >0$ small enough, the mapping $(p,t) \mapsto \phi^I_t(p)$ is defined and $C^\infty$ on $K \times (-\delta,\delta)$. As a consequence, we are reduced to prove~\eqref{eq:phIt} for a fixed $p\in K$.

When $|I|=1$, $\phi^I_t (p)=\exp(tX_i)(p)$ for some $i\in \{1,\dots,m\}$, which is equal to $p + t X_i(p) +O(|t|^2)$.

Now, let $I$ be a multi-index and $N > |I|$ an integer. Corollary~\ref{le:corochform} implies that $\phi_t^{I}(p)= \exp(H_N^{I}(t))(p) + O(t^{N+1})$ where the series $H_N^I(t)$ is defined by induction: if $I=iJ$, then
$$
H_N^{iJ}(t)=H_N( H_N(tX_i,H_N^J(t)),H_N(-tX_i,-H_N^J(t))).
$$
Applying~\eqref{eq:chseries} iteratively, we can write
$H_N^I(t) = t^{|I|} X_I + t^{|I|+1} R_I(t)$, the latter term being a one-parameter vector field. As a consequence,
$$
\phi_t^{I}(p)= p + t^{|I|} X_I (p)+ \hbox{terms of degree greater than }  {|I|+1},
$$
which completes the proof.
\end{proof}


\subsection{Push-forward formula}
\label{se:pushforward}

Given two vector fields $X, Y \in VF(U)$, we write $(\ad X)Y$ for $[X,Y]$, $(\ad X)^2Y$ for $(\ad X)((\ad X)Y)$, etc.

\begin{proposition}
\label{th:quasich}
Let $K \subset U$ be a compact, $N$ a positive integer, and $X$, $Y$, $Y_1,\dots,Y_\ell$  vector fields in $VF(U)$. There exist two positive constants $\delta,C$ such that, if $p \in K$, $t \in \R$ and $z\in \R^\ell$ satisfy $|t| < \delta$ and $\|z\| < \delta$, then
\begin{eqnarray}
 & \left\| \exp(tY)_*X (p) - \sum_{k = 0}^N \Frac{t^{k}}{k!} (\ad Y)^{k}X (p)  \right\|  \leq  C |t|^{N+1}, & \nonumber \\[2mm]
  &  \left\| \exp(\sum_{i=1}^\ell z_i Y_i)_*X (p) - \sum_{k = 0}^N \Frac{1}{k!} \left(\ad \sum_{i=1}^\ell z_i Y_i\right)^{k}X (p)  \right\| \leq  C \|z\|^{N+1}, &\nonumber
\end{eqnarray}
where $\exp(tY)_*X= d (\exp(tY)) \circ  X \circ \exp(-tY)$ denotes the
push-forward of the vector field $X$ by the diffeomorphism $\exp(tY)$.
\end{proposition}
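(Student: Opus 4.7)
The plan is to reduce both estimates to Taylor expansions with remainder for smooth families of vector fields. The key identification is that the $t$-derivatives at $0$ of the smooth one-parameter family $Z(t) := \exp(tY)_*X$ are exactly the iterated brackets $(\ad Y)^k X$. This follows from the flow identity $\exp((t+s)Y) = \exp(tY)\circ \exp(sY)$, which yields the semigroup relation $Z(t+s) = \exp(tY)_*\,Z(s)$; differentiating at $s=0$ produces the linear ODE
$$
\frac{d}{dt} Z(t) = (\ad Y)\,Z(t), \qquad Z(0) = X,
$$
in the space of smooth vector fields, and iteration gives $\frac{d^k Z}{dt^k}(0) = (\ad Y)^k X$ for every $k \geq 0$.

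Since $(t,p) \mapsto Z(t)(p)$ is jointly $C^\infty$ on a neighbourhood of $\{0\}\times K$, Taylor's theorem with integral remainder, applied componentwise in local coordinates on $U$, then furnishes
$$
\left\|Z(t)(p) - \sum_{k=0}^N \frac{t^k}{k!}(\ad Y)^k X(p)\right\| \leq C |t|^{N+1}
$$
uniformly for $(t,p)\in[-\delta,\delta]\times K$, which is the first assertion.

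For the second estimate, put $Y(z) = \sum_{i=1}^\ell z_i Y_i$ and parameterize $z = tw$ with $t=\|z\|$ and $w\in S^{\ell-1}$. The single-parameter formula applied with $Y$ replaced by the vector field $Y(w)$ and with parameter $t=\|z\|$ gives
$$
\exp(Y(z))_*X = \sum_{k=0}^N \frac{\|z\|^k}{k!}(\ad Y(w))^k X + R(z),
$$
where $\|R(z)\|\leq C\|z\|^{N+1}$; the constant $C$ may be taken uniform for $w\in S^{\ell-1}$ because the bounds in the first estimate depend continuously on the generating vector field (through the $C^{N+1}$ norms of its coefficients), and $\{Y(w):w\in S^{\ell-1}\}$ is a compact set of smooth fields. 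Multilinearity of the iterated bracket gives $\|z\|^k(\ad Y(w))^k X = (\ad Y(z))^k X$, which is exactly the claimed expansion.

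The chief obstacle is the rigorous derivation of the ODE $\dot Z = (\ad Y)Z$. A clean route is the operator calculus encapsulated in Lemma~\ref{le:dlexpY(t)}: on smooth functions, $f\mapsto f\circ\exp(tY)$ coincides with $e^{tY}$, so that $\exp(tY)_*X$ corresponds, as a derivation, to the conjugated operator $e^{tY}Xe^{-tY}$, from which the identity $e^{tY}Xe^{-tY} = e^{t\,\ad Y}X$ is purely algebraic and supplies both the ODE and, formally, the full expansion. Alternatively one can differentiate the coordinate expression $(\exp(tY)_*X)(p) = d\exp(tY)_{\exp(-tY)(p)}\,X(\exp(-tY)(p))$ directly, invoking smooth dependence of the flow on its generator. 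Once the ODE is secured, the remainder of the proof is a routine Taylor estimate uniform on a compact set.
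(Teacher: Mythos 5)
Your proof is correct and follows essentially the same route as the paper: both identify the $k$-th $t$-derivative of $\exp(tY)_*X$ at $0$ with $(\ad Y)^kX$ via the semigroup property of the flow, and then conclude by Taylor's theorem with a remainder uniform on the compact $K$. The only (minor) divergence is in the second estimate, where the paper computes the multivariable partial derivatives of $z\mapsto\exp(\sum_i z_iY_i)_*X(p)$ directly as in Lemma~\ref{le:dlexpY(t)}, whereas you restrict to rays $z=tw$ and invoke compactness of the sphere plus multilinearity of $\ad$; both are valid.
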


\begin{proof}
Let us begin with the first inequality. Set $\phi_p(t) = \exp(tY)_*X (p)$. For $\delta >0$ small enough, the mapping $(p,t) \mapsto \phi_p(t)$ is defined and $C^\infty$ on $K \times (-\delta,\delta)$. As a consequence, there exists a constant $C>0$ such that, for every $p\in K$ and $|t| < \delta$, we have
\begin{eqnarray}
  \left\| \phi_p(t) - \sum_{k = 0}^N \Frac{t^{k}}{k!} \frac{d^k \phi_p}{d t^k}(0)  \right\| & \leq & C |t|^{N+1}. \nonumber
\end{eqnarray}
It remains to prove that $\frac{d^k \phi_p}{d t^k}(0) = (\ad Y)^{k}X (p)$ for any integer $k$. Note first that $\phi_p(0)=X(p)$, and that
$$
\frac{d \phi_p}{d t}(0) = \frac{d }{d t} \left[ \exp(tY)_*X  \right]\big|_{t=0}(p)
$$
is by definition equal to $-L_YX(p)$, where $L_YX$ is the Lie derivative of $X$ along $Y$ (see for instance~\cite{boo86}). Since $L_YX(p)=(\ad Y) X(p)$, the cases $k=0$ and $k=1$ are done.

We need now to compute $\frac{d \phi_p}{d t}(t)$ at $t\neq 0$. Let us write $\phi_p(t+s)$ as $\exp(tY)_*\exp(sY)_*X (p)$. We  have
\begin{multline*}
\frac{d \phi_p}{d t}(t) = \frac{d \phi_p(t+s)}{d s}\mid_{s=0}= \exp(tY)_*\frac{d }{d s}\left[ \exp(sY)_*X \right]\big|_{s=0}(p) \\
=\exp(tY)_* ((\ad Y) X)(p).
\end{multline*}
This derivative has the same form as $\phi_p(t)$, $X$ being replaced by $(\ad Y)X$. Iterating the  argument above, we obtain by induction $\frac{d^k \phi_p}{d t^k}(0) = (\ad Y)^{k}X (p)$, and the first inequality of the proposition is proved.\medskip

As for the second inequality, the same reasoning applies and we only need to compute the partial derivatives at $0 \in \R^\ell$ of the function $\widetilde{\phi}(z)=\exp(\sum_{i} z_i Y_i)_*X (p)$. This can be done as in the proof of Lemma~\ref{le:dlexpY(t)}. The proposition follows.
\end{proof}


\section{Different systems of privileged coordinates}
\label{se:proofpriv}
This appendix is devoted to the proof that the examples of coordinates
introduced in Section~\ref{se:privcoor} are actually privileged
coordinates.

\subsection{Canonical coordinates of the second kind}
\label{se:2ndkind}
Let $p\in M$ and $Y_1,\dots,Y_n$ an adapted frame at $p$
(see~\eqref{eq:adaptedframe}, page~\pageref{pa:adaptedframe}).  The map
$$
\phi  : (z_1,\dots,z_n) \mapsto \exp(z_n Y_n)\circ \cdots \circ \exp(z_1
Y_1)(p)
$$
 is a local diffeomorphism near $0 \in \R^n$ and its inverse defines some
 coordinates called
 \emph{canonical coordinates of the second kind near $p$}.

The following result is due to Hermes~\cite{her91}.

\begin{lemma}
\label{ccan}
Canonical coordinates of the second
kind are privileged  at $p$.
\end{lemma}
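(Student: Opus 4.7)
The map $\phi$ is a local $C^\infty$-diffeomorphism near $0 \in \R^n$ because $d\phi_0 (\partial_{z_i}) = Y_i(p)$: indeed, restricting $\phi$ to the $z_i$-axis collapses the composition to $\exp(z_i Y_i)(p)$, whose velocity at $z_i=0$ is exactly $Y_i(p)$, and $Y_1(p),\dots,Y_n(p)$ form a basis of $T_pM$. Since $Y_i(p) \in \Delta^{w_i}(p)$, the coordinates $(z_1,\dots,z_n)$ are linearly adapted at $p$, so by the discussion preceding the definition of privileged coordinates one has $\mathrm{ord}_p(z_j) \leq w_j$ for every $j$.

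It remains to prove the reverse inequality $\mathrm{ord}_p(z_j) \geq w_j$. Using the algebraic characterization of the nonholonomic order given just after Proposition~\ref{le:order}, this amounts to showing that $(X_{i_1}\cdots X_{i_s}z_j)(p) = 0$ whenever $s<w_j$ and $i_1,\dots,i_s\in\{1,\dots,m\}$. My plan is to evaluate this derivative in two stages. First, write
$$(X_{i_1}\cdots X_{i_s}z_j)(p) = \left.\frac{\partial^s}{\partial t_1\cdots\partial t_s}\right|_{t=0} z_j\bigl(\exp(t_s X_{i_s})\circ\cdots\circ\exp(t_1 X_{i_1})(p)\bigr).$$
Iterating Corollary~\ref{le:corochform} replaces the composition of flows by a single exponential, namely $\exp(Z(t))(p)$ modulo $O(|t|^{s+1})$, where $Z(t) = \sum_{k=1}^{s} Z_k(t)$ and each $Z_k(t)$ is a $t$-polynomial of total degree $k$ whose coefficients are length-$k$ iterated brackets of the $X_{i_\alpha}$'s and hence lie in $\Delta^k$. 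Expanding via Lemma~\ref{le:dlexpY(t)} and extracting the coefficient of $t_1t_2\cdots t_s$ expresses $(X_{i_1}\cdots X_{i_s}z_j)(p)$ as a finite combination of terms of the form
$$(V_1 V_2 \cdots V_\ell\, z_j)(p), \qquad V_\alpha\in\Delta^{k_\alpha},\quad k_1+\cdots+k_\ell = s.$$

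The heart of the argument is then to check that such a composition of operators annihilates $z_j$ at $p$ as soon as $k_1+\cdots+k_\ell<w_j$. Expanding each $V_\alpha$ in the local frame $Y_1,\dots,Y_n$ near $p$ as $V_\alpha = \sum_b \lambda_{\alpha,b} Y_b$, with $\lambda_{\alpha,b}(p) = 0$ unless $w_b\leq k_\alpha$, and applying the Leibniz rule to distribute the derivations onto the $\lambda_{\alpha,b}$'s and onto $z_j$, this reduces to the identity
$$(Y_{b_1} Y_{b_2}\cdots Y_{b_\ell}\, z_j)(p) = 0 \quad\text{whenever}\quad w_{b_1}+\cdots+w_{b_\ell} < w_j,$$
together with analogous but lower-weight statements coming from the derivatives of the $\lambda_{\alpha,b}$'s. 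This is precisely where the specific form of $\phi$ enters: when $b_1<b_2<\cdots<b_\ell$ and $j\notin\{b_1,\dots,b_\ell\}$, the $\ell$-parameter surface $\exp(t_\ell Y_{b_\ell})\circ\cdots\circ\exp(t_1 Y_{b_1})(p)$ is by the very definition of $\phi$ contained in the slice $\{z_j = 0\}$, so $z_j$ vanishes identically on it and all its partial derivatives at the origin are zero. Arbitrary orderings of the $Y_{b_\alpha}$'s are then handled by Baker--Campbell--Hausdorff reordering, the commutator corrections being absorbed into lower-weight terms.

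The main obstacle is this last bookkeeping: one must run a simultaneous induction on $w_j$ and on $\ell$ to argue that the correction terms produced by Leibniz differentiation of the $\lambda_{\alpha,b}$ and by BCH reordering of the $Y_{b_\alpha}$ all carry strictly less total weight than $w_j$, so that they are already known to vanish by the inductive hypothesis. Once this is verified, every $(V_1\cdots V_\ell z_j)(p)$ with $\sum_\alpha k_\alpha < w_j$ vanishes, hence every nonholonomic derivative of $z_j$ of length strictly less than $w_j$ vanishes at $p$, yielding $\mathrm{ord}_p(z_j)\geq w_j$ and completing the proof.
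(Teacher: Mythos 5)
Your first half is fine (local diffeomorphism, linear adaptedness, hence $\mathrm{ord}_p(z_j)\leq w_j$), and your key geometric observation is correct and is genuinely the reason these coordinates work: for $b_1\leq\cdots\leq b_\ell$ the surface $\exp(t_\ell Y_{b_\ell})\circ\cdots\circ\exp(t_1Y_{b_1})(p)$ is a slice of $\phi$ on which $z_j$ is identically zero or linear in the $t$'s, so every \emph{ordered} derivative $(Y_{b_1}\cdots Y_{b_\ell}z_j)(p)$ with $\ell\geq 2$ vanishes.

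The gap is in the reduction from $(X_{i_1}\cdots X_{i_s}z_j)(p)$ to those ordered $Y$-derivatives. When you write $X_{i_\alpha}=\sum_b\lambda_{\alpha,b}Y_b$ and apply Leibniz, the terms in which some derivations fall on the coefficients are \emph{not} lower weight: the constraint $w_b\leq k_\alpha$ on a surviving field $Y_b$ holds only when $\lambda_{\alpha,b}$ is evaluated at $p$, not when it is differentiated. Concretely, since $(Y_bz_j)(p)=\delta_{bj}$, already for $s=2<w_j$ the Leibniz expansion produces the term $\sum_a\lambda_{i_1,a}(p)\,(Y_a\lambda_{i_2,j})(p)$, where $Y_j$ has weight $w_j$ and $(Y_jz_j)(p)=1$; its vanishing requires $(Y_a\lambda_{i_2,j})(p)=0$, a statement about the weighted order of the $j$-th frame component of $X_{i_2}$ that your inductive hypothesis does not cover and that is essentially equivalent to what you are proving. (The BCH reordering corrections are likewise not of lower weight --- $[Y_a,Y_b]\in\Delta^{w_a+w_b}$ has the \emph{same} weight, only shorter length --- though that part is repairable.) The paper closes exactly this hole by a different device: it applies the push-forward formula (Proposition~\ref{th:quasich}) to expand
$$
(\varphi^{-1})_*X(p)=\sum_{l}\frac{(-z_1)^{l_1}\cdots(-z_n)^{l_n}}{l_1!\cdots l_n!}\,(\ad Y_n)^{l_n}\cdots(\ad Y_1)^{l_1}X(p)+O(|z|^{N+1}),
$$
and reads off from the adaptedness of the frame that the components of $X$ in the coordinate frame have weighted order $\geq w_i-1$ as \emph{functions} of $z$ near $0$. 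It is this order information on the coefficients themselves --- not merely their values at $p$ --- that your bookkeeping needs and never establishes.
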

For sake of simplicity, we will write the compositions of maps as
 products; for instance, we write
$$
\phi(z)= \exp(z_1 Y_1)
 \cdots   \exp(z_n Y_n)(p).
$$

\begin{proof}
First, let us  recall that $\phi$ is a local diffeomorphism at $z=0$
because its differential at $0$ is an isomorphism. This
results from
$$
\frac{\partial \phi}{\partial
z_i}(0)=
\frac{d}{dt}\left(\phi(0,
\dots,t,\dots,0)\right)\big|_{t=0}=\frac{d}{dt}\left(
\exp(tY_i)(p)\right)\big|_{t=0}=Y_i(p),
$$
for $i=1,\dots,n$. This computation also reads as $\phi_*
\frac{\partial}{\partial z_i} (p) = Y_i(p)$, which implies  $Y_i
z_i(p)=1$ (as in Section~\ref{se:nhorders}, $Y_iz_i$ denotes the Lie
derivative of the
function $z_i$ along the vector field $Y_i$). Hence the order of $z_i$ at $p$
is not greater than $w_i$.

It remains to show that the order of $z_i$ at $p$
is at least $w_i$ for each $i=1,\dots, n$. This is a direct consequence
of the following assertion. \medskip

\noindent \textbf{Claim.} \emph{Let $X$ be one of the vector fields
  $X_1, \dots,
  X_m$. Then, for $i=1,\dots, n$, the Taylor expansion at $z=0$
  of the function $a_i(z) = Xz_i \left(\phi(z)\right)$ is a sum of homogeneous
  polynomials in the coordinates $z$
of weighted degree  $\geq w_i-1$.}\smallskip

From the very definition of $a_i(z)$, we have
\begin{eqnarray}
\label{eq:ai(z)}
 X(\phi(z)) & = & \sum_{i=1}^n a_i(z) \frac{\partial \phi}{\partial
   z_i} (z).
\end{eqnarray}
Given $z$, let $\varphi$ be the diffeomorphism defined on a neighbourhood
of  $p$ by $\varphi(q)  = \exp(z_1 Y_1)
 \cdots   \exp(z_n Y_n)(q)$. In particular, $\varphi(p)=\phi(z)$. In order to obtain an equality in $T_pM$, we  apply the isomorphism $(d \varphi_p)^{-1}$ to both sides of~\eqref{eq:ai(z)}, and we get
\begin{eqnarray*}
(\varphi^{-1})_* X (p) & = & \sum_{i=1}^n a_i(z) \, (\varphi^{-1})_*\frac{\partial }{\partial
   z_i} (p).
\end{eqnarray*}
This equality is of the form $W= \sum_{i=1}^n a_i V_i$, where the
vectors $W=W(z)$ and $V_i=V_i(z)$, $i=1, \dots,n$, belong to
$T_pM$. If we denote by $b=b(z)\in\R^n$ the coordinates of $W$ in the
basis $(Y_1(p), \dots, Y_n(p))$ of $T_pM$, and by $P=P(z)$ the
$(n\times n)$-matrix of the coordinates of $V_1,\dots,V_n$ in the
same basis, then the vector $a(z)=(a_1(z), \dots,a_n(z))$  appears as
the solution of   $Pa=b$.

Note first that  $P(0)$ equals the identity matrix
$\mathbb{I}$. Therefore both matrices  $P(z)$ and
$P(z)^{-1}$ are equal to  $\mathbb{I} + $homogeneous terms of positive
degree. Hence  the Taylor expansion of $a_i(z)$ and the one of
$b_i(z)$ have the same homogeneous terms of lower degree.

On the other hand, since $\varphi^{-1} = \exp(-z_n Y_n) \cdots \exp(-z_1 Y_1)$, we have
\begin{eqnarray*}
W(z) & = &  \exp(-z_n
Y_n)_* \cdots \exp(- z_1 Y_1)_* X  (p).
\end{eqnarray*}
Let us choose an integer $N$ bigger than all the weights $w_i$, and apply Proposition~\ref{th:quasich} to $\exp(- z_1 Y_1)_* X$, then to $\exp(- z_2 Y_2)_* (\ad Y_1)^{l_1}X$, and so on,
\begin{eqnarray*}
W(z)  & = &  \exp(-z_n
Y_n)_* \cdots \exp(- z_2 Y_2)_*\sum_{l_1= 0}^N \frac{(-z_1)^{l_1}}{l_1!} (\ad Y_1)^{l_1}X (p) \\
& & \qquad \qquad \qquad \qquad \qquad \qquad \qquad \qquad \qquad \qquad  + O(|z_1|^{N+1})\\
& \vdots &  \\
& = &  \sum_{l_1,\dots,l_{n} = 0}^N \frac{(-z_1)^{l_1}}{l_1!} \cdots
\frac{(-z_n)^{l_n}}{l_n!} \:
(\ad Y_n)^{l_n}\cdots (\ad Y_1)^{l_1}X (p) + O(|z|^{N+1}).
\end{eqnarray*}
Hence every coordinate $b_i(z)$ of $W(z)$ satisfies
\begin{eqnarray}
\label{eq:bi}
b_i(z)  & = &  \sum_{l_1,\dots,l_{n} = 0}^N
 \frac{(-z_1)^{l_1}}{l_1!} \cdots
\frac{(-z_n)^{l_n}}{l_n!} \beta_i^l + O(|z|^{N+1}),
\end{eqnarray}
 $\beta_i^l$  being the
$i$th coordinate in the basis $(Y_1(p), \dots, Y_n(p))$ of the vector $(\ad Y_n)^{l_n}\cdots (\ad Y_1)^{l_1}X (p)$. The
latter vector belongs to $\Delta^w(p)$,
where $w=1+l_1w_1 +\cdots+l_nw_n$ (recall that $X \in \Delta^1$ and
$Y_i \in \Delta^{w_i}$). Since $(Y_1,
\dots, Y_n)$  is an adapted frame at $p$, $\beta_i^l$ is zero when
$1+l_1w_1 +\cdots+l_nw_n < w_i$. It follows that  $b_i(z)$ -- and then
 $a_i(z)$ -- contains only homogeneous terms of weighted degree
 greater than or equal to $w_i -1$. This ends the proofs of both the claim
 and  the lemma.
\end{proof}


\subsection{Canonical coordinates of the first kind}
\label{se:1stkind}
Let $p\in M$ and $Y_1,\dots,Y_n$ an adapted frame at $p$.  The map
$$
\widetilde{\phi}  : (z_1,\dots,z_n) \mapsto \exp(z_1 Y_1 + \cdots + z_n
Y_n)(p)
$$
 is a local diffeomorphism near $0 \in \R^n$ since its differential at $0$ is an isomorphism. This
results from
$$
\frac{\partial \widetilde{\phi}}{\partial
z_i}(0)=
\frac{d}{dt}\left(\widetilde{\phi}(0,
\dots,t,\dots,0)\right)\big|_{t=0}=\frac{d}{dt}\left(
\exp(tY_i)(p)\right)\big|_{t=0}=Y_i(p),
$$
for $i=1,\dots,n$. The inverse of $\widetilde{\phi}$ defines some local
 coordinates near $p$ called
 \emph{canonical coordinates of the first kind}.

\begin{lemma}
\label{le:ccan1}
Canonical coordinates of the first
kind are privileged  at $p$.
\end{lemma}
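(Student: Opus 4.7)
The plan is to adapt the proof of Lemma~\ref{ccan} to the first-kind case. Writing $Z(z) = z_1 Y_1 + \cdots + z_n Y_n$ so that $\widetilde{\phi}(z) = \exp(Z(z))(p)$, the differential $\partial_{z_i}\widetilde\phi(0) = Y_i(p)$ makes $\widetilde\phi$ a local diffeomorphism near $0$, gives $Y_i z_i(p) = 1$, and hence $\mathrm{ord}_p(z_i) \leq w_i$. The heart of the argument is the claim, analogous to the one in Lemma~\ref{ccan}: for every $X \in \{X_1,\dots,X_m\}$ and every $i$, the function $a_i(z) = (Xz_i)(\widetilde\phi(z))$ has a Taylor expansion at $z=0$ consisting only of monomials of weighted degree $\geq w_i - 1$ (with $\deg z_j := w_j$).

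To prove the claim, I would follow the matrix strategy of Lemma~\ref{ccan}. Setting $\varphi = \exp(Z(z))$ and applying $(d\varphi_p)^{-1}$ to the identity $X(\widetilde\phi(z)) = \sum_i a_i(z)\partial_{z_i}\widetilde\phi(z)$ produces, in $T_pM$,
\[
W(z) = \sum_i a_i(z)\, V_i(z), \qquad W(z) = (\exp(-Z(z)))_* X(p), \quad V_i(z) = (d\varphi_p)^{-1}\partial_{z_i}\widetilde\phi(z);
\]
expanding in the basis $(Y_1(p),\dots,Y_n(p))$ this reads $b(z) = P(z)\, a(z)$. Proposition~\ref{th:quasich} (second estimate) yields
\[
W(z) = \sum_{k=0}^N \frac{(-1)^k}{k!}\sum_{j_1,\dots,j_k} z_{j_1}\cdots z_{j_k}(\ad Y_{j_1})\cdots(\ad Y_{j_k})X(p) + O(\|z\|^{N+1}),
\]
and since $(\ad Y_{j_1})\cdots(\ad Y_{j_k}) X(p) \in \Delta^{1+w_{j_1}+\cdots+w_{j_k}}(p)$, its $\ell$-th coordinate in the adapted basis vanishes when $w_{j_1}+\cdots+w_{j_k} < w_\ell - 1$, so $b_\ell(z)$ has weighted degree $\geq w_\ell - 1$.

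The new ingredient with respect to Lemma~\ref{ccan} is the analysis of $V_i(z)$. The variation-of-constants formula for the flow of $Z(z)$ gives
\[
V_i(z) = \int_0^1 (\exp(-sZ(z))_* Y_i)(p)\, ds,
\]
and applying Proposition~\ref{th:quasich} to the integrand (with parameters $-sz_j$) and integrating in $s$ produces
\[
V_i(z) = Y_i(p) + \sum_{k=1}^N \frac{(-1)^k}{(k+1)!}(\ad Z(z))^k Y_i(p) + O(\|z\|^{N+1}).
\]
Since $(\ad Y_{j_1})\cdots(\ad Y_{j_k}) Y_i(p) \in \Delta^{w_i + w_{j_1}+\cdots+w_{j_k}}(p)$, its $\ell$-th coordinate vanishes when $w_{j_1}+\cdots+w_{j_k} < w_\ell - w_i$, so $P_{\ell i}(z) - \delta_{\ell i}$ has weighted degree $\geq w_\ell - w_i$. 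A Neumann-series telescoping then shows that $(P^{-1})_{i\ell}(z) - \delta_{i\ell}$ has weighted degree $\geq w_i - w_\ell$, and combining with the bound on $b_\ell$ gives $a_i = \sum_\ell (P^{-1})_{i\ell}\, b_\ell$ of weighted degree $\geq w_i - 1$, proving the claim.

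With the claim established, I would conclude by induction on $s\geq 0$ that $A_{j_1,\dots,j_s}(z) := (X_{j_1}\cdots X_{j_s} z_i)\circ\widetilde\phi(z)$ has Taylor expansion consisting of monomials of weighted degree $\geq w_i - s$. The base case $s=0$ is immediate, and the inductive step follows from the identity $A_{j_1,\dots,j_s}(z) = \sum_\ell a_\ell^{(j_1)}(z)\cdot \partial_{z_\ell} A_{j_2,\dots,j_s}(z)$ (obtained by writing $X_{j_1}(\widetilde\phi(z))$ in the basis $(\partial_{z_\ell}\widetilde\phi(z))$), combining the claim for $a_\ell^{(j_1)}$ with the inductive bound for $A_{j_2,\dots,j_s}$. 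Evaluating at $z=0$ gives $X_{j_1}\cdots X_{j_s} z_i(p) = 0$ for every $s < w_i$, and Proposition~\ref{le:order} yields $\mathrm{ord}_p(z_i) \geq w_i$. The main obstacle will be establishing the weighted-triangular structure of $P(z)$; the variation-of-constants formula used above is not proven in the paper, but it follows by solving the linear ODE $\dot\Psi_t = Y_i\circ\exp(tZ(z)) + dZ(z)\cdot\Psi_t$ for $\Psi_t := \partial_{z_i}\exp(tZ(z))(p)$ with initial condition $\Psi_0 = 0$.
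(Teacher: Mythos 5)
Your proof is correct and follows the same backbone as the paper's: the paper also reduces Lemma~\ref{le:ccan1} to the argument of Lemma~\ref{ccan}, forms the identity $P(z)a(z)=b(z)$ in $T_pM$, and computes $\widetilde{W}(z)=(\widetilde\varphi^{-1})_*X(p)$ directly from the second estimate of Proposition~\ref{th:quasich} to get that $b_\ell$ contains only weighted degrees $\geq w_\ell-1$. Where you go beyond the paper is in the treatment of the matrix $P(z)$: the paper contents itself with ``$P(0)=\mathbb{I}$, hence $P$ and $P^{-1}$ are $\mathbb{I}$ plus terms of positive degree, so $a_i$ and $b_i$ have the same lower-degree homogeneous terms,'' which as stated does not suffice (a correction term $(P^{-1}-\mathbb{I})_{i\ell}\,b_\ell$ with $w_\ell<w_i-1$ could a priori contribute weighted degree below $w_i-1$). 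Your variation-of-constants formula for $\partial_{z_i}\exp(Z(z))(p)$, combined with Proposition~\ref{th:quasich} and the inclusion $[\Delta^a,\Delta^b]\subset\Delta^{a+b}$, establishes the weighted-triangular estimate $\mathrm{ord}\big(P_{\ell i}-\delta_{\ell i}\big)\geq w_\ell-w_i$, which is exactly what the Neumann-series step needs; this is the honest version of the paper's one-line assertion. Likewise, your induction on the length $s$ of the iterated derivatives $X_{j_1}\cdots X_{j_s}z_i$, concluding via Proposition~\ref{le:order}, spells out the implication ``Claim $\Rightarrow$ $\mathrm{ord}_p(z_i)\geq w_i$'' that the paper dismisses as ``a direct consequence.'' In short: same route, but your write-up closes two gaps the paper leaves open, at the cost of proving the derivative-of-the-exponential formula, which your ODE argument handles correctly.
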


 The first proof of this lemma appeared in \cite{rot76}, with a different formulation. The proof we present here is rather different.

\begin{proof}
The proof follows exactly the same lines as the one of Lemma~\ref{ccan}, replacing $\phi$ by $\widetilde{\phi}$, and $\varphi$ by $\widetilde{\varphi}  = \exp(\sum_j z_j Y_j)$. We are left to compute the coordinates $\widetilde{b_i}(z)$, $i=1, \dots,n$, of the vector $\widetilde{W}(z)=(\widetilde{\varphi}^{-1})_* X (p)$ in  the
basis $(Y_1(p), \dots, Y_n(p))$ of $T_pM$. It  results directly from Proposition~\ref{th:quasich} that
\begin{eqnarray*}
\widetilde{W}(z)  & = & \sum_{k = 0}^N \Frac{1}{k!} \left(\ad \sum_{i=1}^\ell z_j Y_j \right)^{k}X (p) + O(|z|^{N+1}),\\
& = &  \sum_{k=0}^N \sum_{l_1+\dots +l_{n} = k} a_l z_1^{l_1} \cdots
z_n^{l_n} Z_l(p) + O(|z|^{N+1}),
\end{eqnarray*}
where $Z_l$ belongs to $\Delta^w(p)$,
with $w=1+l_1w_1 +\cdots+l_nw_n$. Thus every coordinate $\widetilde{b_i}(z)$ of $\widetilde{W}(z)$ satisfies
\begin{eqnarray*}
\widetilde{b_i}(z)  & = & \sum_{k=0}^N \sum_{l_1+\dots +l_{n} = k} a_l z_1^{l_1} \cdots
z_n^{l_n} \widetilde{\beta}_i^l + O(|z|^{N+1}),
\end{eqnarray*}
 $\widetilde{\beta}_i^l$  being the
$i$th coordinate of $Z_l$ in the basis $(Y_1(p), \dots, Y_n(p))$.
This expression is similar to~\eqref{eq:bi}, and the same conclusion follows.
\end{proof}


\subsection{Algebraic coordinates}
\label{se:coordalg}

Let us recall the construction of the algebraic coordinates $(z_1, \dots, z_n)$ given in page~\pageref{pa:coordalg}. Let $Y_1, \dots, Y_n$ be an adapted frame  at $p$, and $(y_1,\dots,y_n)$ be local coordinates  centered at $p$ such
  that $\partial_{y_i}|_p = Y_i(p)$.  For $j=1, \dots,n$, we set
  \begin{equation}
    \label{eq:coordalg}
z_j = y_j - \sum_{k=2}^{w_j-1} h_k(y_1, \dots, y_{j-1}),
  \end{equation}
  where, for $k=2, \dots, w_j-1$,
  $$
h_k(y_1, \dots, y_{j-1})=
\sum_{\stackrel{\scriptstyle{|\alpha|=k}}{w(\alpha)<w_j}} \!\!\!\!
Y_1^{\alpha_1} \ldots Y_{j-1}^{\alpha_{j-1}} \Big(
y_j -\sum_{q=2}^{k-1} h_q(y)\Big)(p) \
\frac{y_1^{\alpha_1}}{\alpha_1!} \cdots
\frac{y_{j-1}^{\alpha_{j-1}}}{\alpha_{j-1}!},
  $$
  with $|\alpha|=\alpha_1 + \cdots + \alpha_n$.

\begin{lemma}
\label{le:coordalg}
The algebraic coordinates $(z_1, \dots, z_n)$ are privileged at $p$.
\end{lemma}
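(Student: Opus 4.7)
Since each $h_k$ is a polynomial in $y_1, \dots, y_{j-1}$ with no constant or linear part, the change from $y$ to $z$ preserves differentials at $p$, so $(z_1, \dots, z_n)$ is linearly adapted at $p$ and hence $\mathrm{ord}_p(z_j) \leq w_j$. Privilegedness therefore reduces to the reverse inequality $\mathrm{ord}_p(z_j) \geq w_j$.

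The plan is to exploit the canonical coordinates of the second kind $\tilde z$ associated with the same adapted frame $Y_1, \dots, Y_n$, which are already known to be privileged by Lemma~\ref{ccan}. Iterating the formal Taylor expansion of Lemma~\ref{le:dlexpY(t)} across the individual factors $\exp(\tilde z_l Y_l)$ produces the identity
\[
f(\exp(\tilde z_n Y_n)\circ\cdots\circ\exp(\tilde z_1 Y_1)(p)) \sim \sum_{k\in \mathbb{N}^n} \frac{\tilde z_1^{k_1}\cdots\tilde z_n^{k_n}}{k_1!\cdots k_n!}\,(Y_1^{k_1}\cdots Y_n^{k_n} f)(p).
\]
Combining this with Proposition~\ref{le:algorder} (valid in the privileged coordinates $\tilde z$) translates the order condition into the vanishing of ordered $Y$-derivatives:
\[
\mathrm{ord}_p(f)\geq w_j \quad\Longleftrightarrow\quad (Y_1^{k_1}\cdots Y_n^{k_n} f)(p)=0\ \text{ for every }k\ \text{ with }w(k)<w_j.
\]
Moreover, monotonicity of the weights forces $k_l=0$ for $l\geq j$ in any such multi-index, so it is enough to verify this for $f=z_j$ and $\beta$ supported on $\{1,\dots,j-1\}$.

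For the verification I would write $Y_l = \partial_{y_l}+R_l$ with $R_l$ a vector field vanishing at $p$. A degree count, in which each factor $R_l$ preserves or raises the degree of a monomial while each $\partial_{y_l}$ lowers it by one, yields two elementary identities: (a) $(Y_1^{\beta_1}\cdots Y_{j-1}^{\beta_{j-1}}(y^\alpha/\alpha!))(p)=\delta_{\alpha\beta}$ whenever $|\alpha|=|\beta|$, because only the pure $\partial^\beta$-term survives at $p$; and (b) $(Y^\beta h_k)(p)=0$ whenever $|\beta|<k$, because then $Y^\beta h_k$ has minimum degree $k-|\beta|>0$. Setting $k_0=|\beta|$, the contributions $(Y^\beta h_k)(p)$ with $k>k_0$ vanish by (b), the contribution at $k=k_0$ equals $(Y^\beta f_j^{(k_0)})(p)$ by (a), and $(Y^\beta y_j)(p)-\sum_{k<k_0}(Y^\beta h_k)(p)$ reconstructs $(Y^\beta f_j^{(k_0)})(p)$ by the very definition $f_j^{(k_0)}=y_j-\sum_{q<k_0}h_q$. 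The two copies of $(Y^\beta f_j^{(k_0)})(p)$ cancel, giving $(Y^\beta z_j)(p)=0$ as required. The main technical obstacle is checking (a) and (b) carefully; once these degree-counting identities are in hand, the cancellation is automatic.
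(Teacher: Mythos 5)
Your proof is correct and follows essentially the same route as the paper's: both reduce privilegedness to the vanishing of the ordered derivatives $(Y_1^{\beta_1}\cdots Y_n^{\beta_n}z_j)(p)$ for $w(\beta)<w_j$ (the paper isolates this criterion as Lemma~\ref{le:cnsorder}, proved likewise via canonical coordinates of the second kind and Proposition~\ref{le:algorder}), and then verify the vanishing by the same telescoping cancellation between $(Y^\beta h_{|\beta|})(p)$ and $Y^\beta\bigl(y_j-\sum_{q<|\beta|}h_q\bigr)(p)$. The only cosmetic difference is that you extract the criterion directly from the iterated Taylor expansion in the second-kind coordinates, whereas the paper also invokes the order rules~\eqref{eq:ordcl} for one direction.
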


The proof of the lemma is based on the following result.

\begin{lemma}
\label{le:cnsorder}
A function $f$ is of order $\geq s$ at $p$ if and only if
$$
(Y_1^{\alpha_1} \cdots Y_n^{\alpha_n}f)(p)=0
$$
for all $\alpha$ such that $w(\alpha) <s$.
\end{lemma}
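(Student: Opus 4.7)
The plan is to leverage Lemma~\ref{ccan} and Proposition~\ref{le:algorder} to reduce the statement to an explicit identification of Taylor coefficients. First I would choose canonical coordinates of the second kind $(z_1, \dots, z_n)$ at $p$ adapted to the frame $(Y_1, \dots, Y_n)$, namely those obtained by inverting
$$\phi(t_1, \dots, t_n) = \exp(t_n Y_n) \circ \cdots \circ \exp(t_1 Y_1)(p).$$
By Lemma~\ref{ccan} these coordinates are privileged at $p$, and Proposition~\ref{le:algorder} then states that $\mathrm{ord}_p(f)$ is the least weighted degree of a monomial occurring with nonzero coefficient in the Taylor expansion of $f \circ \phi$ at $0$.

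The core computation would be to show that this Taylor expansion has the very explicit form
$$f \circ \phi (t) \sim \sum_{\alpha \in \N^n} \frac{t_1^{\alpha_1} \cdots t_n^{\alpha_n}}{\alpha_1!\, \cdots\, \alpha_n!} \bigl(Y_1^{\alpha_1} Y_2^{\alpha_2} \cdots Y_n^{\alpha_n} f\bigr)(p).$$
I would establish this by induction on $n$. The base case $n=1$ is exactly Lemma~\ref{le:dlexpY(t)}. For the inductive step, writing $\phi(t) = \exp(t_n Y_n)(q)$ with $q = \phi_{n-1}(t_1, \dots, t_{n-1}) = \exp(t_{n-1} Y_{n-1}) \circ \cdots \circ \exp(t_1 Y_1)(p)$, I would first apply Lemma~\ref{le:dlexpY(t)} in the variable $t_n$ (with $q$ frozen) to obtain a formal expansion of $f \circ \phi(t)$ as $\sum_{k_n} \frac{t_n^{k_n}}{k_n!}(Y_n^{k_n} f)(q)$. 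I would then apply the inductive hypothesis, with $n-1$ exponentials, to each smooth function $Y_n^{k_n} f$ in order to expand $(Y_n^{k_n} f)(q)$ as a Taylor series in $(t_1, \dots, t_{n-1})$ whose coefficients are $\frac{1}{\beta!}(Y_1^{\beta_1} \cdots Y_{n-1}^{\beta_{n-1}} Y_n^{k_n} f)(p)$. Collecting terms yields the announced formula.

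Once the formula is in hand, the conclusion is immediate: by Proposition~\ref{le:algorder} one has
$$\mathrm{ord}_p(f) = \min\bigl\{ w(\alpha) \ : \ (Y_1^{\alpha_1} \cdots Y_n^{\alpha_n} f)(p) \neq 0 \bigr\},$$
so $\mathrm{ord}_p(f) \geq s$ if and only if every $\alpha$ with $w(\alpha) < s$ satisfies $(Y_1^{\alpha_1} \cdots Y_n^{\alpha_n} f)(p)=0$, which is the content of Lemma~\ref{le:cnsorder}.

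The main obstacle I anticipate is a careful formulation of the inductive step: the expansion in $t_n$ delivered by Lemma~\ref{le:dlexpY(t)} has coefficients that themselves depend on $(t_1, \dots, t_{n-1})$ through the moving base point $q$, so one must verify that the two-stage formal expansion genuinely reproduces the Taylor series of the composite smooth function. This ultimately reduces to the fact that, for any smooth function of several variables, iterated partial derivatives at a point are computed by successive one-variable Taylor expansions, so no analytic subtlety arises beyond bookkeeping.
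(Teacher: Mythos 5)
Your proof is correct, but it takes a different route from the paper's. The paper argues the two implications separately: the forward direction is pure order calculus --- by~\eqref{eq:ordcl} the operator $Y_1^{\alpha_1}\cdots Y_n^{\alpha_n}$ has order $\geq -w(\alpha) > -s$, so applied to a function of order $\geq s$ it yields a function of positive order, which must vanish at $p$; the converse is proved contrapositively in canonical coordinates of the second kind by writing $Y_i = \sum_j Y_i^j(x)\,\partial_{x_j}$ with $\mathrm{ord}_p(Y_i^j) \geq w_j - w_i$ and $Y_i^j(0)=\delta_{ij}$, so that $Y_1^{\alpha_1}\cdots Y_n^{\alpha_n}$ agrees with $\partial_{x_1}^{\alpha_1}\cdots\partial_{x_n}^{\alpha_n}$ at $p$ modulo operators of strictly smaller weighted degree, whose contribution is killed by Proposition~\ref{le:algorder}. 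You instead prove the single exact identity
$$
\partial_{t_1}^{\alpha_1}\cdots\partial_{t_n}^{\alpha_n}\bigl(f\circ\phi\bigr)(0) \;=\; \bigl(Y_1^{\alpha_1}\cdots Y_n^{\alpha_n}f\bigr)(p),
$$
which is a legitimate multi-variable extension of the computation already carried out in the proof of Lemma~\ref{le:taylor_CH} (and your induction via Lemma~\ref{le:dlexpY(t)} is sound: since $f\circ\phi$ is smooth, mixed partials may be taken one variable at a time, starting with $t_n$). Combined with Lemma~\ref{ccan} and Proposition~\ref{le:algorder} this gives $\mathrm{ord}_p(f)=\min\{w(\alpha) : (Y_1^{\alpha_1}\cdots Y_n^{\alpha_n}f)(p)\neq 0\}$, from which both implications follow at once. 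Your ordering conventions are consistent ($Y_n$ acts first on $f$, matching $\exp(t_1Y_1)$ acting first on $p$), which is the one place such an argument typically goes wrong. What your approach buys is a stronger and more explicit statement (the exact Taylor coefficients of $f$ in these coordinates) and a unified treatment of both directions; what the paper's buys is a coordinate-free forward implication and a converse that never needs the full Taylor identity, only the leading-order comparison of the two differential operators.
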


\begin{proof}
Let $f$ be a function of order $\geq s$ at $p$. Using the rules~\eqref{eq:ordcl}, we have $\mathrm{ord}_p(Y_i)\geq -w_i$ for $i=1,\dots,n$, and hence  $\mathrm{ord}_p(Y_1^{\alpha_1} \cdots Y_n^{\alpha_n}) > -s$ for every $\alpha=(\alpha_1,\dots,\alpha_n)$ such that $w(\alpha) <s$. Consequently, for such an $\alpha$ the function $Y_1^{\alpha_1} \cdots Y_n^{\alpha_n}f$ is of positive order, and so vanishes at $p$.\medskip

Conversely, let $f$ be a function of order $< s$ at $p$. We introduce the canonical coordinates of the second kind $(x_1,\dots,x_n)$ defined by means of the adapted basis $Y_1,\dots,Y_n$. Proposition~\ref{le:algorder} implies that there exists $\alpha$ such that $w(\alpha)=\mathrm{ord}_p (f) <s$ and
$(\partial_{x_1}^{\alpha_1} \cdots \partial_{x_n}^{\alpha_n}f)(p)\neq 0$.
Moreover, every vector field $Y_i$, $i=1,\dots,n$, writes in coordinates $x$ as
$$
 \sum_{j=1}^n Y_i^j (x) \partial_{x_j}, \quad \hbox{where } \mathrm{ord}_p (Y_i^j) \geq w_j-w_i.
$$
There also holds $Y_i^j (0) = \delta_{ij}$ since $Y_i(p) = \partial_{x_i}$. As a consequence,
$$
Y_1^{\alpha_1} \cdots Y_n^{\alpha_n} (p) = \partial_{x_1}^{\alpha_1} \cdots \partial_{x_n}^{\alpha_n}(p) + \sum_{w(\beta)<w(\alpha)} a_\beta \partial_{x_1}^{\beta_1} \cdots \partial_{x_n}^{\beta_n}(p),
$$
and thus $(Y_1^{\alpha_1} \cdots Y_n^{\alpha_n}f)(p) = (\partial_{x_1}^{\alpha_1} \cdots \partial_{x_n}^{\alpha_n}f)(p)\neq 0$ since $w(\alpha)=\mathrm{ord}_p (f)$. This ends the proof.
\end{proof}

\begin{proof}[Proof of Lemma~\ref{le:coordalg}]
Let $i \in \{1,\dots,n\}$. Note first that $Y_iz_i(p)=Y_iy_i(p)=1$,
which implies  $\mathrm{ord}_p (z_i) \leq w_i$. It remains to show
that $\mathrm{ord}_p (z_i) \geq w_i$. For this we will use the
criterion of Lemma~\ref{le:cnsorder}.

Let $\alpha$ such that $w(\alpha) <w_i$ (and so $|\alpha|<w_i$). Using
the expression~\eqref{eq:coordalg} of $z_i$, we obtain
\begin{multline}
\label{eq:yalpha}
    Y_1^{\alpha_1} \cdots Y_n^{\alpha_n} z_i = Y_1^{\alpha_1} \cdots
    Y_n^{\alpha_n} \left(y_i - \sum_{k=2}^{w_i-1} h_k(y)\right) \\
    = Y_1^{\alpha_1} \cdots Y_n^{\alpha_n} \left(y_i -
      \sum_{k=2}^{|\alpha|-1} h_k(y)\right) -  Y_1^{\alpha_1} \cdots
    Y_n^{\alpha_n} \left(\sum_{k=|\alpha|}^{w_i-1} h_k(y)\right).
\end{multline}
The functions $h_k$ are given by
$$
h_k(y)=
\sum_{\stackrel{\scriptstyle{|\beta|=k}}{w(\beta)<w_i}} \!\!\!\!
Y_1^{\beta_1} \ldots Y_{i-1}^{\beta_{i-1}} \Big(
y_i- \sum_{q=2}^{k-1} h_q(y)\Big)(p) \
\frac{y_1^{\beta_1}}{\beta_1!} \cdots
\frac{y_{i-1}^{\beta_{i-1}}}{\beta_{i-1}!}.
$$
Therefore, we clearly have $\left(Y_1^{\alpha_1} \cdots Y_n^{\alpha_n}
  h_k\right)(p)=0$
if $k>|\alpha|$, and
$$
\left(Y_1^{\alpha_1} \cdots Y_n^{\alpha_n}  h_{|\alpha|} \right)(p)=
Y_1^{\alpha_1} \cdots Y_n^{\alpha_n} \left(y_i -
  \sum_{k=2}^{|\alpha|-1} h_k(y)\right) (p).
$$
Plugging this expression into~\eqref{eq:yalpha}, we obtain
$(Y_1^{\alpha_1} \cdots Y_n^{\alpha_n} z_i)(p)=0$, which ends the
proof.
\end{proof}

\addcontentsline{toc}{section}{
    \textbf{References}}


\end{document}